\documentclass[letterpaper, 10pt]{IEEEtran}

\usepackage{mathtools,dsfont,amsfonts}
\usepackage{enumitem}
\usepackage{subcaption}
\usepackage[mathscr]{euscript}

\usepackage[amsmath,thmmarks]{ntheorem}

\newtheorem{problem}{Problem}
\theorembodyfont{\upshape}
\newtheorem{assumption}{Assumption}

\usepackage[svgnames]{xcolor}
\usepackage{hyperref}
\hypersetup{
    colorlinks=true,
    linkcolor=blue,
    filecolor=magenta,      
    urlcolor=cyan,
    citecolor=Green,
    }
    
\usepackage{booktabs}
\usepackage{subcaption}
\usepackage{graphicx}
\usepackage[sort,compress]{cite}

\newcommand*\ALPHABET{\mathcal}
\newcommand*\PR{\mathds{P}}
\newcommand*\EXP{\mathds{E}}
\newcommand*\IND{\mathds{1}}
\newcommand*\reals{\mathds{R}}

\newcommand*\BELLMAN{\mathscr{B}}
\newcommand*\MISMATCH{\mathscr{D}}

\newcommand*\tuple{\kappa, w}

\newcommand\F{\mathfrak{F}}
\newcommand\FTV{\F_{\mathrm{TV}}}
\newcommand\dTV{d_{\mathrm{TV}}}
\newcommand\dTVw{d_{\mathrm{TV},w}}
\newcommand\FWas{\F_{\mathrm{Was}}}
\newcommand\dWas{d_{\mathrm{Was}}}
\newcommand\FTVw{\F_{\mathrm{TV},w}}
\newcommand\FWasw{\F_{\mathrm{Was},w}}

\DeclareMathOperator{\Lip}{Lip}
\DeclareMathOperator{\Span}{span}
\DeclareMathOperator{\Osc}{osc}

\newcommand*\scale{(\alpha_1,\alpha_2)}

\DeclareMathOperator{\Tr}{Tr}

\begin{document}
\title{Model approximation in MDPs with unbounded per-step cost}
\author{Berk Bozkurt, Aditya Mahajan, Ashutosh Nayyar, and Yi Ouyang%
\thanks{A preliminary version of this paper was presented at CDC 2023~\cite{Bozkurt2023}.}%
\thanks{Berk Bozkurt and Aditya Mahajan are with the Department of Electrical and Computer Engineering, McGill University, Montreal, QC, Canada. (email: berk.bozkurt@mail.mcgill.ca, aditya.mahajan@mcgill.ca)}%
\thanks{Ashutosh Nayyar is with the Department of Electrical and Computer Engineering, University of Southern California, Los Angeles, CA, USA. (email: ashutoshn@usc.edu)}%
\thanks{Yi Ouyang is with Preferred Networks America, Burlingame, CA, USA (email: ouyangyi@preferred-america.com)}%
\thanks{The work at McGill  was supported by IDEaS grant CFPMN2-30, NSERC grant RGPIN-2021-03511, and  IVADO MSc Excellence Fellowship. The work at USC was supported by NSF grants ECCS 2025732 and ECCS 1750041.}
}
\maketitle

\begin{abstract}
   We consider the problem of designing a control policy for an infinite-horizon discounted cost Markov decision process $\mathcal{M}$ when we only have access to an approximate model $\hat{\mathcal{M}}$. How well does an optimal policy $\hat{\pi}^{\star}$ of the approximate model perform when used in the original model $\mathcal{M}$? We answer this question by bounding a weighted norm of the difference between the value function of $\hat{\pi}^\star $ when used in $\mathcal{M}$ and the optimal value function of $\mathcal{M}$.  We then extend our results and obtain potentially tighter upper bounds by considering affine transformations of the per-step cost. We further provide upper bounds that explicitly depend on the weighted distance between cost functions and weighted distance between transition kernels of the original and approximate models. We present examples to illustrate our results.
\end{abstract}

\begin{IEEEkeywords}
   Markov decision processes, model approximation, Bellman operators, integral probability metrics.
\end{IEEEkeywords}

\section{Introduction}
We consider the problem of model approximation in Markov decision processes (MDPs), i.e., the problem of designing  an optimal controller for an MDP using an approximate model (e.g. designing gait controller of a robot using a simulation model).  Let $\ALPHABET M$ denote the true model of the system and let $\hat{\ALPHABET M}$ denote the approximate model. Suppose we solve the approximate model $\hat{\ALPHABET M}$ to identify a policy $\hat\pi^\star$ which is optimal for $\hat{\ALPHABET M}$. How well does $\hat\pi^\star$ perform in the original model $\ALPHABET M$?

Several variations of this question have been studied in the MDP literature. Perhaps the earliest work investigating this is that of Fox~\cite{fox1971finite}, who investigated approximating MDPs by a finite state approximation. In a series of papers, Whitt generalized these results to approximating general MDPs via state aggregation~\cite{whitt1978,whitt1979,whitt1980}. Similar results for state discretization were obtained in~\cite{bertsekas1975convergence,chow1991optimal}, state and action discretization in~\cite{dufour2012approximation} and for models with state dependent discounting in~\cite{haurie1986approximation}. A general framework to view model approximation using the lens of integral probability metrics was presented by M\"uller~\cite{muller1997}.  There have been considerable  advances on these ideas in recent years \cite{saldi2014asymptotic,saldi2017asymptotic,saldi2018finite}, including generalizations to partially observed models~\cite{kara2022near,AIS}. However, these approximation results are restricted to models with bounded per-step cost. 

A related question is that of continuity of optimal policy in model approximation. In particular, if $\{ \hat{\ALPHABET M_n} \}_{n\geq1}$ is a sequence of models that converge to $\ALPHABET M$ in some sense, do the corresponding optimal policies $\{\hat\pi_{n}^\star \}_{n\geq1}$, where $\hat\pi_n^{\star}$ is optimal for $\hat{\ALPHABET M_n}$, converge to an optimal policy for $\ALPHABET M$? One of the earliest work in this direction is that of Fox~\cite{fox1973discretizing}, who studied the continuity of state discretization procedures. Sufficient conditions for continuity of value function on model parameters were presented in~\cite{dutta1994parametric}. There are series of recent papers which significantly generalize these results, including characterizing conditions under which the optimal policy is continuous in model parameters~\cite{saldi2014asymptotic,saldi2016near,saldi2017asymptotic,saldi2019asymptotic,kara2019robustness,kara2020robustness,kara2022robustness}.

The question of model approximation is also relevant for learning optimal policies when the system model is unknown. Therefore, several notions related to model approximation have been studied in the reinforcement learning literature including
approximate homeomorphisms~\cite{ravindran2004approximate,van2020plannable},
bisimulation metrics~\cite{ferns2004metrics,ferns2011bisimulation,castro2009equivalence},
state abstraction~\cite{abel2016near},
and approximate latent state models~\cite{franccois2019overfitting,gelada2019deepmdp}.

The basic results of model approximation may be characterized as follows. Let $\ALPHABET M$ and $\hat{\ALPHABET M}$ be two MDP models with the same state space $\ALPHABET S$ and action space $\ALPHABET A$. Let $\hat \pi^\star \colon \ALPHABET S \to \ALPHABET A$ be an optimal policy for model $\hat {\ALPHABET M}$. Let $V^{\hat \pi^\star} \colon \ALPHABET S \to \reals$ denote the performance of policy $\hat \pi^\star$ in model $\ALPHABET M$ and let $V^\star \colon \ALPHABET S \to \reals$ denote the optimal value function of model $\ALPHABET M$. Most of the existing literature on model approximation provides bounds on $\| V^{\hat \pi^\star} - V^\star \|_{\infty} \coloneqq \sup_{s \in \ALPHABET S} \bigl| V^{\hat \pi^\star}(s) - V^\star(s) \bigr|$ in terms of the parameters of the models $\ALPHABET M$ and $\hat {\ALPHABET M}$. 

However, such bounds are not appropriate for models  with non-compact state spaces and unbounded per-step cost. To illustrate this limitation, consider the linear quadratic regulation (LQR) problem in which the objective is to minimize the infinite-horizon expected discounted total cost. Let $\ALPHABET M$ and $\hat {\ALPHABET M}$ be two such LQR models and $\hat \pi^\star$ be the optimal policy of $\hat {\ALPHABET M}$. It is well known that
\[
   V^\star(s) = s^\intercal P s + q
   \quad\text{and}\quad
   V^{\hat \pi^\star}(s) = s^\intercal P^{\hat \pi^\star} s + q^{\hat \pi^\star},
\]
where $s \in \reals^{n_s}$ is the state, $P$ is the solution of an appropriate Riccati equation, $P^{\hat \pi^\star}$ is a solution of an appropriate Lyapunov equation (which depends on the gain of policy $\hat \pi^\star$) and $q$ and $q^{\hat \pi^\star}$ are constants (where $q^{\hat \pi^\star}$ depends on $P^{\hat \pi^\star}$).  See Sec.~\ref{sec:LQR} for exact details.
For this model, and for many models with unbounded per-step cost, $\| V^\star - V^{\hat \pi^\star} \|_\infty = \infty$. Therefore, the approximation bounds on $\|V^\star - V^{\hat \pi^\star}\|_\infty$ provided by the existing literature will also evaluate to $\infty$ and, as a result, do not provide any insights into the quality of the approximation. 

The standard approach to deal with unbounded per-step cost is to use a weighted norm rather than a sup norm~\cite{muller1997,saldi2017asymptotic,hernandez2012discrete,hernandez2012further}.
However, in most of the existing literature a weighted norm is used to establish existence and uniqueness of a dynamic programming solution. As far as we are aware, the only paper which uses the weighted norm for model approximation in models with unbounded per-step cost is~\cite{saldi2017asymptotic}, where the authors establish sufficient conditions under which $\hat V^\star_n \to V^\star$ and $V^{\hat \pi^\star_n} \to V^\star$, where $\hat V^\star_n$ and $\hat \pi^\star_n$ are value function and optimal policy of a discretized model with grid cells of size less than $1/n$. However, they do not establish the approximation error when a specific approximate model is used. 

Our main contributions in this paper are as follows:
\begin{itemize}
    \item We provide upper bounds on the approximation error in terms of the weighted-norm:
    \[
        \| V^{\hat \pi^\star} - V^\star \|_w \coloneqq
        \sup_{s \in \ALPHABET S}
        \frac{ \bigl| V^{\hat \pi^\star}(s) - V^\star(s) \bigr|}{ w(s) },
    \]
    where $w \colon \ALPHABET S \to [1, \infty)$ is a weight function. 
    Our bounds are derived using a new functional, which we call the \emph{Bellman mismatch functional}.

    \item In the literature on the existence of dynamic programming solution for models with unbounded per-step cost, it is assumed that the weight function is such that the dynamics under all policies satisfies a Lyaponov stability-type condition~\cite{muller1997,saldi2017asymptotic,hernandez2012discrete,hernandez2012further}.
     In contrast, we assume that such a stability condition is satisfied for only a few policies, including the optimal policies of the original and the approximate model, and policies obtained by using optimal and approximate value functions as one-step look ahead value function. (See Assumptions~\ref{assm:stability}--\ref{ass:N2} for the precise definition). 

    \item We then extend our results and obtain potentially tighter upper bounds by considering affine transformations of the per-step cost. These transformations allow us to trade off between the mismatches in the dynamics with the mismatches in the per-step cost.
    
    \item We present examples to illustrate that for suitable choices of the weight functions and affine transformations our bounds are tighter than existing sup-norm bounds, even for models with bounded per-step cost. In addition, we revisit the LQR example mentioned previously and show that the weighted-norm approximation bounds provide meaningful approximation guarantees for such unbounded-cost models.    
    
    \item We provide further upper bounds that explicitly depend on the weighted distance between cost functions and weighted distance between transition kernels of the original and approximate models.
    In the special case when $w(s) \equiv 1$, our bounds recover the existing sup-norm bounds~\cite{muller1997,abel2016near,gelada2019deepmdp} 
\end{itemize}

\subsubsection*{Notation}
We use calligraphic letters to denote sets (e.g. $\ALPHABET S$), uppercase letters to denote random variables (e.g. $S$) and lowercase letters to denote their realizations (e.g. $s$). The space of probability measures on a set $\ALPHABET S$ is expressed by $\Delta(\ALPHABET S)$. 
Subscripts indicate time, so $S_t$ denotes a random variable at time~$t$. $S_{1:t}$ is a short hand notation for $(S_1, \dots, S_t)$. 

We use $\reals$ to denote the set of real numbers, $\mathds{Z}_{\ge 0}$ to denote the set of non-negative integers, $\PR(\cdot)$ to denote the probability of an event, $\EXP[\cdot]$ to denote expectation of a random variable, and $\IND\{\cdot\}$ to denote indicator of an event. For functions $v_1, v_2 \colon \ALPHABET S \to \reals$, the notation $v_1 \le v_2$ denotes that $v_1(s) \le v_2(s)$ for all $s \in \ALPHABET S$. 

\section{Preliminaries}
\subsection{Markov decision processes}
A discrete-time infinite-horizon discounted cost Markov decision process (MDP) is a tuple $\ALPHABET M = \langle \ALPHABET S, \ALPHABET A, P, c, \gamma \rangle$ where
\begin{itemize}
  \item $\ALPHABET S$ is the state space, which is assumed to be a Borel
    space. The state at time $t$ is denoted by $S_t \in \ALPHABET S$. 
  \item $\ALPHABET A$ is the action space, which is assumed to be a Borel
    space. The action at time~$t$ is denoted by $A_t \in \ALPHABET
    A$. 
  \item $P \colon \ALPHABET S \times \ALPHABET A \to \Delta(\ALPHABET S)$ is a
    controlled stochastic kernel, which specifies the system dynamics. In
    particular, for any time $t$ and any $s_{1:t} \in \ALPHABET S^t$, $a_{1:t}
    \in \ALPHABET A^t$ and any Borel set $B \subset \ALPHABET S$, we have
    \begin{align*}
      &\PR(S_{t+1} \in B \mid S_{1:t} = s_{1:t}, A_{1:t} = a_{1:t})
      \\
      &\quad =
      \PR(S_{t+1} \in B \mid S_t = s_t, A_t = a_t)
      \eqqcolon
      P(B \mid s_t, a_t).
    \end{align*}
  \item $c \colon \ALPHABET S \times \ALPHABET A \to \reals$ is the
    per-step cost function which is assumed to be measurable. We further assume that the per-step cost is  bounded from below, i.e., there exists a finite constant $c_{\min}$ such that $c(s,a) \ge c_{\min}$ for all $(s,a) \in \ALPHABET S \times \ALPHABET A$. 
  \item $\gamma \in (0,1)$ is the discount factor.
\end{itemize}

A  stochastic kernel $\pi \colon \ALPHABET S \to \Delta(\ALPHABET A)$ is called a
(time-homogeneous) policy. Let $\Pi$ denote the space of all time-homogeneous
(and possibly randomized) policies. The performance of any policy $\pi \in
\Pi$ starting from an initial state $s \in \ALPHABET S$ is given by 
\begin{equation}\label{eq:value}
  V^\pi(s) = \EXP^\pi \biggl[
    \sum_{t=1}^\infty \gamma^{t-1} c(S_t, A_t) \,\biggm|\, S_1 = s 
  \biggr]
\end{equation}
where $\EXP^{\pi}$ denotes the expectation with respect to the probability measure on all
system variables induced by the choice of policy~$\pi$. The function $V^\pi$
is called the \emph{value function} of policy~$\pi$. 

A policy $\pi^\star \in \Pi$ is called an \emph{optimal policy} if 
\begin{equation}\label{eq:opt}
  V^{\pi^\star}(s) \le V^\pi(s), 
  \quad \forall s \in \ALPHABET S, \forall \pi \in \Pi.
\end{equation}
Since we consider Borel state and action spaces with possibly unbounded (from above)
per-step cost function, an optimal policy is not guaranteed to exist.  If an optimal policy exists, its value function is called the \emph{optimal value function}. 
We focus on MDPs for which optimal value function exists and can be obtained via dynamic programming. We formally define this as dynamic programming solvability in the next section.

\subsection{Dynamic programming solvability}
Let $\ALPHABET V$ denote the space of measurable functions from $\ALPHABET
S \to [\frac{c_{\min}}{1-\gamma},\infty) \cup \{ + \infty \}$. 

\begin{definition}[Weighted norm]
    Given a weight function $w \colon \ALPHABET S \to [1, \infty)$, we define the weighted norm $\| \cdot \|_{w}$ on $\ALPHABET V$ as follows: for any $v \in \ALPHABET V$,
    \[
       \| v \|_w = \sup_{s \in \ALPHABET S}
       \frac{ |v(s)| }{ w(s) }.
    \]
        $\ALPHABET V_w = \bigl\{ v \in \ALPHABET V : \|v\|_w < \infty \bigr\}$
        and
        $\ALPHABET P_w = \bigl\{ p \in \Delta(\ALPHABET S) : \int w dp < \infty \bigr\}$. 
\end{definition}
It can be easily verified that $\| \cdot \|_w$ is a norm and that $\ALPHABET V_w$ is  a Banach space.

\begin{remark}\label{rem:sup-norm}
When the weight function $w(s) \equiv 1$, then the weighted norm $\| v \|_w$ is equal to the sup-norm $\| v \|_\infty \coloneqq \sup_{s \in \ALPHABET S} | v(s)|$. In this case, $\ALPHABET V_w$ is the subset of all  bounded functions in $\ALPHABET V$ and $\ALPHABET P_w$ is the set of all probability measures on~$\ALPHABET S$. 
\end{remark}

\begin{definition}[Bellman operators]
Define the following two operators:
\begin{itemize}
  \item For any $\pi \in \Pi$, define the \emph{Bellman operator} 
    $\BELLMAN^\pi \colon \ALPHABET V \to \ALPHABET V$ as follows: for any $v
    \in \ALPHABET V$, 
    \[
        [ \BELLMAN^\pi v](s) =
        c_\pi(s) + \gamma
        \int_{\ALPHABET S} v(s') P_\pi(ds'\mid s),
     \]
   where $c_\pi(s) = \int_{\ALPHABET A} c(s,a) \pi(da | s),$ and 
   $P_\pi(ds' | s) = \int_{\ALPHABET A} \pi(da | s) P(ds' | s,a)$.

   \item Define the \emph{Bellman optimality operator} 
   $\BELLMAN^\star$ as follows: for any $v \in \ALPHABET V$,
    \[
        [ \BELLMAN^\star v](s) =
        \inf_{a \in \ALPHABET A} \biggl\{c(s,a) + \gamma
        \int_{\ALPHABET S} v(s') P(ds'\mid s,a) \biggr\}.
     \]
\end{itemize}
\end{definition}
\begin{definition}[One-step greedy policy]
We say that a policy $\pi \in \Pi$ is \emph{one-step greedy} with respect to a value function $v \in \ALPHABET V$ if $\BELLMAN^{\pi} v = \BELLMAN^{\star} v$. We denote by $\ALPHABET G(v)$ the set of all one-step greedy policies with respect to $v$.
\end{definition}

\begin{definition}[Dynamic programming solvability]
  An MDP $\ALPHABET M$ is said to be \emph{dynamic programming solvable}
  (DP-solvable, for short) with respect to a weight function $w \colon \ALPHABET S \to [1, \infty)$ if the following conditions are satisfied:
  \begin{enumerate}
    \item 
    For any $v \in \ALPHABET V_w$, $\BELLMAN^\star v \in \ALPHABET V_w$.
   \item There exists a $V^\star \in \ALPHABET V_w$ such that for all $\pi \in \Pi$,
   \[
       V^\star(s) \le V^{\pi}(s),
       \quad \forall s \in \ALPHABET S
   \]
   with equality at all states for at least one $\pi \in \Pi$.
  \item  $V^\star$ is a fixed point of $\BELLMAN^\star$, i.e., it satisfies the dynamic programming equation
      \[
        V = \BELLMAN^\star V.
       \]
  \end{enumerate}
\end{definition}

MDPs with finite state and action spaces are always DP-solvable. For MDPs with general state and action spaces, there are several conditions in the literature which imply DP-solvability. See~\cite{hernandez2012discrete} for an overview.

\subsection{Weighted-norm stability}
\begin{definition}[$(\tuple)$ stability of a policy]\label{def:stability}
   Given an MDP $\ALPHABET M$ and a tuple $(\tuple)$, where 
   $\kappa$ is a positive constant with $\gamma \kappa < 1$ and $w$ is a function from $\ALPHABET S$ to $[1, \infty)$, we say
   a policy $\pi \in \Pi$ is $(\tuple)$ stable if
   \begin{equation}\label{eq:c-pi}
        \| c_\pi \|_w < \infty,
   \end{equation}
   where $c_\pi(s) = \int_{\ALPHABET A} c(s,a) \pi(da | s),$ and $\forall s \in \ALPHABET S$,
   \begin{equation}
      \int_{\ALPHABET S} w(s') P_\pi(ds' \mid s) 
      \le 
      \kappa w(s), 
      \label{eq:kappa}
   \end{equation}
   where $P_\pi(ds' | s) = \int_{\ALPHABET A} \pi(da | s) P(ds' | s,a)$.
\end{definition}
Let $\Pi_S(\tuple)$ denote the set of all $(\tuple)$-stable policies for MDP~$\ALPHABET M$. 
Note that depending on the choice of $(\tuple)$, the set $\Pi_S(\tuple)$ might be empty. 
\begin{remark}
As stated in Remark~\ref{rem:sup-norm}, when $w(s) \equiv 1$, the weighted norm is the same as the sup-norm. Further, with $w(s) \equiv 1$,   inequality \eqref{eq:kappa} of Definition \ref{def:stability} holds with $\kappa =1$ for any policy $\pi$. Thus, for the case of $w(s) \equiv 1$, $\Pi_S(1, w \equiv 1 )$ is the set of all policies $\pi$ for which $\| c_\pi \|_{\infty} < \infty$.
\end{remark}

For our model approximation results developed later, we will assume that certain policies are $(\tuple)$ stable. It is worthwhile to contrast the $(\tuple)$-stability of a policy with a stronger assumption that is typically imposed in the literature~\cite{muller1997,saldi2017asymptotic,hernandez2012discrete,hernandez2012further}. To make that comparison, we define the following (which is the same as \cite[Assumption~8.3.2]{hernandez2012further}):
\begin{definition}[$(\bar \kappa, \bar w)$ stability of the model]\label{def:model-stability}
   Given an MDP $\ALPHABET M$ and a tuple $(\bar \kappa, \bar w)$, where 
   $\bar \kappa$ is a positive constant with $\gamma \bar \kappa < 1$ and $\bar w$ is a function from $\ALPHABET S$ to $[1, \infty)$, we say that  $\ALPHABET M$ is $(\bar \kappa, \bar w)$ stable if there exists a $c_{\max} < \infty$ such that
   \begin{equation}\label{eq:c-pi-bar}
        \| c(\cdot, a) \|_{\bar w} \le c_{\max}, \quad \forall a \in \ALPHABET A
   \end{equation}
   and
   \begin{equation}
    \int_{\ALPHABET S} \bar w(s') P(ds' \mid s,a) 
    \le 
    \bar\kappa \bar w(s),
    \quad \forall s \in \ALPHABET S, \forall a \in \ALPHABET A.
    \label{eq:stab_act}  
   \end{equation}
\end{definition}

\begin{remark}\label{rem:pi-better}
It is shown in~\cite{hernandez2012further} that $(\bar \kappa, \bar w)$ stability of the model is sufficient for DP-solvability. The notion of $(\tuple)$ stability of a policy is weaker. In particular, $(\bar \kappa, \bar w)$ stability of the model implies that any (time-homogeneous) policy is also $(\bar \kappa, \bar w)$ stable. However, $( \kappa,  w)$ stability of a particular policy does not imply $( \kappa,  w)$ stability of the model.  For a given weight function, the smallest value of $\kappa$ that satisfies~\eqref{eq:kappa} is given by
\begin{equation}\label{eq:kappa-bound}
    \kappa_w = \sup_{s \in \ALPHABET S} \frac{\int_{\ALPHABET S} w(s') P_\pi(ds'|s) }{w(s)}
\end{equation}
while the smallest value of $\bar \kappa$ that satisfies equation~\eqref{eq:stab_act} is given by
\begin{equation}\label{eq:bar-kappa-bound}
    \bar \kappa_w = \sup_{s \in \ALPHABET S, a \in \ALPHABET A} \frac{\int_{\ALPHABET S} w(s') P(ds'|s,a) }{w(s)}.
\end{equation}
It is clear from the definitions that $\kappa_w \le \bar \kappa_w$.
We show via an example in Sec.~\ref{sec:pi_better} that using the weaker notion of $(\tuple)$ stability of a policy drastically increases the range of  possible choices of the weight function and leads to tighter approximation bounds.  
\end{remark}

\begin{lemma}\label{lem:bounded}
    Given an MDP $\ALPHABET M$ and a tuple $(\tuple)$, 
    for any policy $\pi \in \Pi_S(\tuple)$, we have the following:
    \begin{enumerate}
        \item If $v \in \ALPHABET V_w$, then $\BELLMAN^\pi v \in \ALPHABET V_w$.
        \item $\BELLMAN^\pi$ is a $\|\cdot\|_w$-norm contraction with contraction factor $\gamma \kappa$, i.e., for any  $v_1, v_2 \in  \ALPHABET V_w$, 
        we have
      \[
         \| \BELLMAN^\pi v_1 - \BELLMAN^\pi v_2 \|_w 
         \le
         \gamma \kappa \| v_1 - v_2 \|_w.
      \]
      \item The fixed point equation
      \[
           V = \BELLMAN^\pi V
      \]
      has a unique solution in $\ALPHABET V_w$ and that solution is equal to $V^\pi$.
    \end{enumerate}
\end{lemma}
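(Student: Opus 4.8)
The plan is to prove the three parts in order, since each builds on the previous one. For part (1), suppose $v \in \ALPHABET V_w$. Unpacking the definition, $[\BELLMAN^\pi v](s) = c_\pi(s) + \gamma \int_{\ALPHABET S} v(s') P_\pi(ds' \mid s)$. I would bound this pointwise: $|[\BELLMAN^\pi v](s)| \le |c_\pi(s)| + \gamma \int_{\ALPHABET S} |v(s')| P_\pi(ds' \mid s) \le |c_\pi(s)| + \gamma \|v\|_w \int_{\ALPHABET S} w(s') P_\pi(ds' \mid s)$, where the last step uses $|v(s')| \le \|v\|_w\, w(s')$. Now apply \eqref{eq:c-pi} to control the first term ($|c_\pi(s)| \le \|c_\pi\|_w\, w(s)$) and apply the contraction-type inequality \eqref{eq:kappa} to get $\int w\, dP_\pi(\cdot \mid s) \le \kappa w(s)$. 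Dividing by $w(s)$ and taking the supremum over $s$ yields $\|\BELLMAN^\pi v\|_w \le \|c_\pi\|_w + \gamma \kappa \|v\|_w < \infty$. I also need a cheap check that $\BELLMAN^\pi v$ lands in $\ALPHABET V$, i.e.\ that it is measurable (standard, since $c_\pi$ and the kernel integral are measurable) and bounded below by $c_{\min}/(1-\gamma)$; the latter follows by iterating $c_\pi(s) \ge c_{\min}$ and $\gamma \int w\, dP_\pi \ge 0$, or more simply is inherited once we know $\BELLMAN^\pi V^\pi = V^\pi \ge c_{\min}/(1-\gamma)$ — but to keep part (1) self-contained I would just note $[\BELLMAN^\pi v](s) \ge c_{\min} + \gamma \cdot \frac{c_{\min}}{1-\gamma} = \frac{c_{\min}}{1-\gamma}$ using $v \ge \frac{c_{\min}}{1-\gamma}$.

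For part (2), take $v_1, v_2 \in \ALPHABET V_w$. The cost terms cancel, so $[\BELLMAN^\pi v_1](s) - [\BELLMAN^\pi v_2](s) = \gamma \int_{\ALPHABET S} \bigl(v_1(s') - v_2(s')\bigr) P_\pi(ds' \mid s)$. Bound the absolute value by $\gamma \int_{\ALPHABET S} |v_1(s') - v_2(s')|\, P_\pi(ds' \mid s) \le \gamma \|v_1 - v_2\|_w \int_{\ALPHABET S} w(s')\, P_\pi(ds' \mid s) \le \gamma \kappa \|v_1 - v_2\|_w\, w(s)$, again using \eqref{eq:kappa}. Divide by $w(s)$, take the supremum, and since $\gamma \kappa < 1$ this is a genuine contraction on the Banach space $\ALPHABET V_w$.

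For part (3), the Banach fixed point theorem applied to the contraction $\BELLMAN^\pi \colon \ALPHABET V_w \to \ALPHABET V_w$ (mapping into itself by part (1), contractive by part (2)) gives a unique fixed point $\bar V \in \ALPHABET V_w$. It remains to identify $\bar V$ with $V^\pi$. I would do this in two sub-steps. First, iterate the fixed point equation: $\bar V = (\BELLMAN^\pi)^n \bar V$, and expanding, $\bar V(s) = \EXP^\pi\bigl[\sum_{t=1}^{n} \gamma^{t-1} c(S_t, A_t) \mid S_1 = s\bigr] + \gamma^n \EXP^\pi[\bar V(S_{n+1}) \mid S_1 = s]$. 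The remainder term satisfies $|\gamma^n \EXP^\pi[\bar V(S_{n+1}) \mid S_1 = s]| \le \gamma^n \|\bar V\|_w \EXP^\pi[w(S_{n+1}) \mid S_1 = s] \le \gamma^n \|\bar V\|_w\, \kappa^n w(s) = (\gamma\kappa)^n \|\bar V\|_w\, w(s)$, where I iterate \eqref{eq:kappa} $n$ times to get $\EXP^\pi[w(S_{n+1}) \mid S_1 = s] \le \kappa^n w(s)$. Since $\gamma\kappa < 1$, this vanishes as $n \to \infty$, and the partial sums converge (monotone convergence, as $c - c_{\min} \ge 0$, or dominated convergence) to $V^\pi(s)$ as defined in \eqref{eq:value}. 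Hence $\bar V = V^\pi$, which in passing also shows $V^\pi \in \ALPHABET V_w$.

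The main obstacle is the identification of the fixed point with $V^\pi$ in part (3): one must justify the interchange of limit, expectation, and infinite sum, and show the tail term $\gamma^n \EXP^\pi[\bar V(S_{n+1}) \mid S_1 = s]$ vanishes — this is exactly where the stability condition \eqref{eq:kappa}, iterated $n$ times to bound $\EXP^\pi[w(S_{n+1}) \mid S_1 = s] \le \kappa^n w(s)$, does the essential work and is the reason $(\tuple)$-stability is the right hypothesis. Parts (1) and (2) are routine norm estimates.
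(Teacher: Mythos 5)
Your proposal is correct and follows essentially the same route as the paper: parts (1) and (2) are the identical weighted-norm estimates using \eqref{eq:c-pi} and \eqref{eq:kappa}, and part (3) invokes the Banach fixed point theorem and then identifies the fixed point with $V^\pi$ via monotone convergence of the partial sums. The only cosmetic difference is in the identification step: the paper lets the Banach iterates $V^\pi_n = (\BELLMAN^\pi)^n V^\pi_0$ converge to the fixed point in $\|\cdot\|_w$-norm and separately to $V^\pi$ pointwise, whereas you expand $(\BELLMAN^\pi)^n \bar V$ and kill the tail term with the iterated bound $\EXP^\pi[w(S_{n+1})\mid S_1=s]\le \kappa^n w(s)$ --- the same estimate that underlies the Banach convergence, so the two arguments coincide in substance.
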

See Appendix~\ref{app:bounded} for proof.

\section{Problem formulation and approximation bounds}\label{sec:main_results}
\subsection{Model approximation in MDPs}
We are interested in the problem of model approximation in MDPs. In particular, suppose there is an MDP $\ALPHABET M = \langle \ALPHABET S, \ALPHABET A, P, c, \gamma \rangle$ of interest, but the system designer has access to only an approximate model $\hat {\ALPHABET M} = \langle \ALPHABET S, \ALPHABET A, \hat P, \hat c, \gamma \rangle$. Note that both models $\ALPHABET M$ and $\hat {\ALPHABET M}$ have the same state and action spaces, but have different transition dynamics and per-step cost. As before, we assume that both $c$ and $\hat c$ are  bounded from below. Thus, there exists a finite constant $c_{\min}$ such that $c(s,a) \ge c_{\min}$ and $\hat c(s,a) \ge c_{\min}$ for all $(s,a) \in \ALPHABET S \times \ALPHABET A$. 

We further assume that both models $\ALPHABET M$ and $\hat {\ALPHABET M}$ are well-behaved in the following sense, which we assume to hold in the rest of the paper.
\begin{assumption}\label{assm:DP-solvability}
    Models $\ALPHABET M$ and $\hat {\ALPHABET M}$ are DP-solvable. 
\end{assumption}

\begin{table}[!t]
    \centering
    \caption{Notation for the variables used for the two models}
    \label{tab:notation}
    \begin{tabular}{@{}lcc@{}}
        \toprule
        Variable & Model $\ALPHABET M$ & Model $\hat {\ALPHABET M}$ \\
        \midrule
        Dynamics & $P$ & $\hat P$ \\
        per-step cost & $c$ & $\hat c$ \\
        Value function of policy $\pi$ & $V^\pi$ & $\hat V^{\pi}$ \\
        Optimal value function & $V^\star$ & $\hat V^\star$ \\
        Optimal policy & $\pi^\star$ & $\hat \pi^\star$ \\
        Bellman operator of policy $\pi$ & $\BELLMAN^\pi$ & $\hat \BELLMAN^\pi$ \\
        Bellman optimality operator & $\BELLMAN^\star$ & $\hat \BELLMAN^\star$ \\
        Set of one-step greedy policies w.r.t.\ $v$ & $\ALPHABET G(v)$ & $\hat {\ALPHABET G}(v)$ \\
        Set of $(\tuple)$-stable policies & $\Pi_S(\tuple)$ & $\hat \Pi_S(\tuple)$ \\
        \bottomrule
    \end{tabular}
\end{table}

We will use the superscript $\hat{}$ (hat) to denote variables/operators corresponding to the approximate model, as summarized in Table~\ref{tab:notation}. We are interested in the following approximation problem.

\begin{problem}\label{prob:main}
  Let $\hat \pi^\star$ be an optimal policy for the approximate model $\hat {\ALPHABET M}$. For each start state $s$, provide a bound for the loss in performance when using $\hat \pi^\star$ in the original model $\ALPHABET M$, i.e., bound $V^{\hat \pi^\star}(s) - V^\star(s)$.
\end{problem}

\subsection{Approximation bounds}

In the rest of the paper, we will work with a fixed $(\tuple)$  where $\kappa$ is a non-negative constant such that $\gamma \kappa < 1$ and $w \colon \ALPHABET S \to [1, \infty)$.  
Note that $\Pi_S(\tuple)$ and $\hat \Pi_S(\tuple)$ denote the sets of $(\tuple)$-stable policies for models $\ALPHABET M$ and $\hat {\ALPHABET M}$, respectively. Also, $\ALPHABET G(v)$ and $\hat{\ALPHABET G}(v)$ denote the sets of one-step greedy polices with respect to $v$ for models $\ALPHABET M$ and $\hat {\ALPHABET M}$, respectively.
We impose the following additional assumption on the models.

\begin{assumption}\label{assm:stability}
We assume that
\begin{enumerate}
    \item The set $\ALPHABET G(V^\star) \cap \Pi_S(\tuple)$ is nonempty.
    \item The set $\hat {\ALPHABET G}(\hat V^\star)\cap \Pi_S(\tuple) \cap \hat \Pi_S(\tuple)$ is nonempty.
\end{enumerate}
\end{assumption}
When $\ALPHABET G(V^\star) \cap \Pi_S(\tuple) \neq \emptyset$, we can show that any policy $\pi \in \ALPHABET G(V^\star) \cap \Pi_S(\tuple)$ is optimal, i.e., $V^{\pi} = V^\star$. From now on, we assume that the optimal policy $\pi^\star$ for Model $\ALPHABET M$ belongs to $\ALPHABET G(V^\star) \cap \Pi_S(\tuple)$. Similarly, we assume that the optimal policy $\hat \pi^\star$ for Model $\hat {\ALPHABET M}$ belongs to $\hat {\ALPHABET G}(\hat V^\star)\cap \Pi_S(\tuple) \cap \hat \Pi_S(\tuple)$.

For some of the results, we impose one of the following assumptions:
\begin{assumption}\label{ass:N1}
    The set $\ALPHABET G(\hat V^\star) \cap  \Pi_S(\tuple)$ is nonempty.
\end{assumption}
\begin{assumption}\label{ass:N2}
    The set $\hat {\ALPHABET G}(V^\star) \cap  \hat \Pi_S(\tuple)$ is nonempty.
\end{assumption}

Assumption~\ref{ass:N1} effectively states using $\hat V^\star$ as the one-step look ahead value function in the original model produces a stable policy.  Similarly, Assumption~\ref{ass:N2} effectively states that using $V^\star$ as the one-step look ahead value function in the approximate model produces a stable policy. 

\begin{definition}[Bellman mismatch functionals] 
\label{def:bellman_mismatch}
Suppose Assumptions \ref{assm:DP-solvability} and \ref{assm:stability} hold. Define the following functionals:
\begin{itemize}
    \item  For any $\pi \in \Pi_S(\tuple)$ and $\hat\pi \in \hat \Pi_S(\tuple)$, define the \emph{Bellman mismatch functional}  $\MISMATCH_w^{\pi,\hat\pi} \colon \ALPHABET V_w \to \reals_{\ge 0}$  as follows: for any $v \in \ALPHABET V_w$,
    \[
        \MISMATCH^{\pi,\hat\pi}_w v = \| \BELLMAN^{\pi} v - \hat{\BELLMAN}^{\hat \pi} v \|_w.
    \]

    \item For any $\pi \in \Pi_S(\tuple) \cap \hat \Pi_S(\tuple)$, define the \emph{Bellman mismatch functional} $\MISMATCH^\pi_w \colon \ALPHABET V_w \to \reals_{\ge 0}$ as follows: for any $v \in \ALPHABET V_w$, 
    \[
        \MISMATCH_w^{\pi}v = 
        \MISMATCH_w^{\pi,\pi}v
        =
        \| \BELLMAN^{\pi}v - \hat{\BELLMAN}^{\pi}v \|_w.
    \]

    \item Define the \emph{Bellman optimality mismatch functional} $\MISMATCH^\star_w \colon \ALPHABET V_w \to \reals_{\ge 0}$  as follows: for any $v \in \ALPHABET V_w$, 
    \[
        \MISMATCH_w^{\star}v = \| \BELLMAN^{\star} v - \hat{\BELLMAN}^{\star} v \|_w.
    \]
\end{itemize}
\end{definition}
In the rest of the paper, we assume that Assumption~\ref{assm:stability} holds and $w$ is fixed. Therefore, we omit the subscript $w$ in the mismatch functionals  in the rest of the discussion.

\begin{lemma}[Policy error bounds]\label{lem:policy-error}
For any two policies $\pi \in \Pi_S(\tuple)$ and $\hat\pi \in \hat \Pi_S(\tuple)$,  we have
    \begin{equation}\label{eq:DeltaVpi}
        \| V^\pi - \hat V^{\hat\pi} \|_w \le \frac{1}{1-\gamma \kappa} 
        \min \bigl\{ \MISMATCH^{\pi,\hat\pi} V^\pi, \MISMATCH^{\pi,\hat\pi} \hat V^{\hat\pi} \bigr\}.
    \end{equation}
\end{lemma}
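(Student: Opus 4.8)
The plan is to run the standard "fixed-point vs.\ fixed-point" contraction argument twice, once pivoting through $V^\pi$ and once through $\hat V^{\hat\pi}$, and then take the minimum. First I would record the ingredients supplied by Lemma~\ref{lem:bounded}: since $\pi \in \Pi_S(\tuple)$, the operator $\BELLMAN^\pi$ maps $\ALPHABET V_w$ into itself, is a $\gamma\kappa$-contraction in $\|\cdot\|_w$, and has unique fixed point $V^\pi \in \ALPHABET V_w$; applying the same lemma to the model $\hat{\ALPHABET M}$ and the policy $\hat\pi \in \hat\Pi_S(\tuple)$ gives that $\hat\BELLMAN^{\hat\pi}$ is a $\gamma\kappa$-contraction on $\ALPHABET V_w$ with unique fixed point $\hat V^{\hat\pi}\in\ALPHABET V_w$. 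In particular $V^\pi$ and $\hat V^{\hat\pi}$ lie in $\ALPHABET V_w$, so the quantities $\MISMATCH^{\pi,\hat\pi}V^\pi$ and $\MISMATCH^{\pi,\hat\pi}\hat V^{\hat\pi}$ are well defined, and $\|V^\pi-\hat V^{\hat\pi}\|_w \le \|V^\pi\|_w + \|\hat V^{\hat\pi}\|_w < \infty$.

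For the first bound, write $V^\pi = \BELLMAN^\pi V^\pi$ and $\hat V^{\hat\pi} = \hat\BELLMAN^{\hat\pi}\hat V^{\hat\pi}$, insert $\hat\BELLMAN^{\hat\pi}V^\pi$, and use the triangle inequality:
\[
 \|V^\pi - \hat V^{\hat\pi}\|_w
 \le \|\BELLMAN^\pi V^\pi - \hat\BELLMAN^{\hat\pi}V^\pi\|_w
   + \|\hat\BELLMAN^{\hat\pi}V^\pi - \hat\BELLMAN^{\hat\pi}\hat V^{\hat\pi}\|_w
 \le \MISMATCH^{\pi,\hat\pi}V^\pi + \gamma\kappa\,\|V^\pi - \hat V^{\hat\pi}\|_w,
\]
where the first term is exactly $\MISMATCH^{\pi,\hat\pi}V^\pi$ by definition and the second uses that $\hat\BELLMAN^{\hat\pi}$ is a $\gamma\kappa$-contraction. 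Since the left-hand side is finite, rearranging yields $\|V^\pi-\hat V^{\hat\pi}\|_w \le \tfrac{1}{1-\gamma\kappa}\MISMATCH^{\pi,\hat\pi}V^\pi$.

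For the second bound I would symmetrically insert $\BELLMAN^\pi \hat V^{\hat\pi}$ instead, obtaining
\[
 \|V^\pi - \hat V^{\hat\pi}\|_w
 \le \|\BELLMAN^\pi V^\pi - \BELLMAN^\pi \hat V^{\hat\pi}\|_w
   + \|\BELLMAN^\pi \hat V^{\hat\pi} - \hat\BELLMAN^{\hat\pi}\hat V^{\hat\pi}\|_w
 \le \gamma\kappa\,\|V^\pi - \hat V^{\hat\pi}\|_w + \MISMATCH^{\pi,\hat\pi}\hat V^{\hat\pi},
\]
now using the contraction property of $\BELLMAN^\pi$; rearranging gives $\|V^\pi-\hat V^{\hat\pi}\|_w \le \tfrac{1}{1-\gamma\kappa}\MISMATCH^{\pi,\hat\pi}\hat V^{\hat\pi}$. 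Combining the two displays gives \eqref{eq:DeltaVpi}. This is essentially a routine Banach-fixed-point estimate, so I do not anticipate a genuine obstacle; the only points requiring care are (i) confirming finiteness of $\|V^\pi-\hat V^{\hat\pi}\|_w$ before rearranging the inequality (so that subtracting $\gamma\kappa\|V^\pi-\hat V^{\hat\pi}\|_w$ from both sides is legitimate), which follows from $V^\pi,\hat V^{\hat\pi}\in\ALPHABET V_w$, and (ii) using the correct contraction in each of the two chains — $\hat\BELLMAN^{\hat\pi}$ in the first and $\BELLMAN^\pi$ in the second — which is what makes both mismatch terms reduce to the single functional $\MISMATCH^{\pi,\hat\pi}$ evaluated at the respective fixed point.
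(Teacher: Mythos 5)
Your proposal is correct and is essentially identical to the paper's proof: both bounds are obtained by exactly the same triangle-inequality decompositions (inserting $\hat\BELLMAN^{\hat\pi}V^\pi$ for the first and $\BELLMAN^\pi \hat V^{\hat\pi}$ for the second), invoking Lemma~\ref{lem:bounded} for the $\gamma\kappa$-contraction of the appropriate operator, and rearranging. Your explicit remark on the finiteness of $\|V^\pi-\hat V^{\hat\pi}\|_w$ before rearranging is a minor point the paper leaves implicit, but it does not change the argument.
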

See Appendix~\ref{app:policy-error} for proof.

\begin{lemma}[Value error bounds]\label{lem:value-error}
    The following hold:
    \begin{enumerate}
        \item
         If Assumptions~\ref{assm:DP-solvability} and \ref{assm:stability} hold, we have
        \begin{equation}\label{eq:N-1}
            \| V^\star - \hat V^\star \|_w \le \frac 1{(1 - \gamma \kappa)} \MISMATCH^{\pi^\star, \hat \pi^\star} \hat V^\star.
        \end{equation}
        and 
        \begin{equation}\label{eq:N0}
            \| V^\star - \hat V^\star \|_w \le \frac 1{(1 - \gamma \kappa)} \MISMATCH^{\pi^\star, \hat \pi^\star} V^\star.
        \end{equation}
        \item If Assumptions~\ref{assm:DP-solvability}, \ref{assm:stability} and \ref{ass:N1}  hold, we have
        \begin{equation}\label{eq:N1}
            \| V^\star - \hat V^\star \|_w \le \frac 1{(1 - \gamma \kappa)} \MISMATCH^\star \hat V^\star.
        \end{equation}
        \item If Assumptions~\ref{assm:DP-solvability}, \ref{assm:stability} and \ref{ass:N2}  hold, we have
            \begin{equation}\label{eq:N2}
                \| V^\star - \hat V^\star \|_w \le \frac 1{(1 - \gamma \kappa)} \MISMATCH^\star V^\star.
            \end{equation}
    \end{enumerate}
\end{lemma}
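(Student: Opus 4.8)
The plan is to handle the four inequalities in three groups, from the ones that are essentially corollaries of Lemma~\ref{lem:policy-error} to the ones that need a dedicated argument.

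The bounds \eqref{eq:N-1} and \eqref{eq:N0} follow immediately from Lemma~\ref{lem:policy-error} applied to the pair $(\pi^\star,\hat\pi^\star)$. By Assumption~\ref{assm:stability} we have $\pi^\star\in\Pi_S(\tuple)$ and $\hat\pi^\star\in\hat\Pi_S(\tuple)$, so \eqref{eq:DeltaVpi} holds for this pair; moreover, since $\pi^\star\in\ALPHABET G(V^\star)\cap\Pi_S(\tuple)$ is optimal for $\ALPHABET M$ we have $V^{\pi^\star}=V^\star$, and likewise $\hat V^{\hat\pi^\star}=\hat V^\star$. Substituting these identities into \eqref{eq:DeltaVpi} yields $\|V^\star-\hat V^\star\|_w\le\frac{1}{1-\gamma\kappa}\min\{\MISMATCH^{\pi^\star,\hat\pi^\star}V^\star,\ \MISMATCH^{\pi^\star,\hat\pi^\star}\hat V^\star\}$, which contains both \eqref{eq:N-1} and \eqref{eq:N0}.

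For \eqref{eq:N1} the key step is a ``local contraction'' property of the Bellman optimality operator: under Assumptions~\ref{assm:stability} and~\ref{ass:N1},
\[
  \| \BELLMAN^\star V^\star - \BELLMAN^\star \hat V^\star \|_w \le \gamma\kappa\, \| V^\star - \hat V^\star \|_w .
\]
To prove this, fix a state $s$ and split on the sign of $[\BELLMAN^\star V^\star](s)-[\BELLMAN^\star\hat V^\star](s)$. When $[\BELLMAN^\star\hat V^\star](s)$ is the larger, I would bound the difference from above using $[\BELLMAN^\star\hat V^\star](s)\le[\BELLMAN^{\pi^\star}\hat V^\star](s)$ (valid because $\BELLMAN^\star$ is an infimum over actions) together with the identity $[\BELLMAN^\star V^\star](s)=[\BELLMAN^{\pi^\star}V^\star](s)$ (valid because $\pi^\star\in\ALPHABET G(V^\star)$): the difference then reduces to $\gamma\int(\hat V^\star-V^\star)\,P_{\pi^\star}(ds'\mid s)$, which is at most $\gamma\kappa\|V^\star-\hat V^\star\|_w\,w(s)$ since $\pi^\star\in\Pi_S(\tuple)$. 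In the opposite case I would do the symmetric computation with a policy $\tilde\pi\in\ALPHABET G(\hat V^\star)\cap\Pi_S(\tuple)$, which exists by Assumption~\ref{ass:N1}. Taking the supremum over $s$ gives the display. Then, since $V^\star=\BELLMAN^\star V^\star$ and $\hat V^\star=\hat\BELLMAN^\star\hat V^\star$ by DP-solvability, the triangle inequality gives $\|V^\star-\hat V^\star\|_w=\|\BELLMAN^\star V^\star-\hat\BELLMAN^\star\hat V^\star\|_w\le\|\BELLMAN^\star V^\star-\BELLMAN^\star\hat V^\star\|_w+\|\BELLMAN^\star\hat V^\star-\hat\BELLMAN^\star\hat V^\star\|_w\le\gamma\kappa\|V^\star-\hat V^\star\|_w+\MISMATCH^\star\hat V^\star$, and rearranging (legitimate since $V^\star,\hat V^\star\in\ALPHABET V_w$ implies $\|V^\star-\hat V^\star\|_w<\infty$) proves \eqref{eq:N1}. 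The bound \eqref{eq:N2} is the mirror image: under Assumptions~\ref{assm:stability} and~\ref{ass:N2} one establishes $\|\hat\BELLMAN^\star V^\star-\hat\BELLMAN^\star\hat V^\star\|_w\le\gamma\kappa\|V^\star-\hat V^\star\|_w$ by the same two-case analysis, now using that $\hat\pi^\star\in\hat{\ALPHABET G}(\hat V^\star)\cap\hat\Pi_S(\tuple)$ attains the infimum in $\hat\BELLMAN^\star\hat V^\star$ and a policy in $\hat{\ALPHABET G}(V^\star)\cap\hat\Pi_S(\tuple)$ (from Assumption~\ref{ass:N2}) attains the infimum in $\hat\BELLMAN^\star V^\star$, and then combining with $V^\star=\BELLMAN^\star V^\star$, $\hat V^\star=\hat\BELLMAN^\star\hat V^\star$ and the triangle inequality as above.

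I expect the local-contraction step to be the main obstacle. Because only a handful of policies, rather than the whole model, are assumed $(\tuple)$-stable, $\BELLMAN^\star$ need not be a $\|\cdot\|_w$-contraction on all of $\ALPHABET V_w$, so the Banach fixed-point estimate available in the bounded-cost setting cannot be invoked; one must restrict attention to the single pair $(V^\star,\hat V^\star)$ and use the greedy policies at exactly these two value functions — which is precisely what Assumptions~\ref{assm:stability}, \ref{ass:N1}, and~\ref{ass:N2} are tailored to supply. A secondary point is to check that the infima defining $\BELLMAN^\star$ and $\hat\BELLMAN^\star$ are attained (in the sense $\BELLMAN^\pi v=\BELLMAN^\star v$) by the relevant greedy policies, which is guaranteed by nonemptiness of the corresponding $\ALPHABET G(\cdot)$ and $\hat{\ALPHABET G}(\cdot)$ sets, so no $\varepsilon$-optimal selection argument is needed.
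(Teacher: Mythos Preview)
Your proof is correct and essentially the same as the paper's. For part~1 you invoke Lemma~\ref{lem:policy-error} directly rather than repeating its computation; for parts~2 and~3 you package the two one-sided estimates into a single local-contraction inequality $\|\BELLMAN^\star V^\star-\BELLMAN^\star\hat V^\star\|_w\le\gamma\kappa\|V^\star-\hat V^\star\|_w$ and then apply the triangle inequality, whereas the paper works with the one-sided suprema $[v]_w:=\sup_s v(s)/w(s)$ throughout --- but the underlying case analysis (using $\pi^\star$ in one direction and a policy from $\ALPHABET G(\hat V^\star)\cap\Pi_S(\tuple)$ supplied by Assumption~\ref{ass:N1} in the other) is identical.
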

See Appendix~\ref{app:value-error} for proof.

We can establish the following theorem by combining policy and value error bounds.
\begin{theorem}\label{thm:bound}
   We have the following bounds on $V^{\hat \pi^\star} - V^\star$:
   \begin{enumerate}
   \item Under Assumptions~\ref{assm:DP-solvability} and \ref{assm:stability}, we have
       \begin{align*}
          \bigl\| V^{\hat \pi^\star} - V^\star \bigr\|_w
          &\leq
          \frac{1}{1 - \gamma \kappa }
          \bigl[ \MISMATCH^{\hat \pi^\star} \hat V^\star
          + \MISMATCH^{\pi^\star, \hat \pi^\star} \hat V^\star \bigr]
       \end{align*}
       and
    \begin{align*}
      \bigl\| V^{\hat \pi^\star} - V^\star \bigr\|_w
      &\le
      \frac{1}{1 - \gamma \kappa }
      \MISMATCH^{\hat \pi^\star} V^\star 
      +
      \frac{(1 + \gamma \kappa)}{(1 - \gamma \kappa)^2 }
      \MISMATCH^{\pi^\star, \hat \pi^\star} V^\star. 
   \end{align*}
    \item Under Assumptions~\ref{assm:DP-solvability}, \ref{assm:stability}, and ~\ref{ass:N1}, we have
       \begin{align*}
          \bigl\| V^{\hat \pi^\star} - V^\star \bigr\|_w
          &\leq
          \frac{1}{1 - \gamma \kappa }
          \bigl[ \MISMATCH^{\hat \pi^\star} \hat V^\star
          + \MISMATCH^\star \hat V^\star \bigr].
       \end{align*}
    \item Under Assumptions~\ref{assm:DP-solvability}, \ref{assm:stability}, and ~\ref{ass:N2}, we have
       \begin{align*}
          \bigl\| V^{\hat \pi^\star} - V^\star \bigr\|_w
          &\le
          \frac{1}{1 - \gamma \kappa }
          \MISMATCH^{\hat \pi^\star} V^\star 
          +
          \frac{(1 + \gamma \kappa)}{(1 - \gamma \kappa)^2 }
          \MISMATCH^{\star} V^\star. 
       \end{align*}
   \end{enumerate}
\end{theorem}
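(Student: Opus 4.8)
The plan is to combine the policy error bound of Lemma~\ref{lem:policy-error} with the value error bounds of Lemma~\ref{lem:value-error} through a triangle inequality centered at $\hat V^\star$. Since $\hat\pi^\star$ is optimal for $\hat{\ALPHABET M}$ we have $\hat V^{\hat\pi^\star} = \hat V^\star$, whence
\[
   \bigl\| V^{\hat\pi^\star} - V^\star \bigr\|_w
   \le \bigl\| V^{\hat\pi^\star} - \hat V^{\hat\pi^\star} \bigr\|_w
     + \bigl\| \hat V^\star - V^\star \bigr\|_w .
\]
By the standing conventions $\hat\pi^\star \in \hat{\ALPHABET G}(\hat V^\star) \cap \Pi_S(\tuple) \cap \hat\Pi_S(\tuple)$ and $\pi^\star \in \ALPHABET G(V^\star) \cap \Pi_S(\tuple)$, so every hypothesis needed to invoke Lemmas~\ref{lem:bounded}--\ref{lem:value-error} is in force. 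The second term on the right is dispatched directly by Lemma~\ref{lem:value-error}: \eqref{eq:N-1} gives $\MISMATCH^{\pi^\star,\hat\pi^\star}\hat V^\star/(1-\gamma\kappa)$ and \eqref{eq:N0} gives $\MISMATCH^{\pi^\star,\hat\pi^\star} V^\star/(1-\gamma\kappa)$ under Assumptions~\ref{assm:DP-solvability}--\ref{assm:stability}, \eqref{eq:N1} gives $\MISMATCH^\star\hat V^\star/(1-\gamma\kappa)$ under Assumption~\ref{ass:N1}, and \eqref{eq:N2} gives $\MISMATCH^\star V^\star/(1-\gamma\kappa)$ under Assumption~\ref{ass:N2}.

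For the first displayed inequality in part~(1) and for part~(2), I would bound the first term by applying Lemma~\ref{lem:policy-error} with $\pi = \hat\pi = \hat\pi^\star$ and keeping the $\hat V^{\hat\pi^\star}$ branch of the minimum; using $\MISMATCH^{\hat\pi^\star,\hat\pi^\star} = \MISMATCH^{\hat\pi^\star}$ and $\hat V^{\hat\pi^\star} = \hat V^\star$ this gives $\| V^{\hat\pi^\star} - \hat V^{\hat\pi^\star} \|_w \le \MISMATCH^{\hat\pi^\star}\hat V^\star/(1-\gamma\kappa)$. Adding the matching value-error term ($\MISMATCH^{\pi^\star,\hat\pi^\star}\hat V^\star$ via \eqref{eq:N-1} for part~(1), $\MISMATCH^\star\hat V^\star$ via \eqref{eq:N1} for part~(2)) and factoring out $1/(1-\gamma\kappa)$ yields exactly the stated bounds.

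For the second displayed inequality in part~(1) and for part~(3) one cannot apply Lemma~\ref{lem:policy-error} verbatim, since the Bellman mismatch is wanted at the base point $V^\star$ rather than at $V^{\hat\pi^\star}$ or $\hat V^{\hat\pi^\star}$. Instead I would re-run the telescoping estimate directly: writing $V^{\hat\pi^\star} = \BELLMAN^{\hat\pi^\star} V^{\hat\pi^\star}$ and $\hat V^{\hat\pi^\star} = \hat\BELLMAN^{\hat\pi^\star}\hat V^{\hat\pi^\star}$, inserting the intermediate terms $\BELLMAN^{\hat\pi^\star} V^\star$ and $\hat\BELLMAN^{\hat\pi^\star} V^\star$, controlling the two outer differences by the $\gamma\kappa$-contraction property of $\BELLMAN^{\hat\pi^\star}$ and $\hat\BELLMAN^{\hat\pi^\star}$ on $\ALPHABET V_w$ (Lemma~\ref{lem:bounded}, applicable since $\hat\pi^\star$ lies in both stability classes and $V^\star, V^{\hat\pi^\star}, \hat V^\star \in \ALPHABET V_w$), recognizing the middle difference as $\MISMATCH^{\hat\pi^\star} V^\star$, and using $\hat V^{\hat\pi^\star} = \hat V^\star$, I obtain
\[
   \bigl\| V^{\hat\pi^\star} - \hat V^{\hat\pi^\star} \bigr\|_w
   \le \gamma\kappa \bigl\| V^{\hat\pi^\star} - V^\star \bigr\|_w
     + \MISMATCH^{\hat\pi^\star} V^\star
     + \gamma\kappa \bigl\| V^\star - \hat V^\star \bigr\|_w .
\]
Substituting this into the triangle inequality above and moving the $\| V^{\hat\pi^\star} - V^\star \|_w$ contribution (coefficient $\gamma\kappa<1$) to the left-hand side gives
\[
   \bigl\| V^{\hat\pi^\star} - V^\star \bigr\|_w
   \le \frac{1}{1-\gamma\kappa}\,\MISMATCH^{\hat\pi^\star} V^\star
     + \frac{1+\gamma\kappa}{1-\gamma\kappa}\,\bigl\| V^\star - \hat V^\star \bigr\|_w ,
\]
and finally substituting \eqref{eq:N0} (for part~(1)) or \eqref{eq:N2} (for part~(3)) produces the $(1+\gamma\kappa)/(1-\gamma\kappa)^2$ coefficient in the claimed bounds.

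The argument is essentially bookkeeping on top of Lemmas~\ref{lem:policy-error} and \ref{lem:value-error}; the step requiring the most care is the final rearrangement, which uses only $\gamma\kappa<1$, together with checking throughout that each Bellman operator and mismatch functional is evaluated only at functions in $\ALPHABET V_w$ for which the relevant memberships ($\hat\pi^\star \in \Pi_S(\tuple)$ and $\hat\pi^\star \in \hat\Pi_S(\tuple)$) hold so that Lemma~\ref{lem:bounded} applies. I do not anticipate a genuine obstacle beyond this.
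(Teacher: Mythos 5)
Your proposal is correct and follows essentially the same route as the paper: a triangle inequality through $\hat V^\star$ combined with Lemma~\ref{lem:policy-error} and the appropriate part of Lemma~\ref{lem:value-error} for the $\hat V^\star$-based bounds, and a four-term telescoping (contraction, mismatch at $V^\star$, contraction, value error) followed by rearrangement using $\gamma\kappa<1$ for the $V^\star$-based bounds. Your grouping of that telescoping (triangle inequality first, then three terms) is only cosmetically different from the paper's single four-term chain, and the stability memberships you check are exactly the ones the paper relies on.
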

See Appendix~\ref{app:thm:bound} for proof.

\begin{remark}
   Since $  V^{\hat \pi^\star}(s) \ge V^{\star}(s)$, we have 
   \begin{equation}\label{eq:V-bound}
    V^{\hat \pi^\star}(s) - V^\star(s) \le \bigl\| V^{\hat \pi^\star} - V^\star \bigr\|_w w(s).
    \end{equation}
    Thus, the bounds on $ \bigl\| V^{\hat \pi^\star} - V^\star \bigr\|_w$ stated in Theorem~\ref{thm:bound} provide a bound on the performance loss when   $\hat \pi^\star$ is used in the original model $\ALPHABET M$ with a start state $s$. 
\end{remark}

\subsection{Discussion} \label{sec:discussion}
Obtaining a solution of Problem~\ref{prob:main} requires some knowledge of the model. If we were to obtain an exact expression for $\|V^{\star} - V^{\hat \pi^\star} \|_w$, we would need to compute $V^\star$ and $V^{\hat \pi^\star}$, which are the fixed points of $\BELLMAN^\star$ and $\BELLMAN^{\hat \pi^\star}$, respectively. Computing $V^\star$ and $V^{\hat \pi^\star}$ requires starting with an initial choice $V_0$ and then iteratively computing $\{(\BELLMAN^\star)^n V_0\}_{n \ge 1}$ and $\{(\BELLMAN^{\hat \pi^\star})^n V_0\}_{n \ge 1}$ until convergence. In contrast, our upper bounds of Theorem~\ref{thm:bound}, part~2, are in terms of the mismatch Bellman operators, which require \emph{one} update of the Bellman operators $\BELLMAN^{\star}$ and $\BELLMAN^{\hat \pi^\star}$. It is worth highlighting that we do not need to compute $V^\star$ or $\pi^\star$ in order to use the bounds of Theorem~\ref{thm:bound}, part~2.
Thus, our upper bounds provide significant computational savings, especially when computing a Bellman update in the original model is computationally expensive.  

Another feature of our results is that they characterize the sensitivity of the optimal performance to model approximation. To make this notion precise, we need to define a notion of \emph{distance} between the original and approximate model. We elaborate on this direction in Sec.~\ref{sec:IPM}. We first present a few generalizations of the bounds.

\subsection{Bounds under stability of deterministic open loop policies}
Let $\pi_a$ denote the deterministic open loop policy that selects action $a$ with probability $1$ in all states, i.e., $\pi_a(s) =a$ for all $s$. In this section, we assume that all such policies are $(\tuple)$ stable in $ {\ALPHABET M}$ and $\hat {\ALPHABET M}$, and simplify the  bounds of Theorem \ref{thm:bound}.
\begin{assumption}\label{assm:openloop}
For each $a \in \ALPHABET A$,  $\pi_a \in \Pi_S(\tuple) \cap \hat \Pi_S(\tuple)$.
Moreover $\ALPHABET G(\hat V^\star)$ and $\hat {\ALPHABET G}(V^\star)$ are nonempty.
\end{assumption}

Assumption~\ref{assm:openloop} is weaker than $(\kappa, w)$ stability of the model because Assumption~\ref{assm:openloop} does not imply a \emph{uniform} upper bound on $\| c(\cdot,a)\|_w$ over all $a \in \ALPHABET A$.

\begin{lemma}\label{lem:assm_5_4}
    Assumptions~\ref{assm:DP-solvability} and \ref{assm:openloop} imply Assumptions~\ref{ass:N1} and~\ref{ass:N2}.
\end{lemma}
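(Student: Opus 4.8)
The plan is to show each of the two conclusions — that $\ALPHABET G(\hat V^\star) \cap \Pi_S(\tuple) \neq \emptyset$ (Assumption~\ref{ass:N1}) and that $\hat{\ALPHABET G}(V^\star) \cap \hat\Pi_S(\tuple) \neq \emptyset$ (Assumption~\ref{ass:N2}) — by the same argument applied to the two models. I will spell out the first; the second follows by swapping the roles of $\ALPHABET M$ and $\hat{\ALPHABET M}$. By Assumption~\ref{assm:openloop}, $\ALPHABET G(\hat V^\star)$ is nonempty, so fix some $\pi \in \ALPHABET G(\hat V^\star)$. The content to establish is that $\pi \in \Pi_S(\tuple)$, i.e., that $\|c_\pi\|_w < \infty$ and that $\int_{\ALPHABET S} w(s')\,P_\pi(ds'\mid s) \le \kappa w(s)$ for all $s$.

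First I would handle the transition-kernel condition~\eqref{eq:kappa}. Since $\pi$ is a (possibly randomized) policy, for each $s$ the measure $P_\pi(\cdot\mid s)$ is a mixture $\int_{\ALPHABET A} \pi(da\mid s) P(\cdot\mid s,a)$ of the kernels of the open-loop policies $\pi_a$. By Assumption~\ref{assm:openloop}, each $\pi_a \in \Pi_S(\tuple)$, so $\int_{\ALPHABET S} w(s') P(ds'\mid s,a) \le \kappa w(s)$ for every $a$ and $s$. Integrating this inequality over $a$ against $\pi(da\mid s)$ and using that $\int_{\ALPHABET S} w(s') P_\pi(ds'\mid s) = \int_{\ALPHABET A}\pi(da\mid s)\int_{\ALPHABET S} w(s') P(ds'\mid s,a)$ (Tonelli, $w \ge 1 > 0$), we get $\int_{\ALPHABET S} w(s') P_\pi(ds'\mid s) \le \kappa w(s)$, which is exactly~\eqref{eq:kappa}. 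This step is routine.

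The main obstacle is the cost condition $\|c_\pi\|_w < \infty$, because Assumption~\ref{assm:openloop} deliberately does \emph{not} give a uniform bound on $\|c(\cdot,a)\|_w$ over $a$, so we cannot simply average $\|c(\cdot,a)\|_w \le c_{\max}$ as in the model-stability case. The key is to exploit that $\pi$ is greedy with respect to $\hat V^\star$ in model $\ALPHABET M$: by definition of $\ALPHABET G(\hat V^\star)$, we have $\BELLMAN^\pi \hat V^\star = \BELLMAN^\star \hat V^\star$, hence $c_\pi(s) = [\BELLMAN^\star \hat V^\star](s) - \gamma\int_{\ALPHABET S}\hat V^\star(s') P_\pi(ds'\mid s)$. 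Now $\hat V^\star \in \ALPHABET V_w$ by DP-solvability (Assumption~\ref{assm:DP-solvability}), and by the DP-solvability hypothesis $\BELLMAN^\star$ maps $\ALPHABET V_w$ into $\ALPHABET V_w$, so $\BELLMAN^\star \hat V^\star \in \ALPHABET V_w$; the integral term is bounded in weighted norm by $\|\hat V^\star\|_w \cdot \kappa w(s)$ using~\eqref{eq:kappa} just established (together with $|\hat V^\star(s')| \le \|\hat V^\star\|_w w(s')$). Combining, $|c_\pi(s)| \le \|\BELLMAN^\star \hat V^\star\|_w w(s) + \gamma\kappa\|\hat V^\star\|_w w(s)$, so $\|c_\pi\|_w \le \|\BELLMAN^\star \hat V^\star\|_w + \gamma\kappa\|\hat V^\star\|_w < \infty$. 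Thus $\pi \in \Pi_S(\tuple)$, establishing Assumption~\ref{ass:N1}.

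For Assumption~\ref{ass:N2}, run the identical argument in $\hat{\ALPHABET M}$: pick $\hat\pi \in \hat{\ALPHABET G}(V^\star)$ (nonempty by Assumption~\ref{assm:openloop}), use that each $\pi_a \in \hat\Pi_S(\tuple)$ to get $\int \bar w\,\hat P_{\hat\pi} \le \kappa w$, use $\hat\BELLMAN^{\hat\pi} V^\star = \hat\BELLMAN^\star V^\star$ with $V^\star \in \ALPHABET V_w$ and the DP-solvability of $\hat{\ALPHABET M}$ (so $\hat\BELLMAN^\star V^\star \in \ALPHABET V_w$) to conclude $\|\hat c_{\hat\pi}\|_w < \infty$, hence $\hat\pi \in \hat\Pi_S(\tuple)$ and $\hat{\ALPHABET G}(V^\star) \cap \hat\Pi_S(\tuple) \neq \emptyset$. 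This completes the proof.
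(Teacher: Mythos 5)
Your proposal is correct and follows essentially the same route as the paper's proof: the kernel condition is obtained by averaging the open-loop stability inequality over $\pi(da\mid s)$, and the cost condition is obtained by isolating $c_\pi$ from the identity $\BELLMAN^\pi \hat V^\star = \BELLMAN^\star \hat V^\star$ and invoking DP-solvability to place $\BELLMAN^\star \hat V^\star$ in $\ALPHABET V_w$. The only (immaterial) difference is that you bound the term $\gamma\int \hat V^\star\,dP_\pi$ two-sidedly via $\|\hat V^\star\|_w$ and the kernel condition, whereas the paper uses only the lower bound $\hat V^\star \ge c_{\min}/(1-\gamma)$.
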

See Appendix~\ref{app:assm_5_4} for proof.

 \begin{definition}\label{def:openloop}
     Suppose Assumption \ref{assm:openloop} holds. Define the \emph{Bellman maximum mismatch functional} $\MISMATCH^{\max}_w \colon \ALPHABET V_w \to \reals_{\ge 0}$  as follows: for any $v \in \ALPHABET V_w$, 
     \begin{align*}
         \MISMATCH_w^{\max}v 
         &= \sup_{a \in \ALPHABET A} \MISMATCH^{\pi_a}_w v
         = \sup_{a \in  \ALPHABET A} \| \BELLMAN^{\pi_a}v - \hat{\BELLMAN}^{\pi_a}v \|_w .
     \end{align*}
\end{definition}
In the sequel, we omit the subscript $w$ from the functional defined above for simplicity.

 \begin{lemma}\label{lem:mismatch}
    Under Assumptions~\ref{assm:DP-solvability}, \ref{assm:stability} and~\ref{assm:openloop}, the  Bellman mismatch functionals satisfy the following for any $v \in \ALPHABET V_w$:
        \begin{equation}\label{eq:mismatch-reln}
            \sup_{\pi \in \Pi_S(\tuple) \cap \hat \Pi_S(\tuple)} \MISMATCH^\pi v = \MISMATCH^{\max} v
            \quad\text{and}\quad
            \MISMATCH^\star v \le \MISMATCH^{\max} v.
        \end{equation}
\end{lemma}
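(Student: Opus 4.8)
The plan is to reduce both claims to the open-loop operators $\BELLMAN^{\pi_a}$, $\hat\BELLMAN^{\pi_a}$ by exploiting that, for a (possibly randomized) policy $\pi$, the operator $\BELLMAN^\pi$ acting on $v$ is nothing but the $\pi(\cdot\mid s)$-average of the operators $\BELLMAN^{\pi_a}$ acting on $v$. Concretely, the first step is to show that for every $\pi \in \Pi_S(\tuple)$ and every $v \in \ALPHABET V_w$,
\[
   [\BELLMAN^\pi v](s) = \int_{\ALPHABET A} [\BELLMAN^{\pi_a}v](s)\, \pi(da\mid s),
\]
and the analogous identity for $\hat\BELLMAN^\pi$ when $\pi \in \hat\Pi_S(\tuple)$. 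This follows by writing out $c_\pi(s) = \int c(s,a)\pi(da\mid s)$ and $\int v\, dP_\pi(\cdot\mid s) = \int_{\ALPHABET A}\pi(da\mid s)\int_{\ALPHABET S} v(s')P(ds'\mid s,a)$, where the interchange of integration order is justified by Tonelli/Fubini because $v$ is measurable, $|v|\le\|v\|_w w$, and $\int_{\ALPHABET S} w\, dP_\pi(\cdot\mid s) \le \kappa w(s) < \infty$ by $(\tuple)$-stability of $\pi$. (Lemma~\ref{lem:bounded} further guarantees $\BELLMAN^\pi v,\hat\BELLMAN^\pi v \in \ALPHABET V_w$, so $\MISMATCH^\pi v$ is a well-defined finite number for $\pi\in\Pi_S(\tuple)\cap\hat\Pi_S(\tuple)$.)

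For the first equality, I would fix $\pi \in \Pi_S(\tuple)\cap\hat\Pi_S(\tuple)$ and $v\in\ALPHABET V_w$, subtract the two averaging identities to obtain $[\BELLMAN^\pi v - \hat\BELLMAN^\pi v](s) = \int_{\ALPHABET A}[\BELLMAN^{\pi_a}v - \hat\BELLMAN^{\pi_a}v](s)\,\pi(da\mid s)$, then pass the absolute value inside the integral (triangle inequality / Jensen) and bound, for each $a$, $\bigl|[\BELLMAN^{\pi_a}v - \hat\BELLMAN^{\pi_a}v](s)\bigr| \le \MISMATCH^{\pi_a}v\cdot w(s) \le \MISMATCH^{\max}v\cdot w(s)$. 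Dividing by $w(s)$ and taking the supremum over $s$ gives $\MISMATCH^\pi v \le \MISMATCH^{\max}v$, hence $\sup_{\pi}\MISMATCH^\pi v \le \MISMATCH^{\max}v$. The reverse inequality is immediate: by Assumption~\ref{assm:openloop} each open-loop policy $\pi_a$ lies in $\Pi_S(\tuple)\cap\hat\Pi_S(\tuple)$, so the supremum over $\pi$ is at least $\sup_{a\in\ALPHABET A}\MISMATCH^{\pi_a}v = \MISMATCH^{\max}v$.

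For the bound on $\MISMATCH^\star v$, the key observation is that $[\BELLMAN^\star v](s) = \inf_{a\in\ALPHABET A}[\BELLMAN^{\pi_a}v](s)$ and $[\hat\BELLMAN^\star v](s) = \inf_{a\in\ALPHABET A}[\hat\BELLMAN^{\pi_a}v](s)$, since $\pi_a$ selects action $a$ deterministically and the bracketed quantity coincides with $c(s,a)+\gamma\int v\,P(ds'\mid s,a)$ (resp.\ with $\hat c$, $\hat P$). Combining this with the elementary inequality $\bigl|\inf_a f(a) - \inf_a g(a)\bigr| \le \sup_a |f(a)-g(a)|$ (proved by a one-line $\varepsilon$-argument: choose $a$ nearly attaining one infimum), applied pointwise in $s$, yields $\bigl|[\BELLMAN^\star v - \hat\BELLMAN^\star v](s)\bigr| \le \sup_a\bigl|[\BELLMAN^{\pi_a}v - \hat\BELLMAN^{\pi_a}v](s)\bigr| \le \MISMATCH^{\max}v\cdot w(s)$. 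Dividing by $w(s)$ and taking the supremum over $s$ gives $\MISMATCH^\star v \le \MISMATCH^{\max}v$.

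None of these steps is genuinely difficult; the only point requiring care is the averaging identity for $\BELLMAN^\pi$ and $\hat\BELLMAN^\pi$ — specifically, verifying that the Fubini interchange is legitimate for unbounded $v\in\ALPHABET V_w$, which is exactly where the $(\tuple)$-stability hypotheses (and Lemma~\ref{lem:bounded}, ensuring the functionals are finite) are invoked. Once that identity is in hand, the remaining arguments are just Jensen's inequality and the $1$-Lipschitz property of the pointwise infimum.
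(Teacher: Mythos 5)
Your proposal is correct and follows essentially the same route as the paper: the paper's proof also writes $[\BELLMAN^{\pi}v-\hat\BELLMAN^{\pi}v](s)$ as the $\pi(\cdot\mid s)$-average of the per-action mismatches $\Xi^{(s,a)}v=[\BELLMAN^{\pi_a}v-\hat\BELLMAN^{\pi_a}v](s)$, obtains the reverse inequality from $\pi_a\in\Pi_S(\tuple)\cap\hat\Pi_S(\tuple)$, and handles $\MISMATCH^{\star}$ via the same inequality $|\inf_a f(a)-\inf_a g(a)|\le\sup_a|f(a)-g(a)|$. Your added care about the Fubini interchange for unbounded $v\in\ALPHABET V_w$ is a minor refinement the paper leaves implicit.
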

See Appendix~\ref{app:mismatch} for proof.

\begin{theorem}\label{thm:openloop bound}
   Under Assumptions~\ref{assm:DP-solvability}, \ref{assm:stability} and~\ref{assm:openloop}, we have the following two bounds on $V^{\hat \pi^\star} - V^\star$:
   \begin{enumerate}
       \item \textbf{Bound in terms of properties of $\hat V^\star$:}
       \begin{align*}
          \bigl\| V^{\hat \pi^\star} - V^\star \bigr\|_w
             &\leq \frac{2}{1-\gamma\kappa} \MISMATCH^{\max}(\hat V^\star).
       \end{align*}
       \item \textbf{Bound in terms of properties of $V^\star$:}
       \begin{align*}
          \bigl\| V^{\hat \pi^\star} - V^\star \bigr\|_w
             &\leq \frac{2}{(1-\gamma\kappa)^2} \MISMATCH^{\max}(V^\star).
       \end{align*}
   \end{enumerate}
\end{theorem}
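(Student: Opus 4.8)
The plan is to derive both bounds by specializing Theorem~\ref{thm:bound} and then replacing the various mismatch functionals by $\MISMATCH^{\max}$ via Lemma~\ref{lem:mismatch}. First I would invoke Lemma~\ref{lem:assm_5_4}: since Assumptions~\ref{assm:DP-solvability} and~\ref{assm:openloop} are in force, both Assumption~\ref{ass:N1} and Assumption~\ref{ass:N2} hold, so parts~2 and~3 of Theorem~\ref{thm:bound} are available. I would also note, as already stipulated after Assumption~\ref{assm:stability}, that $\hat\pi^\star \in \hat{\ALPHABET G}(\hat V^\star) \cap \Pi_S(\tuple) \cap \hat\Pi_S(\tuple)$, so in particular $\hat\pi^\star \in \Pi_S(\tuple) \cap \hat\Pi_S(\tuple)$ and the functionals $\MISMATCH^{\hat\pi^\star}\hat V^\star$ and $\MISMATCH^{\hat\pi^\star} V^\star$ are well defined; moreover, by Lemma~\ref{lem:mismatch}, $\MISMATCH^{\hat\pi^\star} v \le \sup_{\pi \in \Pi_S(\tuple)\cap\hat\Pi_S(\tuple)}\MISMATCH^{\pi}v = \MISMATCH^{\max}v$ and $\MISMATCH^\star v \le \MISMATCH^{\max}v$ for every $v \in \ALPHABET V_w$.

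For the first bound, I would start from part~2 of Theorem~\ref{thm:bound},
\[
   \bigl\| V^{\hat \pi^\star} - V^\star \bigr\|_w
   \le \frac{1}{1-\gamma\kappa}\bigl[\MISMATCH^{\hat\pi^\star}\hat V^\star + \MISMATCH^{\star}\hat V^\star\bigr],
\]
and bound each of the two terms on the right by $\MISMATCH^{\max}(\hat V^\star)$ using Lemma~\ref{lem:mismatch} with $v = \hat V^\star$; this yields $\bigl\| V^{\hat \pi^\star} - V^\star \bigr\|_w \le \tfrac{2}{1-\gamma\kappa}\MISMATCH^{\max}(\hat V^\star)$, which is exactly part~1.

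For the second bound, I would start from part~3 of Theorem~\ref{thm:bound},
\[
   \bigl\| V^{\hat \pi^\star} - V^\star \bigr\|_w
   \le \frac{1}{1-\gamma\kappa}\MISMATCH^{\hat\pi^\star}V^\star
   + \frac{1+\gamma\kappa}{(1-\gamma\kappa)^2}\MISMATCH^{\star}V^\star,
\]
bound $\MISMATCH^{\hat\pi^\star}V^\star$ and $\MISMATCH^{\star}V^\star$ by $\MISMATCH^{\max}(V^\star)$ via Lemma~\ref{lem:mismatch} with $v = V^\star$, and then collect the scalar coefficients:
\[
   \frac{1}{1-\gamma\kappa} + \frac{1+\gamma\kappa}{(1-\gamma\kappa)^2}
   = \frac{(1-\gamma\kappa) + (1+\gamma\kappa)}{(1-\gamma\kappa)^2}
   = \frac{2}{(1-\gamma\kappa)^2},
\]
giving part~2. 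The argument is essentially bookkeeping; the only thing requiring care is checking that the policies $\pi^\star$ and $\hat\pi^\star$ lie in the stability sets needed for the Theorem~\ref{thm:bound} bounds and for the supremum in Lemma~\ref{lem:mismatch} to dominate $\MISMATCH^{\hat\pi^\star}$, which is precisely what Assumptions~\ref{assm:stability} and~\ref{assm:openloop} (together with Lemma~\ref{lem:assm_5_4}) guarantee, so there is no substantive obstacle beyond invoking the right prior results in the right order.
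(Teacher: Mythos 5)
Your proposal is correct and follows exactly the same route as the paper, which proves this theorem by citing Theorem~\ref{thm:bound} (parts 2 and 3), Lemma~\ref{lem:assm_5_4}, and Lemma~\ref{lem:mismatch}; you have simply spelled out the bookkeeping (including the coefficient arithmetic for part~2 and the check that $\hat\pi^\star \in \Pi_S(\tuple)\cap\hat\Pi_S(\tuple)$) that the paper leaves implicit.
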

\begin{proof}
    The result follows from Theorem \ref{thm:bound} (parts 2 and 3), Lemma~\ref{lem:assm_5_4}, and Lemma~\ref{lem:mismatch}.
\end{proof}

\subsection{Generalized  bounds based on affine transformations of the cost}
\label{sec:alpha_beta}

Given an MDP $\ALPHABET M$ and a tuple $\boldsymbol{\alpha} = \scale$ of real numbers where $\alpha_1 > 0$, define a new MDP $\ALPHABET M_{\boldsymbol{\alpha}}$ with the same dynamics as $\ALPHABET M$ but with the cost function  modified to $\alpha_1  c(s,a) + \alpha_2$. For any policy $\pi$, let $V^\pi_{\boldsymbol{\alpha}}$ denote the value function of $\pi$ in $\ALPHABET M_{\boldsymbol{\alpha}}$. Similarly, let $V^\star_{\boldsymbol{\alpha}}$ denote the optimal value function for $\ALPHABET M_{\boldsymbol{\alpha}}$. 

\begin{lemma}\label{lem:alpha-beta}
   The following properties hold for any $s \in \ALPHABET S$:
   \begin{enumerate}
       \item For any policy $\pi$, 
       $V^\pi_{\boldsymbol{\alpha}}(s) = \alpha_1  V^\pi(s) + {\alpha_2}/{(1-\gamma)}$.
       \item  If $\pi^\star$ is optimal for $\ALPHABET M$, then it is also optimal for $\ALPHABET M_{\boldsymbol{\alpha}}$ and
       $V^\star_{\boldsymbol{\alpha}}(s) = \alpha_1  V^\star(s) + {\alpha_2}/{(1-\gamma)}$.
       \item For any policy $\pi$ and weight function $w \colon  \ALPHABET S \to [1, \infty)$, 
       $\| V^{\pi}_{\boldsymbol{\alpha}} - V^\star_{\boldsymbol{\alpha}}\|_w 
       = \alpha_1 \| V^\pi - V^\star \|_w$.
   \end{enumerate}
\end{lemma}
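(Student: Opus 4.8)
The plan is to prove the three parts in order, since part~2 follows from part~1 and part~3 follows from parts~1 and~2. The only real work is part~1, and even that is a direct computation from the definition of the value function.

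For part~1, I would start from the series definition \eqref{eq:value} of the value function, applied to the modified MDP $\ALPHABET M_{\boldsymbol{\alpha}}$. Since $\ALPHABET M_{\boldsymbol{\alpha}}$ has the same dynamics $P$ as $\ALPHABET M$, for a fixed policy $\pi$ the probability measure $\EXP^\pi$ on the system trajectory $(S_t,A_t)_{t\ge 1}$ is identical in the two models. Hence
\[
   V^\pi_{\boldsymbol{\alpha}}(s)
   = \EXP^\pi\!\left[ \sum_{t=1}^\infty \gamma^{t-1}\bigl(\alpha_1 c(S_t,A_t) + \alpha_2\bigr) \,\Big|\, S_1 = s \right]
   = \alpha_1 \EXP^\pi\!\left[ \sum_{t=1}^\infty \gamma^{t-1} c(S_t,A_t) \,\Big|\, S_1 = s \right] + \alpha_2 \sum_{t=1}^\infty \gamma^{t-1},
\]
where splitting the expectation of the sum is justified because $c$ is bounded below, so the negative part of the summand is summable and Tonelli/monotone convergence applies to the (possibly $+\infty$) positive part. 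The first term is $\alpha_1 V^\pi(s)$ and the geometric series evaluates to $1/(1-\gamma)$, giving the claimed identity.

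For part~2, I would argue that since $\alpha_1 > 0$, the affine map $x \mapsto \alpha_1 x + \alpha_2/(1-\gamma)$ is strictly increasing, so it preserves the ordering of value functions over $\pi \in \Pi$; hence a policy minimizing $V^\pi(s)$ for every $s$ also minimizes $V^\pi_{\boldsymbol{\alpha}}(s)$ for every $s$, which is exactly optimality in $\ALPHABET M_{\boldsymbol{\alpha}}$ by \eqref{eq:opt}. Applying part~1 with $\pi = \pi^\star$ then yields $V^\star_{\boldsymbol{\alpha}}(s) = \alpha_1 V^\star(s) + \alpha_2/(1-\gamma)$. For part~3, subtracting the identities from parts~1 and~2 cancels the common additive constant $\alpha_2/(1-\gamma)$, leaving $V^\pi_{\boldsymbol{\alpha}}(s) - V^\star_{\boldsymbol{\alpha}}(s) = \alpha_1\bigl(V^\pi(s) - V^\star(s)\bigr)$ for all $s$; dividing by $w(s)$, taking absolute values, using $\alpha_1 > 0$, and taking the supremum over $s$ gives $\|V^\pi_{\boldsymbol{\alpha}} - V^\star_{\boldsymbol{\alpha}}\|_w = \alpha_1 \|V^\pi - V^\star\|_w$.

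The main thing to be careful about is not a deep obstacle but a technical point: the interchange of $\EXP^\pi$ with the infinite sum and the separation into two series in part~1. Because the per-step cost can be unbounded above, $V^\pi(s)$ may equal $+\infty$; the lower bound $c \ge c_{\min}$ ensures the sum $\sum_t \gamma^{t-1} c(S_t,A_t)$ is well defined in $[c_{\min}/(1-\gamma), +\infty]$ and that linearity of expectation extends to this setting, so the identity holds even in the extended-real-valued case (consistently with $\ALPHABET V$ allowing the value $+\infty$). Everything else is routine.
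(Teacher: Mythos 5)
Your proposal is correct and follows the same route as the paper, which simply notes that parts 1 and 2 are immediate consequences of the definitions and that part 3 follows from them; you have merely spelled out the routine details (the series computation, the monotone affine map argument, and the cancellation of the additive constant), including the measure-theoretic justification for the possibly $+\infty$-valued case, which the paper leaves implicit.
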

\begin{proof}
    Properties 1 and 2 are immediate consequences of the definitions. Property 3 follows from properties 1 and~2.
\end{proof}
Lemma~\ref{lem:alpha-beta} provides an alternative way of bounding the performance loss when the optimal policy $\hat \pi^\star$ for the approximate model $\hat {\ALPHABET M}$ is used in the true model ${\ALPHABET M}$. We can first view $\hat {\ALPHABET M}$ as an approximation for $\ALPHABET M_{\boldsymbol{\alpha}}$ and bound the approximation error $\|V^{\hat \pi^\star}_{\boldsymbol{\alpha}} - V^\star_{\boldsymbol{\alpha}}\|_w$ in $\ALPHABET M_{\boldsymbol{\alpha}}$. 
 Part 3 of Lemma~\ref{lem:alpha-beta} implies that the approximation error in $\ALPHABET M$ is   simply $1/\alpha_1$ times the approximation error in $\ALPHABET M_{\boldsymbol{\alpha}}$. 
 
 To bound the approximation error in $\ALPHABET M_{\boldsymbol{\alpha}}$, let $\BELLMAN^\pi_{\boldsymbol{\alpha}}$ and $\BELLMAN^\star_{\boldsymbol{\alpha}}$ denote the Bellman operator and Bellman optimality operator for $\ALPHABET M_{\boldsymbol{\alpha}}$. Let $\ALPHABET G_{\boldsymbol{\alpha}}(v)$ denote the set of one-step greedy policies with respect to $v$ in model $\ALPHABET M_{\boldsymbol{\alpha}}$. 
 
Note that for any $\scale$ with $\alpha_1 > 0$, we have that:
    (i)~If $\ALPHABET M$ is DP-solvable, then so is $\ALPHABET M_{\boldsymbol{\alpha}}$;
  (ii) the set of  $(\tuple)$-stable policies is the same  for $\ALPHABET M$ and $\ALPHABET M_{\boldsymbol{\alpha}}$. Consequently, if any of  Assumptions~\ref{assm:DP-solvability}, \ref{assm:stability} or \ref{assm:openloop} holds for  $\ALPHABET M$ and $\hat {\ALPHABET M}$, then it also holds for $\ALPHABET M_{\boldsymbol{\alpha}}$ and $\hat {\ALPHABET M}$.  Instead of Assumptions~\ref{ass:N1} and~\ref{ass:N2}, we need the following  alternative assumptions.
  
\begin{assumption}\label{ass:N1_alt}
    The set $\ALPHABET G_{\boldsymbol{\alpha}}(\hat V^\star) \cap  \Pi_S(\tuple)$ is nonempty.
\end{assumption}
\begin{assumption}\label{ass:N2_alt}
    The set $\hat {\ALPHABET G}(V_{\boldsymbol{\alpha}}^\star) \cap  \hat \Pi_S(\tuple)$ is nonempty.
\end{assumption}
 
 We can now define mismatch functionals (analogous to those defined in Definitions~\ref{def:bellman_mismatch} and~\ref{def:openloop}) using $\ALPHABET M_{\boldsymbol{\alpha}}$ and $\hat {\ALPHABET M}$.
 
 \begin{definition}
\label{def:bellman_mismatch_alpha}
Suppose Assumptions~\ref{assm:DP-solvability} and~\ref{assm:stability} hold. Define the following functionals:
\begin{itemize}
    \item For any $\pi \in \Pi_S(\tuple)$ and $\hat\pi \in \hat \Pi_S(\tuple)$, define the \emph{Bellman mismatch functional}  $\MISMATCH_{\boldsymbol{\alpha}}^{\pi,\hat\pi} \colon \ALPHABET V_w \to \reals_{\ge 0}$  as follows: for any $v \in \ALPHABET V_w$,
    \[
        \MISMATCH^{\pi,\hat\pi}_{\boldsymbol{\alpha}} v = \| \BELLMAN^{\pi}_{\boldsymbol{\alpha}} v - \hat{\BELLMAN}^{\hat \pi} v \|_w.
    \]

    \item For any $\pi \in \Pi_S(\tuple) \cap \hat \Pi_S(\tuple)$, define the \emph{Bellman mismatch functional} $\MISMATCH^\pi_{\boldsymbol{\alpha}} \colon \ALPHABET V_w \to \reals_{\ge 0}$ as follows: for any $v \in \ALPHABET V_w$ 
    \[
        \MISMATCH^\pi_{\boldsymbol{\alpha}} v = \MISMATCH^{\pi,\pi}_{\boldsymbol{\alpha}}v= \| \BELLMAN^{\pi}_{\boldsymbol{\alpha}}v - \hat{\BELLMAN}^{\pi}v \|_w.
    \]
    
    \item Define the \emph{Bellman optimality mismatch functional} $\MISMATCH^\star_{\boldsymbol{\alpha}} \colon \ALPHABET V_w \to \reals_{\ge 0}$  as follows: for any $v \in \ALPHABET V_w$, 
    \[
         \MISMATCH_{\boldsymbol{\alpha}}^{\star}v = \| \BELLMAN^{\star}_{\boldsymbol{\alpha}} v - \hat{\BELLMAN}^{\star} v \|_w.
    \]
    
\end{itemize}
\end{definition}

\begin{definition}
     Suppose Assumption \ref{assm:openloop} holds. Define the \emph{Bellman maximum mismatch functional} $\MISMATCH^{\max}_{\boldsymbol{\alpha}} \colon \ALPHABET V_w \to \reals_{\ge 0}$  as follows: for any $v \in \ALPHABET V_w$, 
     \[
         \MISMATCH_{\boldsymbol{\alpha}}^{\max}v =\sup_{a \in \ALPHABET A} \MISMATCH_{\boldsymbol{\alpha}}^{\pi_a} v
         =\sup_{a \in  \ALPHABET A} \| \BELLMAN^{\pi_a}_{\boldsymbol{\alpha}}v - \hat{\BELLMAN}^{\pi_a}v \|_w .
     \]
\end{definition}

We can now use Lemma~\ref{lem:alpha-beta} to present variants of  Theorem~\ref{thm:bound} and Theorem~\ref{thm:openloop bound}.

\begin{theorem}\label{thm:alpha_beta}
    For any $\boldsymbol{\alpha} = \scale$ with $\alpha_1 > 0$, we have the following bounds on $V^{\hat \pi^\star} - V^\star$:
   \begin{enumerate}
   \item Under Assumptions~\ref{assm:DP-solvability} and~\ref{assm:stability}, we have
          \begin{align*}
          \bigl\| V^{\hat \pi^\star} - V^\star \bigr\|_w
          &\leq
          \frac{1}{\alpha_1(1 - \gamma \kappa) }
          \bigl[ \MISMATCH_{\boldsymbol{\alpha}}^{\hat \pi^\star} \hat V^\star
          + \MISMATCH_{\boldsymbol{\alpha}}^{\pi^\star, \hat \pi^\star} \hat V^\star \bigr]
       \end{align*}
       and
    \begin{align*}
      \bigl\| V^{\hat \pi^\star} - V^\star \bigr\|_w
      &\le
      \frac{1}{\alpha_1(1 - \gamma \kappa) }
      \MISMATCH_{\boldsymbol{\alpha}}^{\hat \pi^\star} (\alpha_1 V^\star) \notag \\
      &\quad +
      \frac{(1 + \gamma \kappa)}{\alpha_1(1 - \gamma \kappa)^2 }
      \MISMATCH_{\boldsymbol{\alpha}}^{\pi^\star, \hat \pi^\star}(\alpha_1 V^\star). 
   \end{align*}
    \item Under Assumptions~\ref{assm:DP-solvability}, \ref{assm:stability} and~\ref{ass:N1_alt}, we have
      \begin{align*}
          \bigl\| V^{\hat \pi^\star} - V^\star \bigr\|_w
          &\le 
          \frac{ 1  }
         {\alpha_1 (1 - \gamma \kappa) }
          \bigl[ \MISMATCH^{\hat \pi^\star}_{\boldsymbol{\alpha}} \hat V^\star
          + \MISMATCH^{\star}_{\boldsymbol{\alpha}} \hat V^\star \bigr].
       \end{align*}
       \item Under Assumptions~\ref{assm:DP-solvability}, \ref{assm:stability} and~\ref{ass:N2_alt}, we have
    \begin{align*}
        \bigl\| V^{\hat \pi^\star} - V^\star \bigr\|_w
          &\le
          \frac{1}{\alpha_1(1 - \gamma \kappa) }
          \MISMATCH^{\hat \pi^\star}_{\boldsymbol{\alpha}} (\alpha_1 V^\star)
          \notag \\
          &\quad +
          \frac{(1 + \gamma \kappa)}{\alpha_1(1 - \gamma \kappa)^2 }
          \MISMATCH^{\star}_{\boldsymbol{\alpha}} (\alpha_1 V^\star). 
    \end{align*}
   \end{enumerate}
   \end{theorem}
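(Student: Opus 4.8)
The plan is to avoid re-deriving anything: every bound in the statement follows by applying Theorem~\ref{thm:bound} to the pair of models $(\ALPHABET M_{\boldsymbol{\alpha}}, \hat{\ALPHABET M})$ in place of $(\ALPHABET M, \hat{\ALPHABET M})$, and then rescaling via Lemma~\ref{lem:alpha-beta}. First I would record, as already noted in the paragraph preceding the theorem, that $\ALPHABET M_{\boldsymbol{\alpha}}$ has the same dynamics as $\ALPHABET M$, so it is DP-solvable whenever $\ALPHABET M$ is and $\Pi_S(\tuple)$ is unchanged; hence Assumptions~\ref{assm:DP-solvability} and~\ref{assm:stability} hold for the pair $(\ALPHABET M_{\boldsymbol{\alpha}}, \hat{\ALPHABET M})$, and Assumptions~\ref{ass:N1_alt} and~\ref{ass:N2_alt} are precisely the instances of Assumptions~\ref{ass:N1} and~\ref{ass:N2} written for this pair (the one-step-lookahead policies now being built from the greedy operator $\ALPHABET G_{\boldsymbol{\alpha}}$ of the transformed model and from $V^\star_{\boldsymbol{\alpha}}$). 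By Lemma~\ref{lem:alpha-beta}(2), $\pi^\star$ is still optimal for $\ALPHABET M_{\boldsymbol{\alpha}}$, with optimal value function $V^\star_{\boldsymbol{\alpha}}$, while $\hat\pi^\star$ and $\hat V^\star$ are untouched since $\hat{\ALPHABET M}$ is unchanged. Applying Theorem~\ref{thm:bound} verbatim to $(\ALPHABET M_{\boldsymbol{\alpha}}, \hat{\ALPHABET M})$ therefore yields the three families of bounds on $\| V^{\hat\pi^\star}_{\boldsymbol{\alpha}} - V^\star_{\boldsymbol{\alpha}}\|_w$, with the mismatch functionals $\MISMATCH^{\cdot}_{\boldsymbol{\alpha}}$ of Definition~\ref{def:bellman_mismatch_alpha} replacing $\MISMATCH^{\cdot}$, and with $V^\star_{\boldsymbol{\alpha}}$ as the evaluation point wherever $V^\star$ was used.

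Next I would invoke Lemma~\ref{lem:alpha-beta}(3), which gives $\| V^{\hat\pi^\star}_{\boldsymbol{\alpha}} - V^\star_{\boldsymbol{\alpha}}\|_w = \alpha_1 \| V^{\hat\pi^\star} - V^\star \|_w$; dividing each of the bounds just obtained by $\alpha_1$ produces the $1/\alpha_1$ prefactors in the statement. The only remaining discrepancy is cosmetic: those bounds are expressed with evaluation point $V^\star_{\boldsymbol{\alpha}}$, whereas parts~1 and~3 of the theorem are stated in terms of $\alpha_1 V^\star$. These agree because $V^\star_{\boldsymbol{\alpha}} = \alpha_1 V^\star + \alpha_2/(1-\gamma)$ by Lemma~\ref{lem:alpha-beta}(2), and all the relevant mismatch functionals are invariant under adding a constant to their argument. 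Indeed, one checks directly from the definitions that for every constant $c \in \reals$, $[\BELLMAN^{\pi}_{\boldsymbol{\alpha}}(v+c)](s) = [\BELLMAN^{\pi}_{\boldsymbol{\alpha}} v](s) + \gamma c$, $[\BELLMAN^{\star}_{\boldsymbol{\alpha}}(v+c)](s) = [\BELLMAN^{\star}_{\boldsymbol{\alpha}} v](s) + \gamma c$, and likewise $\hat\BELLMAN^{\hat\pi}(v+c) = \hat\BELLMAN^{\hat\pi} v + \gamma c$ and $\hat\BELLMAN^{\star}(v+c) = \hat\BELLMAN^{\star} v + \gamma c$ pointwise; subtracting the corresponding pair, the additive $\gamma c$ terms cancel, so each of $\MISMATCH^{\pi,\hat\pi}_{\boldsymbol{\alpha}}$, $\MISMATCH^{\pi}_{\boldsymbol{\alpha}}$, and $\MISMATCH^{\star}_{\boldsymbol{\alpha}}$ takes the same value at $v$ and at $v+c$. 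Taking $v = \alpha_1 V^\star$ and $c = \alpha_2/(1-\gamma)$ rewrites every $\MISMATCH^{\cdot}_{\boldsymbol{\alpha}} V^\star_{\boldsymbol{\alpha}}$ as $\MISMATCH^{\cdot}_{\boldsymbol{\alpha}}(\alpha_1 V^\star)$, matching the form in parts~1 and~3, while in part~2 the evaluation point is $\hat V^\star$, which is already in the stated form.

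There is no real obstacle here; the argument is a change of model followed by a rescaling. The one point requiring care is the bookkeeping of hypotheses: one must confirm that each part of Theorem~\ref{thm:bound}, when applied to $(\ALPHABET M_{\boldsymbol{\alpha}}, \hat{\ALPHABET M})$, consumes exactly Assumptions~\ref{assm:DP-solvability}, \ref{assm:stability}, and the \emph{alternative} Assumptions~\ref{ass:N1_alt}/\ref{ass:N2_alt} rather than the originals — which is forced by the fact that the lookahead policies now involve $\ALPHABET G_{\boldsymbol{\alpha}}$ and $V^\star_{\boldsymbol{\alpha}}$, and these differ from $\ALPHABET G$ and $V^\star$ once $\alpha_1 \neq 1$ since the cost (but not the value function) gets rescaled. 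Once the assumptions are matched in this way, parts~1, 2, and~3 of the theorem follow immediately from parts~1, 2, and~3 of Theorem~\ref{thm:bound}, respectively.
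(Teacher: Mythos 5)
Your proposal is correct and follows essentially the same route as the paper's proof: apply Theorem~\ref{thm:bound} to the pair $(\ALPHABET M_{\boldsymbol{\alpha}}, \hat{\ALPHABET M})$, rescale via Lemma~\ref{lem:alpha-beta} part~3, and note that Assumptions~\ref{ass:N1_alt}/\ref{ass:N2_alt} are exactly Assumptions~\ref{ass:N1}/\ref{ass:N2} for the transformed model. Your explicit verification that the mismatch functionals are invariant under adding the constant $\alpha_2/(1-\gamma)$ (both Bellman operators shift by $\gamma c$, which cancels in the difference) is precisely the fact the paper invokes, via Lemma~\ref{lem:alpha-beta} part~2, to replace $\MISMATCH^{\cdot}_{\boldsymbol{\alpha}} V^\star_{\boldsymbol{\alpha}}$ by $\MISMATCH^{\cdot}_{\boldsymbol{\alpha}}(\alpha_1 V^\star)$.
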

See Appendix~\ref{app:alpha_beta} for the proof.

\begin{theorem}\label{thm:alpha_beta_openloop}
   Under Assumptions~\ref{assm:DP-solvability}, \ref{assm:stability} and~\ref{assm:openloop}, we have the following two bounds on $V^{\hat \pi^\star} - V^\star$:
   \begin{enumerate}
       \item \textbf{Bound in terms of properties of $\hat V^\star$:}
       \begin{align*}
          \bigl\| V^{\hat \pi^\star} - V^\star \bigr\|_w
             &\le \frac{2}{\alpha_1(1-\gamma\kappa)} \MISMATCH_{\boldsymbol{\alpha}}^{\max}\hat V^\star.
       \end{align*}
       \item \textbf{Bound in terms of properties of $V^\star$:}
       \begin{align*}
          \bigl\| V^{\hat \pi^\star} - V^\star \bigr\|_w
              \leq \frac{2}{\alpha_1(1-\gamma\kappa)^2} \MISMATCH_{\boldsymbol{\alpha}}^{\max}(\alpha_1 V^\star).
       \end{align*}
   \end{enumerate}
\end{theorem}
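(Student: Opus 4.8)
The plan is to mirror the way Theorem~\ref{thm:openloop bound} was obtained from Theorem~\ref{thm:bound}, but with the affine‑transformed model $\ALPHABET M_{\boldsymbol{\alpha}}$ playing the role of $\ALPHABET M$. Concretely, I would invoke Theorem~\ref{thm:alpha_beta} (parts~2 and~3) and then replace the policy‑specific mismatch functional $\MISMATCH^{\hat\pi^\star}_{\boldsymbol{\alpha}}$ and the optimality mismatch functional $\MISMATCH^\star_{\boldsymbol{\alpha}}$ appearing there by the single maximum mismatch functional $\MISMATCH^{\max}_{\boldsymbol{\alpha}}$; the two stated bounds then fall out by collecting constants.

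First I would confirm that Assumptions~\ref{assm:DP-solvability}, \ref{assm:stability}, and~\ref{assm:openloop} hold for the pair $(\ALPHABET M_{\boldsymbol{\alpha}}, \hat{\ALPHABET M})$; this is exactly what the discussion preceding the statement records, since the affine transformation leaves DP‑solvability and the set $\Pi_S(\tuple)$ unchanged and only shifts/rescales the value functions used in the one‑step greedy sets. Next I would apply Lemma~\ref{lem:assm_5_4} to the pair $(\ALPHABET M_{\boldsymbol{\alpha}}, \hat{\ALPHABET M})$: its conclusion — Assumptions~\ref{ass:N1} and~\ref{ass:N2} for that pair — is precisely Assumptions~\ref{ass:N1_alt} and~\ref{ass:N2_alt}, because $\ALPHABET G_{\boldsymbol{\alpha}}$ is the one‑step greedy set of $\ALPHABET M_{\boldsymbol{\alpha}}$ and, by Lemma~\ref{lem:alpha-beta}, $V^\star_{\boldsymbol{\alpha}}$ is the optimal value function of $\ALPHABET M_{\boldsymbol{\alpha}}$. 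Then I would apply Lemma~\ref{lem:mismatch} to $(\ALPHABET M_{\boldsymbol{\alpha}}, \hat{\ALPHABET M})$, which gives $\MISMATCH^\star_{\boldsymbol{\alpha}} v \le \MISMATCH^{\max}_{\boldsymbol{\alpha}} v$ and $\sup_{\pi \in \Pi_S(\tuple) \cap \hat\Pi_S(\tuple)} \MISMATCH^\pi_{\boldsymbol{\alpha}} v = \MISMATCH^{\max}_{\boldsymbol{\alpha}} v$ for every $v \in \ALPHABET V_w$; since $\hat\pi^\star \in \Pi_S(\tuple) \cap \hat\Pi_S(\tuple)$ (a standing assumption, and these sets are unaffected by the transformation), this also yields $\MISMATCH^{\hat\pi^\star}_{\boldsymbol{\alpha}} v \le \MISMATCH^{\max}_{\boldsymbol{\alpha}} v$.

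The conclusion then follows by substitution. Plugging into Theorem~\ref{thm:alpha_beta}, part~2, with $v = \hat V^\star$, the bracket $\MISMATCH^{\hat\pi^\star}_{\boldsymbol{\alpha}}\hat V^\star + \MISMATCH^\star_{\boldsymbol{\alpha}}\hat V^\star$ is at most $2\,\MISMATCH^{\max}_{\boldsymbol{\alpha}}\hat V^\star$, and the prefactor $\tfrac{1}{\alpha_1(1-\gamma\kappa)}$ produces bound~1. Plugging into part~3 with $v = \alpha_1 V^\star$, both mismatch terms are at most $\MISMATCH^{\max}_{\boldsymbol{\alpha}}(\alpha_1 V^\star)$, and the two coefficients add as $\tfrac{1}{\alpha_1(1-\gamma\kappa)} + \tfrac{1+\gamma\kappa}{\alpha_1(1-\gamma\kappa)^2} = \tfrac{(1-\gamma\kappa)+(1+\gamma\kappa)}{\alpha_1(1-\gamma\kappa)^2} = \tfrac{2}{\alpha_1(1-\gamma\kappa)^2}$, which is bound~2. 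An equivalent route is to apply Theorem~\ref{thm:openloop bound} directly to the pair $(\ALPHABET M_{\boldsymbol{\alpha}}, \hat{\ALPHABET M})$ and convert the left side with Lemma~\ref{lem:alpha-beta}, part~3, which turns $\|V^{\hat\pi^\star}_{\boldsymbol{\alpha}} - V^\star_{\boldsymbol{\alpha}}\|_w$ into $\alpha_1\|V^{\hat\pi^\star} - V^\star\|_w$; for bound~2 one additionally uses that $\MISMATCH^{\max}_{\boldsymbol{\alpha}}$ is invariant under adding a constant to its argument to replace $V^\star_{\boldsymbol{\alpha}} = \alpha_1 V^\star + \alpha_2/(1-\gamma)$ by $\alpha_1 V^\star$.

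I expect the only real delicacy — rather than a genuine obstacle — to be the bookkeeping: matching each mismatch functional to the correct model and the correct argument, noting that scaling the per‑step cost by $\alpha_1 \neq 1$ genuinely changes the one‑step greedy sets (so that the non‑emptiness clauses of Assumption~\ref{assm:openloop} are really needed for $\ALPHABET M_{\boldsymbol{\alpha}}$, as the discussion records), whereas additive constants in the cost are harmless both for the greedy sets and for $\MISMATCH^{\max}_{\boldsymbol{\alpha}}$. Once these identifications are fixed, the proof reduces to the short constant computation above.
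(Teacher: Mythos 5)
Your proof is correct and follows the same route as the paper's: the paper proves this theorem in one line by citing Theorem~\ref{thm:alpha_beta} (parts~2 and~3), Lemma~\ref{lem:mismatch}, and the fact that Assumption~\ref{assm:openloop} implies Assumptions~\ref{ass:N1_alt} and~\ref{ass:N2_alt} via the argument of Lemma~\ref{lem:assm_5_4}. Your constant bookkeeping, including the sum $\tfrac{1}{\alpha_1(1-\gamma\kappa)} + \tfrac{1+\gamma\kappa}{\alpha_1(1-\gamma\kappa)^2} = \tfrac{2}{\alpha_1(1-\gamma\kappa)^2}$, checks out.
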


\begin{proof}The result follows from Theorem \ref{thm:alpha_beta} (parts 2 and 3), Lemma \ref{lem:mismatch} and the fact that Assumption \ref{assm:openloop} implies Assumptions~\ref{ass:N1_alt} and \ref{ass:N2_alt} (using the same argument as Lemma~\ref{lem:assm_5_4}).
\end{proof}

\subsubsection*{Some remarks}
\begin{itemize}
    \item It is possible to optimize the bounds in Theorems~\ref{thm:alpha_beta} and~\ref{thm:alpha_beta_openloop} by optimizing over multiple choices of $\boldsymbol{\alpha}$. For Theorem~\ref{thm:alpha_beta}, parts~2 and~3, we need to ensure that the choice of $\alpha$ satisfies Assumption~\ref{ass:N1_alt} or~\ref{ass:N2_alt}, as appropriate.

    \item 
For $\scale=(1,0)$, $\ALPHABET M_{\boldsymbol{\alpha}} = \ALPHABET M$ and hence, the bounds in Theorems \ref{thm:alpha_beta} and~\ref{thm:alpha_beta_openloop} are identical to those in Theorems \ref{thm:bound} and \ref{thm:openloop bound}, respectively.
    \item Thus, if we optimize over appropriate $(\alpha_1, \alpha_2)$, then the bounds of Theorems~\ref{thm:alpha_beta} and~\ref{thm:alpha_beta_openloop} are tighter than those of Theorems~\ref{thm:bound} and~\ref{thm:openloop bound}. For instance, if $\hat {\ALPHABET M} = \ALPHABET M_{(2,1)}$, then the bounds of Theorems~\ref{thm:alpha_beta} and~\ref{thm:alpha_beta_openloop} are zero for $\boldsymbol{\alpha} = (2,1)$, while the bounds of Theorems~\ref{thm:bound} and~\ref{thm:openloop bound} may be positive.

\end{itemize}

\section{Some instances of the main results}

\subsection{Inventory management}\label{sec:inventory}
In this section, we illustrate the results of Theorem~\ref{thm:bound} for an inventory management problem with state space $\ALPHABET S = \{ -S_{\max}, -S_{\max} + 1, \dots S_{\max} \}$ and action space $\ALPHABET A = \{0, 1, \dots, S_{\max} \}$. Let $S_t \in \ALPHABET S$ denote the amount of stock at the beginning of day~$t$, $A_t \in \ALPHABET A$ denote the stock ordered at the beginning of day~$t$, and $W_t \in \mathds{Z}_{\ge 0}$ denote the demand during day~$t$. The  dynamics are given by
\[
    S_{t+1} = \bigl[ S_t + A_t - W_t \bigr]_{-S_{\max}}^{S_{\max}}
\]
where $[ \cdot ]_{-S_{\max}}^{S_{\max}}$ denotes a function which clips its value between $-S_{\max}$ and $S_{\max}$. The demand $W_t$ is assumed to be an i.i.d.\ Binomial$(n,q)$ process. The per-step cost is given by 
\[
    c(s,a) = pa +  c_h s \IND_{\{s \ge 0\}} - c_s s\IND_{\{s < 0\}}
\]
where $c_h$ is the per-unit holding cost, $c_s$ is the per-unit shortage cost, and $p$ is the per-unit procurement cost. We denote the above model by $\ALPHABET M = (S_{\max}, \gamma, n, q, c_h, c_s, p)$. 

We consider two models:
\vspace*{-0.35\baselineskip}
\begin{tabbing}
    \quad $\bullet$ \= True model \hskip 1.5em \= $\ALPHABET M = (500, 0.75, 10, 0.4, 4.0, 2, 5)$. \\
    \quad $\bullet$ \> Approx.\ model \> $\hat {\ALPHABET M} = (500, 0.75, 10, 0.5, 3.8, 2, 5)$.
\end{tabbing}
\vspace*{-0.35\baselineskip}
Since both models have finite state and action spaces, Assumption~\ref{assm:DP-solvability} is satisfied (with any choice of  weight function). We choose the weight function to have a similar shape as the per-step cost. In particular, we take $w(s) = 1 + (1.5 \cdot 10^{-2}) \bigl[\hat{c}_h s \IND_{\{s \ge 0 \}} - \hat{c}_s s \IND_{\{s < 0\}}\bigr]$, where $\hat{c}_h$ and $\hat{c}_s$ denote the per-unit holding and shortage costs of the approximate model, respectively. We verify that Assumptions~\ref{assm:stability} and~\ref{ass:N1} are satisfied with $\kappa = 1.07$.

The weighted-norm bound of Theorem~\ref{thm:bound}, part~2 implies that
\begin{equation}
   V^{\hat \pi^\star}(s) 
   - 
    \frac{1}{1- \gamma \kappa} 
   \bigl[ \MISMATCH^{\hat \pi^\star} \hat V^\star +   \MISMATCH^\star \hat V^\star \bigr] 
   w(s)
   \le
   V^\star(s) 
   \le
   V^{\hat \pi^\star}(s).
   \label{eq:IM-bound-w}
\end{equation}
We compare these bounds with the sup-norm bounds obtained by taking $w \equiv 1$.

\begin{figure}[!t]
\centering
\begin{subfigure}{0.49\linewidth}
\includegraphics[width=\linewidth]{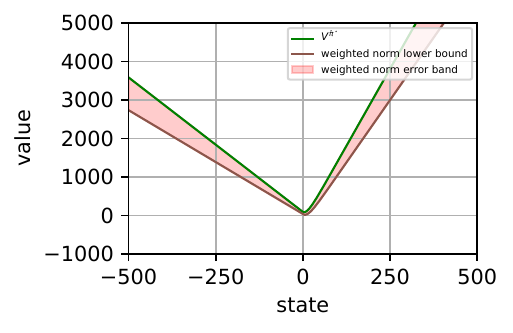}
\caption{weighted-norm bound}
\end{subfigure}\hfill
\begin{subfigure}{0.49\linewidth}
\includegraphics[width=\linewidth]{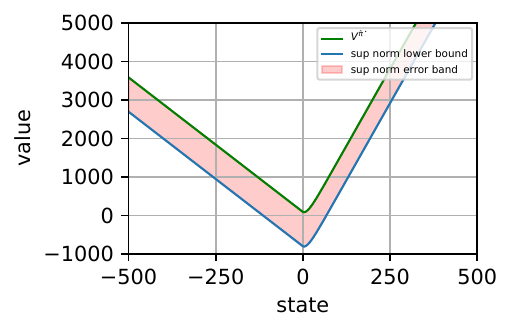}
\caption{sup-norm bound}
\end{subfigure}
\caption{Comparison of the bounds on $V^\star(s)$ based on weighted-norm and sup-norm.}
\label{fig:IM-bound}
\vskip \baselineskip
\begin{subfigure}{0.49\linewidth}
\includegraphics[width=\linewidth]{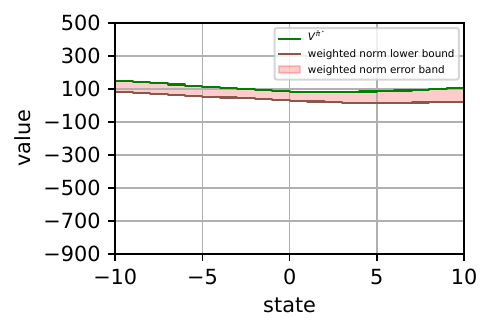}
\caption{weighted-norm bound}
\end{subfigure}\hfill
\begin{subfigure}{0.49\linewidth}
\includegraphics[width=\linewidth]{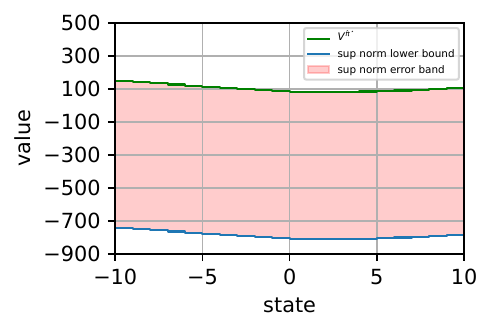}
\caption{sup-norm bound}
\end{subfigure}
\caption{Zoomed-in versions of the bounds of Fig.~\ref{fig:IM-bound}}
\label{fig:IM-bound-zoomed}
\end{figure}

For the models $\ALPHABET M$ and $\hat {\ALPHABET M}$ described above, we compute the policy $\hat \pi^\star$ using value iteration, compute $V^{\hat \pi^\star}$ using policy evaluation, and then plot the upper bound $V^{\hat \pi^\star}$, and  the weighted- and sup-norm lower bounds on $V^\star(s)$ given by the left hand side of~\eqref{eq:IM-bound-w} in Fig.~\ref{fig:IM-bound}. 

Fig.~\ref{fig:IM-bound} shows that the weighted-norm bound is slightly tighter than the sup-norm bound for most states. To better compare the error bounds, we zoom into the region of $\bar {\ALPHABET S} \coloneqq \{-10, -9, \dots, 10\}$ in Fig.~\ref{fig:IM-bound-zoomed}, where the weighted-norm bound is significantly better than the sup-norm bound.

The optimal policy for the inventory management model described above is a base-stock policy~\cite{arrow1951optimal}:
\(
   \pi^\star(s) = \max(0,\allowbreak {s^\star - s}),
\)
 where there is an optimal base-stock level $s^\star$ and whenever the inventory is less than $s^\star$, the optimal action is to order goods so that the inventory becomes $s^\star$. 
For the model $\hat {\ALPHABET M}$, the base-stock level $s^\star = 2$. Since the demand has finite support of $\{0, 1, \dots, 10\}$, after an initial transient period, the inventory level always remains between $\{-8, -7, \dots, 2 \}$. Thus, we care about the performance of an approximate policy in this region and, here, the weighted-norm bounds are substantially tighter than the sup-norm bounds.  These results show that even for finite state and action spaces, weighted-norm bounds can be better than sup-norm bounds. 

\subsection{Initial State dependent weight function}\label{sec:weight}
 Suppose there is a family $\ALPHABET W$ of weight functions such that for every $w \in \ALPHABET W$, there exists a $\kappa_w < 1/\gamma$ such that $(\kappa_w, w)$ satisfies Assumption~\ref{assm:stability}. Then, we can strengthen the result of~\eqref{eq:V-bound} as follows:
   \begin{equation}\label{eq:Vw-bound}
    V^{\hat \pi^\star}(s) - \inf_{w \in \ALPHABET W} \Bigl\{ \bigl\| V^{\hat \pi^\star} - V^\star \bigr\|_w w(s) \Bigr\}
    \le
    V^\star(s)
    \le
    V^{\hat \pi^\star}(s)
    .
    \end{equation}
    Note that the choice of weight function that gives the tightest bound  can vary with the start state~$s$. We illustrate the benefit of such a state dependent choice of weight function for the inventory management model of the previous section.

We consider the following family of weight functions for the inventory management model: 
\begin{equation}\label{eq:W}
   \ALPHABET W = \big\{1 + \ell \bar c(s) : \ell \in \{0, 0.5 \cdot 10^{-2}, 10^{-2}, \dots, 2.5 \cdot 10^{-2} \} \bigr\},
\end{equation}
where $\bar c(s) = \hat{c}_h s \IND_{\{s \ge 0 \}} - \hat{c}_s s \IND_{\{s < 0\}}$. 
Note that for $\ell = 0$, $w(s) = 1$ which corresponds to the sup-norm. 
For each $w \in \ALPHABET W$, we compute the smallest $\kappa_w$ such that Assumption~\ref{assm:stability} is satisfied as per~\eqref{eq:kappa-bound} and further verify that this value satisfies Assumption~\ref{ass:N1}. We plot the  lower bounds on $V^\star(s)$ corresponding to each $w \in \ALPHABET W$   in Fig.~\ref{fig:w-bound}. As can be seen from the figure, the best choice of weight function depends on the state. Minimizing over all $w \in \ALPHABET W$ as per~\eqref{eq:Vw-bound} gives a tighter bound. This tighter lower bound is highlighted in Fig.~\ref{fig:w-bound} using the shaded area shown in red.

\begin{figure}[!t]
\centering
\begin{subfigure}{0.49\linewidth}
\includegraphics[width=\linewidth]{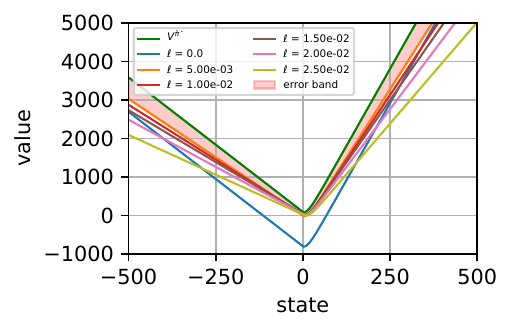}
\caption{entire state space}
\end{subfigure}\hfill
\begin{subfigure}{0.46\linewidth}
\includegraphics[width=\linewidth]{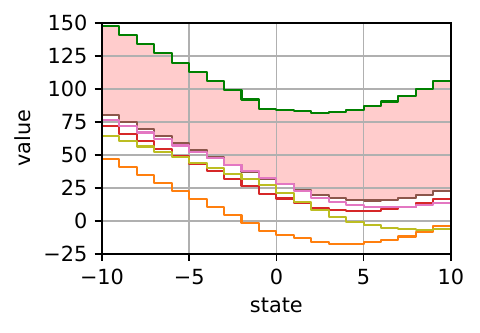}
\caption{zoomed in}
\end{subfigure}
\caption{Lower bounds obtained by different weight functions. Note that the curve corresponding to $\ell = 0$ is not visible in the zoomed in plot (b).}
\label{fig:w-bound}
\end{figure}

\subsection{Generalized bounds based on cost transformation}

The generalized bounds of Theorem~\ref{thm:alpha_beta}, part~2, imply that
\begin{multline}
   V^{\hat \pi^\star}(s) - 
    \frac{1}{\alpha_1(1- \gamma \kappa)} 
   \bigl[ \MISMATCH^{\hat \pi^\star}_{\boldsymbol{\alpha}} \hat V^\star +   \MISMATCH^\star_{\boldsymbol{\alpha}} \hat V^\star \bigr] 
   w(s)
   \le
   V^\star(s)
   \\
   \le
   V^{\hat \pi^\star}(s)
   \label{eq:alpha_beta_IM-bound-w}
\end{multline}
To show that this bound can be better than that of~\eqref{eq:IM-bound-w} obtained from Theorem~\ref{thm:bound}, part~2,
we consider the setup of Sec.~\ref{sec:weight} with $\ell = 1.5 \times 10^{-2}$ and compare $\boldsymbol{\alpha} = (0.98,0.8)$ with $\boldsymbol{\alpha} = (1,0)$. We verify that the appropriate assumptions are satisfied and plot the two bounds in Fig.~\ref{fig:alpha_beta-bound}.
As can be seen from the plots, the bound corresponding to Theorem~\ref{thm:alpha_beta} is tighter. 
\begin{figure}[!h]
\centering
\begin{subfigure}{0.49\linewidth}
\includegraphics[width=\linewidth]{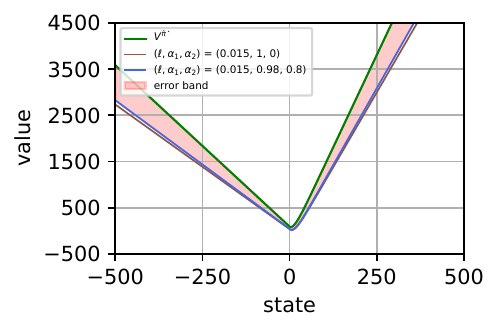}
\caption{entire state space}
\end{subfigure}\hfill
\begin{subfigure}{0.46\linewidth}
\includegraphics[width=\linewidth]{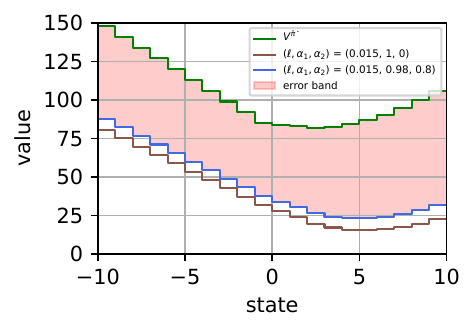}
\caption{zoomed in}
\end{subfigure}
\caption{Lower bounds obtained by different choices of $\boldsymbol{\alpha}$.}
\label{fig:alpha_beta-bound}
\end{figure}

\subsection{Linear quadratic regulator}\label{sec:LQR}

In this section, we use the linear quadratic regulator (LQR) to show that weighted norm bounds of Theorem~\ref{thm:bound} provide meaningful results for models with unbounded per-step cost. 
Consider a LQR problem with state space $\ALPHABET S = \reals^{n_s}$ and action space $\ALPHABET A = \reals^{n_a}$. The dynamics are given by 
\[
    s_{t+1}  = A s_t + B a_t + w_t,
\]
where $A$ and $B$ are system matrices of appropriate dimensions and $\{w_t\}_{t \ge 1}$ is an i.i.d.\ zero-mean noise process with covariance $\Sigma_W$. The per-step cost is given by 
\[
    c(s_t, a_t)  = s_t^\intercal Q s_t + a_t^\intercal R a_t,
\]
where $Q$ and $R$ are, respectively, positive semidefinite and positive definite matrices of appropriate dimensions. We will denote this model by $\ALPHABET M = (A,B,Q,R, \Sigma_W, \gamma)$ where $\gamma$ is the discount factor. 

Under standard assumptions of stabilizability and detectability, it is known that the optimal value function is
\[
    V^\star(s) = s^\intercal P s + q,
\]
where $P$ is the unique positive semidefinite solution of the discounted Riccati equation 
\begin{equation}
    P =  Q + \gamma A^\intercal P A - \gamma^2 A^\intercal PB(R + \gamma B^\intercal P B)^{-1} B^\intercal P A,
\end{equation}
and  
\(
    q = \gamma \Tr(\Sigma_W P)/(1-\gamma). 
\) Furthermore, the optimal policy is given as $\pi^\star(s) = -K^\star s$ where $K^\star = \gamma ( R + \gamma  B^\intercal  P B)^{-1}B^\intercal  P A$ is the optimal gain matrix \cite{bertsekas2015dynamic}.

We consider two models, a true model $\ALPHABET M = (A, B, Q, R, \Sigma_W, \gamma)$ and an approximate model $\hat{\ALPHABET M}= (\hat A, \hat B, \hat Q, \hat R, \hat \Sigma_W, \gamma)$. We take the weight function to be $w(s) = 1 + \ell s^\intercal s$, where $\ell > 0$ is a parameter. Under standard conditions of stabilizability and detectability (see~\cite{bertsekas2015dynamic}), both models $\ALPHABET M$ and $\hat {\ALPHABET M}$ satisfy Assumption~\ref{assm:DP-solvability}. Let $P$ and $\hat P$ denote the solutions of the Riccati equations corresponding to models $\ALPHABET M$ and $\hat {\ALPHABET M}$, and let $\pi^\star(s) = -K^\star s$ and $\hat \pi^\star(s) = -\hat K^\star s$ denote the optimal policies of models ${\ALPHABET M}$ and  $\hat {\ALPHABET M}$.

 For any linear policy $\pi(s) = -Ks$, we use the notation $A_K = A - BK$ and $\hat A_K = \hat A - \hat B K$. We further use $K_{\mu^\star}$ to denote the gain matrix of the (unique)  policy  ${\mu^\star} \in \ALPHABET{G}(\hat V^\star)$. We impose the following assumption.
\begin{assumption}\label{ass:LQ}
    The models $\ALPHABET M$ and $\hat {\ALPHABET M}$ are such that
    \[
        b_\Sigma \coloneqq \max\bigl\{
        1 + \ell \Tr(\Sigma_W),
        1 + \ell \Tr(\hat \Sigma_W)
        \bigr\} \le \frac 1\gamma
    \]
    and
    \[
       b_\sigma \coloneqq \max\bigl\{
            \sigma_1^2(A_{K^\star}),
            \sigma_1^2(A_{\hat K^\star}),
            \sigma_1^2(\hat A_{\hat K^\star}),
            \sigma_1^2(A_{K_{\mu^\star}})
       \bigr\} \le \frac 1\gamma 
    \]
    where $\sigma_1(A)$ is the operator norm of $A$ (i.e., the largest singular value of $A$). 
\end{assumption}

\begin{lemma}\label{lem:LQ}
    Assumption~\ref{ass:LQ} implies Assumptions~\ref{assm:stability} and~\ref{ass:N1}.
\end{lemma}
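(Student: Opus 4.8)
The plan is to exploit the quadratic structure of the LQR value functions, which forces every one-step greedy policy appearing in Assumptions~\ref{assm:stability} and~\ref{ass:N1} to be \emph{linear}, and then to verify the two requirements of $(\tuple)$-stability from Definition~\ref{def:stability} directly for linear policies under the weight $w(s) = 1 + \ell\, s^\intercal s$.

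\textbf{Step 1 (identify the greedy policies).} Since $V^\star(s) = s^\intercal P s + q$ and $\hat V^\star(s) = s^\intercal\hat P s + \hat q$ are quadratic, a one-step lookahead $\inf_a\{c(s,a) + \gamma\EXP[v(As + Ba + W)]\}$ (or its hatted analogue) against such a $v$ reduces to minimizing a strictly convex quadratic in $a$ --- strict convexity because $R$, $\hat R$ are positive definite and $P$, $\hat P$ are positive semidefinite, so $R + \gamma B^\intercal\hat P B$ and its analogues are invertible --- whose unique minimizer is linear in $s$. This supplies the three policies we need: $\pi^\star(s) = -K^\star s \in \ALPHABET G(V^\star)$ and $\hat\pi^\star(s) = -\hat K^\star s \in \hat{\ALPHABET G}(\hat V^\star)$ are the stated optimal policies, and $\mu^\star(s) = -K_{\mu^\star}s$, with $K_{\mu^\star} = \gamma(R + \gamma B^\intercal\hat P B)^{-1}B^\intercal\hat P A$, is the (unique) element of $\ALPHABET G(\hat V^\star)$ already introduced in the text.

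\textbf{Step 2 (a single stability computation for linear policies).} Fix a linear policy $\pi(s) = -Ks$ and a model with dynamics $s_{t+1} = A's_t + B'a_t + W_t$, where $W_t$ has mean zero and covariance $\Sigma'$, and quadratic cost with matrices $Q'$, $R'$; set $A'_K = A' - B'K$. Then $c_\pi(s) = s^\intercal(Q' + K^\intercal R'K)s$ is a positive-semidefinite quadratic form, so $\| c_\pi \|_w \le \sigma_1(Q' + K^\intercal R'K)/\ell < \infty$, which is \eqref{eq:c-pi}. Next, since $\EXP[W_t] = 0$,
\begin{align*}
 \int_{\ALPHABET S} w(s')\, P_\pi(ds'\mid s)
 &= \EXP\bigl[\,1 + \ell\,(A'_Ks + W)^\intercal(A'_Ks + W)\,\bigr] \\
 &= 1 + \ell\,(A'_Ks)^\intercal(A'_Ks) + \ell\Tr(\Sigma')
 \le 1 + \ell\,\sigma_1^2(A'_K)\, s^\intercal s + \ell\Tr(\Sigma').
\end{align*}
Comparing with $\kappa w(s) = \kappa + \kappa\ell\, s^\intercal s$, and noting both sides are affine in $t = s^\intercal s \ge 0$, inequality \eqref{eq:kappa} holds for all $s$ precisely when $\kappa$ dominates the constant term and the slope, i.e.\ when $\kappa \ge \max\{1 + \ell\Tr(\Sigma'),\ \sigma_1^2(A'_K)\}$. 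So such a linear policy lies in the $(\tuple)$-stable set of that model as soon as $\kappa$ is at least this maximum.

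\textbf{Step 3 (assemble) and main obstacle.} Apply Step~2 four times: $\pi^\star = -K^\star s$ in $\ALPHABET M$ (closed loop $A_{K^\star}$); $\hat\pi^\star = -\hat K^\star s$ in $\ALPHABET M$ (closed loop $A_{\hat K^\star}$) and in $\hat{\ALPHABET M}$ (closed loop $\hat A_{\hat K^\star}$); and $\mu^\star = -K_{\mu^\star}s$ in $\ALPHABET M$ (closed loop $A_{K_{\mu^\star}}$). Taking $\kappa = \max\{b_\Sigma, b_\sigma\}$, Assumption~\ref{ass:LQ} makes $\gamma\kappa < 1$ and $\kappa$ dominates every maximum produced by these four applications; hence $\pi^\star \in \ALPHABET G(V^\star) \cap \Pi_S(\tuple)$, $\hat\pi^\star \in \hat{\ALPHABET G}(\hat V^\star) \cap \Pi_S(\tuple) \cap \hat\Pi_S(\tuple)$, and $\mu^\star \in \ALPHABET G(\hat V^\star) \cap \Pi_S(\tuple)$, which are exactly Assumptions~\ref{assm:stability} and~\ref{ass:N1}. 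The only non-mechanical part is Step~1 --- justifying that the one-step lookahead against a quadratic value function yields a \emph{well-defined} linear policy and that $\mu^\star$ is unique; after that, Step~2 is an elementary second-moment computation and Step~3 is bookkeeping matching each closed-loop matrix in Assumption~\ref{ass:LQ} to the right part of Assumptions~\ref{assm:stability}/\ref{ass:N1}. (The bounds $b_\Sigma, b_\sigma \le 1/\gamma$ in Assumption~\ref{ass:LQ} are read, consistently with the standing convention on $(\tuple)$, as giving $\gamma\kappa < 1$.)
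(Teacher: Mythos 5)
Your proof is correct and takes essentially the same route as the paper's: fix a linear policy, bound $\| c_\pi \|_w$ by $\rho(Q + K^\intercal R K)/\ell$, bound the smallest admissible $\kappa$ by $\max\{1 + \ell\Tr(\Sigma_W), \sigma_1^2(A_K)\}$ via the second-moment computation, and then invoke Assumption~\ref{ass:LQ} with $\kappa = \max\{b_\Sigma, b_\sigma\}$ for the four relevant closed-loop matrices. The additional detail you supply (the linearity and uniqueness of the greedy policies, and the explicit matching of each policy/model pair to a term of $b_\Sigma$ and $b_\sigma$) is left implicit in the paper but fully consistent with its argument.
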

\begin{proof}
   Fix a policy $\pi(s) = Ks$. Eq.~\eqref{eq:c-pi} is always satisfied because
   \[
      \| c_{\pi} \|_w = \sup_{s\in \ALPHABET S} \frac{s^\intercal (Q + K^\intercal R K) s}{ 1 + \ell s^\intercal s}
      \le \frac{1}{\ell} \rho(Q + K^\intercal R K) < \infty 
   \]
   where $\rho(\cdot)$ denotes the spectral radius of a matrix.
  \endgraf 
  Moreover largest value of $\kappa$ for which~\eqref{eq:kappa} is satisfied is given by~\eqref{eq:kappa-bound}, which simplifies to
    \begin{align*}
        \kappa_w &= \sup_{s\in \ALPHABET S}\frac{\EXP[w(s_{t+1}) | s_t = s]}{w(s)} \notag \\
        &= \sup_{s\in \ALPHABET S}\frac{1 + \ell \Tr(\Sigma_W) + \ell  s^\intercal A_K^\intercal A_K s}{
         1 + \ell  s^\intercal s
         } 
         \notag\\
         &\leq \max(1 + \ell\Tr(\Sigma_W), \sigma_1^2(A_K)).
    \end{align*}
    Thus, if Assumption~\ref{ass:LQ} holds, then Assumptions~\ref{assm:stability} and~\ref{ass:N1} hold with $\kappa \coloneqq \max\{b_\Sigma, b_\sigma\}$.
\end{proof}

Then, the result of Theorem~\ref{thm:alpha_beta}, part~2 simplifies as follows:
\begin{proposition} \label{prop:LQR}
   Under Assumptions~\ref{assm:DP-solvability} and~\ref{ass:LQ}, we have for $\alpha_1 = 1$ and any $\alpha_2$,
   \begin{align}
        \bigl\lVert V^{\hat \pi^\star} - V^\star \bigr\rVert_w 
       \le
       & 
       \frac{1}{1  - \gamma\kappa} \bigl[
         \max\{ \rho(D^\star) / \ell, |d_\Sigma + \alpha_2| \}
         \notag\\
         & \quad 
         + \max \{ \rho(D^{\hat \pi^\star}) / \ell, |d_\Sigma + \alpha_2| \}
    \bigr],
    \label{eq:LQ-bound}
   \end{align}
   where $\rho(\cdot)$ denotes the spectral radius of a matrix and 
    \begin{align}
        D^\star &= 
        \bigl(
    Q + \gamma A^\intercal \hat P A - \gamma^2 A^\intercal \hat PB(R + \gamma B^\intercal \hat P B)^{-1} B^\intercal \hat P A
    \bigr)
        \notag \\
        & \quad - \hat P
        \label{eq:Dstar}
        \displaybreak[1]
    \\
    D^{\hat \pi^\star} &= 
    \big(
    Q +  (\hat K^\star)^\intercal R \hat K^\star + \gamma A_{\hat K^\star}^\intercal \hat P A_{\hat K^\star}
    \big) 
    - \hat P
    \label{eq:Dpihatstar}
    \displaybreak[1]
    \\
    \hat K^\star &= \gamma (\hat R + \gamma \hat B^\intercal \hat P \hat B)^{-1}\hat B^\intercal \hat P \hat A,
    \label{eq:Khat}
    \displaybreak[1]
    \\
        \shortintertext{and}
    d_\Sigma & = \gamma \Tr((\Sigma_W - \hat \Sigma_W) \hat P).
    \label{eq:dsigma}
    \end{align}
By taking $\alpha_2 = -d_{\Sigma}$ we obtain
    \begin{equation}
        \bigl\lVert V^{\hat \pi^\star} - V^\star \bigr\rVert_w 
    \le 
       \frac{1}{\ell(1  - \gamma\kappa)} \bigl[
         \rho(D^\star) + \rho(D^{\hat \pi^\star})
       \bigr].
       \label{eq:LQ-bound-simple}
   \end{equation}
\end{proposition}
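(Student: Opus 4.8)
The plan is to specialize Theorem~\ref{thm:alpha_beta}, part~2, to $\boldsymbol{\alpha} = (1,\alpha_2)$ and then evaluate the two Bellman mismatch functionals it involves in closed form. First I would check the hypotheses of that theorem. Lemma~\ref{lem:LQ} shows that Assumption~\ref{ass:LQ} implies Assumptions~\ref{assm:stability} and~\ref{ass:N1} with $\kappa = \max\{b_\Sigma,b_\sigma\}$. Since $\alpha_1 = 1$, the transformed model $\ALPHABET M_{\boldsymbol{\alpha}}$ has the same dynamics as $\ALPHABET M$ and per-step cost $c(s,a) + \alpha_2$; adding the state-independent constant $\alpha_2$ does not change which action attains the infimum in the Bellman operators, so $\ALPHABET G_{\boldsymbol{\alpha}}(\hat V^\star) = \ALPHABET G(\hat V^\star)$ and hence Assumption~\ref{ass:N1} implies Assumption~\ref{ass:N1_alt}. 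Therefore Theorem~\ref{thm:alpha_beta}, part~2, gives
\[
  \bigl\| V^{\hat\pi^\star} - V^\star \bigr\|_w
  \le \frac{1}{1-\gamma\kappa}\Bigl[\, \MISMATCH^{\hat\pi^\star}_{\boldsymbol{\alpha}}\hat V^\star + \MISMATCH^{\star}_{\boldsymbol{\alpha}}\hat V^\star \,\Bigr].
\]

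Next I would compute the two mismatch terms. Since $\hat V^\star$ is the fixed point of $\hat\BELLMAN^\star = \hat\BELLMAN^{\hat\pi^\star}$, we have $\hat\BELLMAN^{\hat\pi^\star}\hat V^\star = \hat\BELLMAN^{\star}\hat V^\star = \hat V^\star$, so $\MISMATCH^{\hat\pi^\star}_{\boldsymbol{\alpha}}\hat V^\star = \|\BELLMAN^{\hat\pi^\star}_{\boldsymbol{\alpha}}\hat V^\star - \hat V^\star\|_w$ and $\MISMATCH^{\star}_{\boldsymbol{\alpha}}\hat V^\star = \|\BELLMAN^{\star}_{\boldsymbol{\alpha}}\hat V^\star - \hat V^\star\|_w$. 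Write $\hat V^\star(s) = s^\intercal\hat P s + \hat q$ with $\hat q = \gamma\Tr(\hat\Sigma_W\hat P)/(1-\gamma)$. For the first term, substitute $\hat\pi^\star(s) = -\hat K^\star s$ (with $\hat K^\star$ as in~\eqref{eq:Khat}, the standard optimal-gain identity for $\hat{\ALPHABET M}$) and take the expectation over the zero-mean noise with covariance $\Sigma_W$; the quadratic-in-$s$ part is exactly $s^\intercal D^{\hat\pi^\star} s$ with $D^{\hat\pi^\star}$ as in~\eqref{eq:Dpihatstar}, and the constant part, after using $\gamma\hat q - \hat q = -\gamma\Tr(\hat\Sigma_W\hat P)$, collapses to $d_\Sigma + \alpha_2$ with $d_\Sigma$ as in~\eqref{eq:dsigma}. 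For the second term, carry out the one-step minimization over $a$ in $\BELLMAN^{\star}_{\boldsymbol{\alpha}}$ by completing the square in the positive-definite quadratic form $a^\intercal(R + \gamma B^\intercal\hat P B)a + 2\gamma s^\intercal A^\intercal\hat P B a$; the quadratic-in-$s$ part is then $s^\intercal D^\star s$ with $D^\star$ as in~\eqref{eq:Dstar}, and the constant is again $d_\Sigma + \alpha_2$. Both $D^{\hat\pi^\star}$ and $D^\star$ are symmetric.

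It then remains to evaluate weighted norms of maps $s \mapsto s^\intercal D s + b$ with $D$ symmetric, $b \in \reals$, and $w(s) = 1 + \ell s^\intercal s$, namely to show
\[
  \sup_{s \in \reals^{n_s}} \frac{\bigl| s^\intercal D s + b \bigr|}{1 + \ell s^\intercal s} = \max\bigl\{\, |b|,\ \rho(D)/\ell \,\bigr\}.
\]
I would diagonalize $D$, reduce to the one-dimensional problem of bounding $(\mu R + b)/(1+\ell R)$ over $R = \|s\|^2 \ge 0$ and $\mu$ between the extreme eigenvalues of $D$; for fixed $R$ this is affine in $\mu$, so its modulus is maximized at $\mu = \lambda_{\min}(D)$ or $\mu = \lambda_{\max}(D)$, and for fixed $\mu$ the map $R \mapsto (\mu R + b)/(1+\ell R)$ is a monotone M\"obius function on $[0,\infty)$ whose modulus has supremum $\max\{|b|, |\mu|/\ell\}$; since $\rho(D)$ is the largest absolute eigenvalue of the symmetric matrix $D$, this yields the displayed identity. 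Applying it with $(D,b) = (D^{\hat\pi^\star}, d_\Sigma+\alpha_2)$ and $(D^\star, d_\Sigma+\alpha_2)$ and substituting into the bound above gives~\eqref{eq:LQ-bound}; finally, choosing $\alpha_2 = -d_\Sigma$ annihilates the $|d_\Sigma+\alpha_2|$ terms and reduces it to~\eqref{eq:LQ-bound-simple}.

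The main obstacle is the last weighted-norm computation: one must keep track of the absolute value so that both branches genuinely appear --- the $|b|$ branch from $s = 0$ and the $\rho(D)/\ell$ branch from letting $\|s\| \to \infty$ along the extremal eigendirections of $D$ --- rather than only the limiting behavior or only the value at the origin. The matrix identities for $D^{\hat\pi^\star}$ and $D^\star$ and for the common constant $d_\Sigma + \alpha_2$ are routine LQR algebra but require care with the discount-weighted additive constants.
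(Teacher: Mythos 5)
Your proposal is correct and follows essentially the same route as the paper's proof: specialize Theorem~\ref{thm:alpha_beta}, part~2, to $\boldsymbol{\alpha}=(1,\alpha_2)$, compute $\MISMATCH^{\hat\pi^\star}_{\boldsymbol{\alpha}}\hat V^\star$ and $\MISMATCH^{\star}_{\boldsymbol{\alpha}}\hat V^\star$ in closed form via the quadratic structure of $\hat V^\star$, and then bound $\sup_s |s^\intercal D s + b|/(1+\ell s^\intercal s)$. The only difference is cosmetic: you establish that last supremum as an exact equality via diagonalization and the M\"obius monotonicity argument, whereas the paper only states (and needs) the inequality $\le \max\{\rho(D)/\ell, |b|\}$.
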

See Appendix~\ref{append:lqr} for proof.
\begin{remark}
    Under Assumptions~\ref{assm:DP-solvability} and~\ref{ass:LQ}, the bound obtained in~\eqref{eq:LQ-bound-simple} does not depend on the Riccati solution $(P,K)$ of the true model $\ALPHABET M$.
\end{remark}

\begin{remark}
    Consider the case when $\hat {\ALPHABET M}$ is the same model as $\ALPHABET M$ except $\hat \Sigma_W = 0$. In this case, $D^\star = 0$ and $D^{\hat \pi^\star} = 0$. Therefore, from~\eqref{eq:LQ-bound-simple}, we get that $\| V^{\hat \pi^\star} - V^\star\|_{w} = 0$, which corresponds to the classical certainty equivalence principle of LQR control.
\end{remark}

\subsection{Advantage of using $(\tuple)$ stability of policy over $(\tuple)$ stability of model}\label{sec:pi_better}
As mentioned in Remark~\ref{rem:pi-better}, it is typically assumed in the literature that the model is $(\bar \kappa, \bar w)$ stable, while we impose a weaker assumption that certain policies are $(\kappa, w)$ stable. In this section, we illustrate two advantages of imposing the weaker assumption.

First, when the per-step cost is unbounded in the actions, as is the case for the LQR problem considered in Sec.~\ref{sec:LQR}, the model $\ALPHABET M$ is not $(\bar \kappa, \bar w)$ stable for any choice of weight function $\bar w$. However, as illustrated in Sec.~\ref{sec:LQR}, specific policies may be $(\kappa, w)$ stable for $w(s) = 1 + s^\intercal s$. Thus, imposing a weaker assumption of stability allows us to derive approximation bounds for a larger class of models.

Second, imposing a weak assumption of stability allows us to derive tighter approximation bounds. To illustrate this, we reconsider the 
inventory management problem in the setting of Sec.~\ref{sec:weight}. In Sec.~\ref{sec:weight}, we had computed the lower bounds of~\eqref{eq:IM-bound-w} when using $(\kappa,w)$ that satisfy Definition~\ref{def:stability}. Now, we consider $(\bar \kappa, \bar w)$ that satisfy Definition~\ref{def:model-stability} instead. In particular, consider 
a family of weight functions $\ALPHABET W$ as defined in~\eqref{eq:W}.
For each $\bar w\in \ALPHABET W$, we compute the smallest $\bar\kappa_{\bar w}$ such that \eqref{eq:stab_act} is satisfied as per~\eqref{eq:bar-kappa-bound}. The largest value of $\ell$ for which $\bar\kappa_{\bar w} < 1/{\gamma}$ is $\ell = 1.75 \cdot 10^{-4}$. 

We plot the corresponding lower bound given in~\eqref{eq:IM-bound-w} in Fig.~\ref{fig:bar-k-bound}. As can be seen from the plot, in this case the weight function $\bar w(s) \equiv 1$ (equivalent to the sup-norm) gives the tightest lower bound. But, as was seen by the bounds of Fig.~\ref{fig:w-bound}, the bounds obtained by weighted functions in class $\ALPHABET W$ were significantly tighter. This highlights the importance of imposing the weaker assumption of $(\tuple)$-stability of policy rather than the $(\bar \kappa, \bar w)$-stability of the model.

\begin{figure}[!t]
\centering
\begin{subfigure}{0.49\linewidth}
\includegraphics[width=\linewidth]{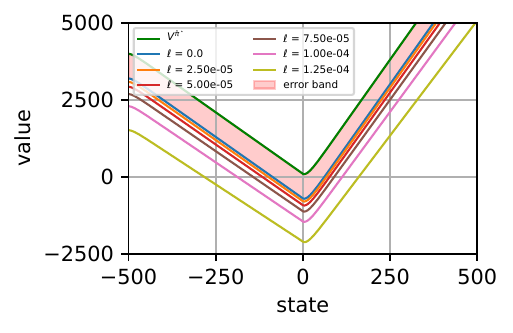}
\caption{entire state space}
\end{subfigure}\hfill
\begin{subfigure}{0.49\linewidth}
\includegraphics[width=\linewidth]{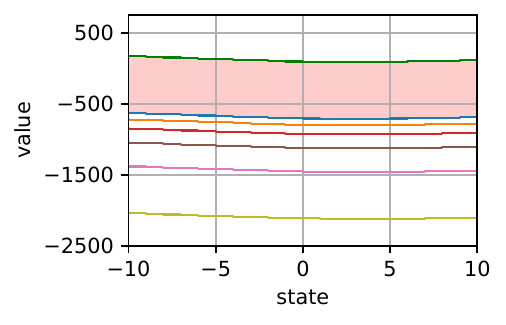}
\caption{zoomed in}
\end{subfigure}
\caption{Lower bounds obtained using stability of the model. Note that the curves corresponding to $\ell = 1.50 \cdot 10^{-4}$ and $\ell = 1.75\cdot 10^{-4}$ are not visible in both plots.}
\label{fig:bar-k-bound}
\end{figure}

\section{Integral probability metrics (IPM) and bounds based on distance between models} \label{sec:IPM}

In this section, we provide upper bounds for the results of Sec.~\ref{sec:main_results} that can be computed in terms of the \textit{distance} between models $\ALPHABET M$ and $\hat{\ALPHABET M}$. To define such a distance, we first start with the definition of integral probability metrics (IPM)~\cite{muller1997}.
\subsection{Integral probability metrics (IPM)}

\begin{definition}\label{def:IPM}
    Let $\F$ be a convex and balanced subset of $\ALPHABET V_w$. Then, 
    the IPM distance (w.r.t.\ $\F$) between two probability laws $\nu_1, \nu_2 \in \ALPHABET P_w$ is given by\footnote{Since $\nu_i \in \ALPHABET P_w$, $i \in \{1,2\}$, we have $\int f d\nu_i < \infty$
    for any $f \in \ALPHABET V_w$.}
    \[
        d_{\F}(\nu_1, \nu_2) = \sup_{f \in \mathfrak {F}}\biggr|\int f d\nu_1 - \int f d\nu_2 \biggl|.
    \]
\end{definition}
\begin{definition}
    In the setting of Definition~\ref{def:IPM}, the  Minkowski functional of any measurable function $f \in \ALPHABET V_w$ is defined as
    \[
        \rho_{\F}(f) = \inf\Bigl\{\rho \in \mathbb R_{>0}: \frac{f}{\rho}\in \F\Bigr\}.
    \]
\end{definition}
Note that if for every positive $\rho$, $f/\rho \not\in \F$, then $\rho_{\F}(f) = \infty$.

An immediate consequence of the above two definitions is that for any measurable function $f \in \ALPHABET V_w$,
\begin{equation}\label{eq:IPM_main}
    \biggl| \int f d\nu_1 - \int f d\nu_2 \biggr| \leq \rho_{\F}(f)d_{\F}(\nu_1,\nu_2).
\end{equation}
Many of the commonly used metrics on probability spaces are IPMs. For example

\begin{itemize}
    \item \textbf{Total variation distance}, denoted by $\dTV$, corresponds to $\F = \FTV \coloneqq \{ f \in \ALPHABET V_{w\equiv1} : \tfrac 12\Span(f) \le 1 \}$, where $\Span(f) = \sup(f) - \inf(f)$~\cite{muller1997,villani2009optimal}. In this case $\rho_{\F}(f) = \tfrac 12 \Span(f)$.
    \item \textbf{Wasserstein distance}. Suppose $(\ALPHABET S, d_{\ALPHABET S})$ is a metric space  
    Define $\FWasw \coloneqq \{ f \in \ALPHABET V_w : \Lip(f) \le 1 \}$ where  $\Lip(f)$ denotes the Lipschitz constant of a function $f$. 
    Then, for $\F = \FWasw$, Eq~\eqref{eq:IPM_main} holds for $\rho_{\F}(f) = \Lip(f)$. Moreover $d_{\F}(\nu_1, \nu_2) \le \dWas(\nu_1, \nu_2)$, where $\dWas$ is the Wasserstein distance~\cite{vaserstein1969markov,villani2009optimal}. 
    \item \textbf{Weighted total variation distance}, denoted by $\dTVw$, corresponds to $\F = \FTVw \coloneqq \{ f \in \ALPHABET V_w: \Osc_w(f) \le 1 \}$, where $\Osc_w(f) = \sup_{s,s' \in \ALPHABET S \times \ALPHABET S}| f(s) - f(s')|/(w(s) + w(s'))$~\cite{douc2018markov,hairer2011yet,meyn1992stability}. For this case, $\rho_{\F}(f) = \Osc_w(f)$. 
\end{itemize}

\subsection{Weighted distance between two MDP models}
Note that if a policy $\pi$ is $(\tuple)$ stable, then Eq.~\eqref{eq:kappa} implies that $P_\pi(\cdot | s) \in \ALPHABET P_w$ for every $s \in \ALPHABET S$. Therefore, Assumption~\ref{assm:stability} implies that
for all $s \in \ALPHABET S$,  $P_{\pi^\star}(\cdot | s), P_{\hat \pi^\star}(\cdot | s), \hat P_{\hat \pi^\star}(\cdot | s) \in \ALPHABET P_w$ and Assumption~\ref{assm:openloop} implies that for all $(s,a) \in \ALPHABET S \times \ALPHABET A$, we have  $P(\cdot | s,a), \hat P(\cdot | s,a) \in \ALPHABET P_w$.

We now define two notions of weighted distance between two MDP models.
\begin{definition}[Distance between MDP models]\label{def:model_distance}
    Given two MDP models $\ALPHABET M = \langle \ALPHABET S, \ALPHABET A, P, c, \gamma \rangle$ and $\hat{\ALPHABET M} = \langle \ALPHABET S, \ALPHABET A, \hat P, \hat c, \gamma \rangle$, a weight function $w: \ALPHABET S \to [1,\infty)$ and an IPM $d_{\F}$ as defined in Definition~\ref{def:IPM}, we define the following
    
    \begin{enumerate}
        \item \textbf{Distance between models for given policies:} Given deterministic policies $\pi$ for model $\ALPHABET M$ and $\hat\pi$ for model $\hat{\ALPHABET M}$ such that $P_\pi(\cdot | s), \hat P_{\hat \pi}(\cdot | s) \in \ALPHABET P_w$ for all $s \in \ALPHABET S$, define
        \begin{align*}
            \varepsilon_{\boldsymbol{\alpha}}(\pi,\hat\pi) 
            &\coloneqq \sup_{s \in \ALPHABET S}\frac{\bigl|\alpha_1c_{\pi}(s)+\alpha_2-\hat c_{\hat\pi}(s) \bigr|}{w(s)}
        \end{align*}
        and 
        \[
            \delta_{\mathfrak F}(\pi,\hat\pi) \coloneqq
            \sup_{s \in \ALPHABET S}\frac{d_{\mathfrak F}\bigl(P_{\pi}(\cdot|s\bigr),\hat P_{\hat\pi}\bigl(\cdot|s)\bigr)}{w(s)}.
        \]
        
        \item \textbf{Maximal distance between models}: Under Assumption~\ref{assm:openloop}, define
        \[
            \varepsilon^{\max}_{\boldsymbol{\alpha}} \coloneqq \sup_{(s,a)\in \ALPHABET S \times \ALPHABET A} \frac{\left|\alpha_1c(s,a)+\alpha_2-\hat c(s,a) \right|}{w(s)}
        \]
        and
        \[
            \delta_{\mathfrak F}^{\max} \coloneqq \sup_{(s,a)\in \ALPHABET S \times \ALPHABET A} 
            \frac{d_{\mathfrak F}\bigl(P(\cdot|s,a),\hat P(\cdot|s,a)\bigr)}{w(s)}.
        \]
    \end{enumerate}
\end{definition}

Note that the distances defined above depend on the weight function $w$, but we don't explicitly capture that dependence in the notation.

\subsection{IPM based approximation bounds}

\begin{lemma}\label{lem:mismatch_bounds}
 We have the following bounds for different mismatch functionals:
    \begin{enumerate}
    \setlength\itemsep{1em}
        \item If policies $\pi$ and $\hat\pi$ are such that for all $s \in \ALPHABET S$, $P_{\pi}(\cdot|s)$, $\hat P_{\pi}(\cdot|s)$, $P_{\hat \pi}(\cdot|s) \in \ALPHABET P_w$, then for all $v \in \ALPHABET V_w$,
        
        \[\MISMATCH^{\pi,\hat\pi}_{\boldsymbol{\alpha}}v \leq 
        \varepsilon_{\boldsymbol{\alpha}}(\pi,\hat\pi) + \gamma \rho_{\mathfrak F}(v)\delta_{\mathfrak F}(\pi,\hat\pi).\]
        
        \item If Assumption~\ref{ass:N1_alt} is satisfied, 
        \begin{equation*}
            \MISMATCH^{\star}_{\boldsymbol{\alpha}}\hat V^\star 
            \leq \varepsilon_{\boldsymbol{\alpha}}(\mu^\star,\hat \pi^\star) + \gamma \rho_{\mathfrak F}(\hat V^\star)\delta_{\mathfrak F}(\mu^\star,\hat \pi^\star),
        \end{equation*}
        for all $\mu^\star \in \ALPHABET G_{\boldsymbol{\alpha}}(\hat V^\star) \cap  \Pi_S(\tuple)$ (see Assumption~\ref{ass:N1_alt}).
        \item If Assumption~\ref{ass:N2_alt} is satisfied, 
        \begin{equation*}
            \MISMATCH^{\star}_{\boldsymbol{\alpha}} (\alpha_1 V^\star) 
            \leq \varepsilon_{\boldsymbol{\alpha}}(\pi^\star,\hat \mu^\star) + \alpha_1 \gamma \rho_{\mathfrak F}(V^\star)\delta_{\mathfrak F}(\pi^\star,\hat \mu^\star),
        \end{equation*}
        for all $\hat\mu^\star \in \hat {\ALPHABET G}(V^\star_{\boldsymbol{\alpha}}) \cap  \hat \Pi_S(\tuple)$ (see Assumption~\ref{ass:N2_alt}).
        
        \item If Assumption~\ref{assm:openloop} is satisfied, then for all $v \in \ALPHABET V_w$,
        \[
        \MISMATCH^{\max}_{\boldsymbol{\alpha}}v \leq 
        \varepsilon^{\max}_{\boldsymbol{\alpha}} + \gamma \rho_{\mathfrak F}(v)\delta_{\mathfrak F}^{\max}.
        \]
    \end{enumerate}
\end{lemma}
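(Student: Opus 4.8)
The plan is to prove part~1 directly from the IPM inequality~\eqref{eq:IPM_main} and then derive parts~2--4 as corollaries by rewriting the relevant Bellman\emph{-optimality} mismatch as a two-policy mismatch of the type handled in part~1.

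\emph{Part 1.} Since $\ALPHABET M_{\boldsymbol\alpha}$ has per-step cost $\alpha_1 c + \alpha_2$ and the same dynamics $P$ as $\ALPHABET M$, for every $s \in \ALPHABET S$,
\[
  [\BELLMAN^{\pi}_{\boldsymbol\alpha} v](s) - [\hat\BELLMAN^{\hat\pi} v](s)
  = \bigl(\alpha_1 c_\pi(s) + \alpha_2 - \hat c_{\hat\pi}(s)\bigr)
    + \gamma\Bigl(\int v\,dP_\pi(\cdot|s) - \int v\,d\hat P_{\hat\pi}(\cdot|s)\Bigr).
\]
Divide by $w(s)$ and apply the triangle inequality: the supremum over $s$ of the first term is $\varepsilon_{\boldsymbol\alpha}(\pi,\hat\pi)$ by Definition~\ref{def:model_distance}, and for the second term apply~\eqref{eq:IPM_main} with $\nu_1 = P_\pi(\cdot|s)$, $\nu_2 = \hat P_{\hat\pi}(\cdot|s)$ (both in $\ALPHABET P_w$ by the stated hypothesis) and $f = v$, giving $|\int v\,dP_\pi(\cdot|s) - \int v\,d\hat P_{\hat\pi}(\cdot|s)| \le \rho_{\F}(v)\,d_{\F}(P_\pi(\cdot|s),\hat P_{\hat\pi}(\cdot|s))$; dividing by $w(s)$ and taking the supremum bounds it by $\rho_{\F}(v)\,\delta_{\F}(\pi,\hat\pi)$. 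Summing the two bounds yields the claim.

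\emph{Parts 2 and 3.} The key point is that the Bellman optimality operators, evaluated at the value function in question, coincide pointwise with Bellman operators of suitable greedy policies, so the optimality mismatch functional equals a two-policy mismatch functional to which part~1 applies. For part~2: $\mu^\star \in \ALPHABET G_{\boldsymbol\alpha}(\hat V^\star)$ gives $\BELLMAN^\star_{\boldsymbol\alpha}\hat V^\star = \BELLMAN^{\mu^\star}_{\boldsymbol\alpha}\hat V^\star$, and the standing assumption $\hat\pi^\star \in \hat{\ALPHABET G}(\hat V^\star)$ gives $\hat\BELLMAN^\star\hat V^\star = \hat\BELLMAN^{\hat\pi^\star}\hat V^\star$; hence $\MISMATCH^\star_{\boldsymbol\alpha}\hat V^\star = \MISMATCH^{\mu^\star,\hat\pi^\star}_{\boldsymbol\alpha}\hat V^\star$, and part~1 applies since $\mu^\star \in \Pi_S(\tuple)$ (Assumption~\ref{ass:N1_alt}) and $\hat\pi^\star \in \hat\Pi_S(\tuple)$ put the induced kernels in $\ALPHABET P_w$ (cf.\ the remark opening Section~\ref{sec:IPM}). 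For part~3, using Lemma~\ref{lem:alpha-beta}(2) (so $V^\star_{\boldsymbol\alpha} = \alpha_1 V^\star + \alpha_2/(1-\gamma)$), the DP equation $\BELLMAN^\star V^\star = V^\star$, the standing assumption $\pi^\star \in \ALPHABET G(V^\star)$, and invariance of one-step greediness in $\hat{\ALPHABET M}$ under adding a constant to the value function, one checks $\BELLMAN^\star_{\boldsymbol\alpha}(\alpha_1 V^\star) = \BELLMAN^{\pi^\star}_{\boldsymbol\alpha}(\alpha_1 V^\star)$ and $\hat\BELLMAN^\star(\alpha_1 V^\star) = \hat\BELLMAN^{\hat\mu^\star}(\alpha_1 V^\star)$ for any $\hat\mu^\star \in \hat{\ALPHABET G}(V^\star_{\boldsymbol\alpha})$; thus $\MISMATCH^\star_{\boldsymbol\alpha}(\alpha_1 V^\star) = \MISMATCH^{\pi^\star,\hat\mu^\star}_{\boldsymbol\alpha}(\alpha_1 V^\star)$, and part~1 combined with positive homogeneity of the Minkowski functional, $\rho_{\F}(\alpha_1 V^\star) = \alpha_1\rho_{\F}(V^\star)$, gives the stated inequality (the integrability preconditions hold since $\pi^\star \in \Pi_S(\tuple)$ and $\hat\mu^\star \in \hat\Pi_S(\tuple)$ by Assumption~\ref{ass:N2_alt}).

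\emph{Part 4.} Fix $a \in \ALPHABET A$ and apply part~1 with $\pi = \hat\pi = \pi_a$; the integrability precondition holds because Assumption~\ref{assm:openloop} puts $P(\cdot|s,a),\hat P(\cdot|s,a) \in \ALPHABET P_w$. Since $c_{\pi_a}(s) = c(s,a)$ and $P_{\pi_a}(\cdot|s) = P(\cdot|s,a)$, we get $\varepsilon_{\boldsymbol\alpha}(\pi_a,\pi_a) \le \varepsilon^{\max}_{\boldsymbol\alpha}$ and $\delta_{\F}(\pi_a,\pi_a) \le \delta_{\F}^{\max}$, so $\MISMATCH^{\pi_a}_{\boldsymbol\alpha}v \le \varepsilon^{\max}_{\boldsymbol\alpha} + \gamma\rho_{\F}(v)\delta_{\F}^{\max}$ uniformly in $a$; taking the supremum over $a$ puts $\MISMATCH^{\max}_{\boldsymbol\alpha}v$ on the left and the same bound on the right. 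The routine parts are part~1 and the supremum manipulation in part~4; the step needing the most care is the chain of pointwise identities behind parts~2 and~3 — in particular verifying that $\pi^\star$ (resp.\ $\hat\mu^\star$) is one-step greedy for $\alpha_1 V^\star$ in $\ALPHABET M_{\boldsymbol\alpha}$ (resp.\ $\hat{\ALPHABET M}$), which rests on how greedy sets transform under positive affine changes of the cost/value function together with the DP equation — and checking that part~1's integrability hypotheses hold in each instantiation.
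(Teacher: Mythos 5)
Your proposal is correct and follows essentially the same route as the paper: part~1 by the triangle inequality plus the IPM inequality~\eqref{eq:IPM_main}, parts~2 and~3 by identifying the optimality mismatch with a two-policy mismatch via greedy policies (the paper states these identities in one line; you justify them in more detail, correctly), and part~4 by reducing to the open-loop policies $\pi_a$ and taking a supremum, which is equivalent to the paper's direct computation with $\Xi^{(s,a)}_{\boldsymbol{\alpha}}$.
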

See Appendix~\ref{app:mismatch_bounds} for proof.

Substituting the results of Lemma~\ref{lem:mismatch_bounds} in Theorem~\ref{thm:alpha_beta} and Theorem~\ref{thm:alpha_beta_openloop}, we obtain the following:

\begin{theorem}\label{thm:weighted_IPM}
    We have the following bounds on $V^{\hat \pi^\star} - V^\star$:
    \begin{enumerate}
        \item Under Assumptions~\ref{assm:DP-solvability} and~\ref{assm:stability}, we have
        \begin{align*}
                \hskip -1em \bigl\| V^{\hat \pi^\star} - V^\star \bigr\|_w &\leq 
                \frac{1}{\alpha_1(1-\gamma\kappa)}\biggl[ \varepsilon_{\boldsymbol{\alpha}}(\hat\pi^\star,\hat\pi^\star) +\varepsilon_{\boldsymbol{\alpha}}(\pi^\star,\hat\pi^\star)   \\ 
                &\quad +\gamma \rho_{\F}(\hat V^\star) \bigl( \delta_{\mathfrak F}(\hat\pi^\star,\hat\pi^\star) 
                +\delta_{\mathfrak F}(\pi^\star,\hat\pi^\star)\bigr)  \biggr]
        \end{align*}
        and 
        \begin{align*}
            &
            \bigl\| V^{\hat \pi^\star} - V^\star \bigr\|_w \\
            & \leq
            \frac{1}{\alpha_1(1-\gamma\kappa)}\bigl[\varepsilon_{\boldsymbol{\alpha}}(\hat\pi^\star,\hat\pi^\star) 
             +\alpha_1\gamma\rho_{\mathfrak F}(V^\star)\delta_{\mathfrak F}(\hat\pi^\star,\hat\pi^\star)\bigr] \\
            & + \frac{1+\gamma\kappa}{\alpha_1(1-\gamma\kappa)^2}\bigl[\varepsilon_{\boldsymbol{\alpha}}(\pi^\star,\hat\pi^\star) 
             + \alpha_1\gamma\rho_{\mathfrak F}(V^\star)\delta_{\mathfrak F}(\pi^\star,\hat\pi^\star)\bigr].
        \end{align*}
        \item Under Assumptions~\ref{assm:DP-solvability},~\ref{assm:stability} and~\ref{ass:N1_alt}, we have
        \begin{align*}
                \hskip -1em \bigl\| V^{\hat \pi^\star} - V^\star \bigr\|_w &\leq 
                \frac{1}{\alpha_1(1-\gamma\kappa)}\biggl[ \varepsilon_{\boldsymbol{\alpha}}(\hat\pi^\star,\hat\pi^\star) +\varepsilon_{\boldsymbol{\alpha}}(\mu^\star,\hat\pi^\star)   \\ 
                &\quad +\gamma \rho_{\F}(\hat V^\star) \bigl( \delta_{\mathfrak F}(\hat\pi^\star,\hat\pi^\star) 
                +\delta_{\mathfrak F}(\mu^\star,\hat\pi^\star)\bigr)  \biggr]
        \end{align*}
        for all $\mu^\star \in \ALPHABET G_{\boldsymbol{\alpha}}(\hat V^\star) \cap  \Pi_S(\tuple)$ (see Assumption~\ref{ass:N1_alt}).
        \item Under Assumptions~\ref{assm:DP-solvability},~\ref{assm:stability} and~\ref{ass:N2_alt}, we have
            \begin{align*}
            &
            \bigl\| V^{\hat \pi^\star} - V^\star \bigr\|_w \\
            & \leq
            \frac{1}{\alpha_1(1-\gamma\kappa)}\bigl[\varepsilon_{\boldsymbol{\alpha}}(\hat\pi^\star,\hat\pi^\star) 
             +\alpha_1\gamma\rho_{\mathfrak F}(V^\star)\delta_{\mathfrak F}(\hat\pi^\star,\hat\pi^\star)\bigr] \\
            & + \frac{1+\gamma\kappa}{\alpha_1(1-\gamma\kappa)^2}\bigl[\varepsilon_{\boldsymbol{\alpha}}(\pi^\star,\hat\mu^\star) 
             + \alpha_1\gamma\rho_{\mathfrak F}(V^\star)\delta_{\mathfrak F}(\pi^\star,\hat\mu^\star)\bigr].
        \end{align*}
        for all $\hat\mu^\star \in \hat {\ALPHABET G}(V^\star_{\boldsymbol{\alpha}}) \cap  \hat \Pi_S(\tuple)$ (see Assumption~\ref{ass:N2_alt}).
        \item Under Assumptions~\ref{assm:DP-solvability},~\ref{assm:stability} and~\ref{assm:openloop}, we have
        \[
            \bigl\| V^{\hat \pi^\star} - V^\star \bigr\|_w \leq \frac{2}{\alpha_1(1-\gamma\kappa)}
            \Bigl[ \varepsilon_{\boldsymbol{\alpha}}^{\max} + \gamma \rho_{\F}(\hat V^\star) \delta_{\F}^{\max} \Bigr]
        \]      
        and 
        \begin{align*}
            \hskip -1em \bigl\| V^{\hat \pi^\star} - V^\star \bigr\|_w \leq&
            \frac{2} {\alpha_1(1-\gamma\kappa)^2} \Bigl[ \varepsilon_{\boldsymbol{\alpha}}^{\max} + \alpha_1\gamma \rho_{\F}(V^\star) \delta_{\F}^{\max} \Bigr]. 
        \end{align*}
        
    \end{enumerate}
\end{theorem}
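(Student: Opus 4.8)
The plan is to derive each of the four parts of Theorem~\ref{thm:weighted_IPM} by substituting the IPM estimates of Lemma~\ref{lem:mismatch_bounds} into the abstract bounds already established in Theorem~\ref{thm:alpha_beta} (for parts~1--3) and Theorem~\ref{thm:alpha_beta_openloop} (for part~4). No new analytic tool is needed: the only facts used beyond the two source theorems and the mismatch lemma are that the relevant one-step transition kernels lie in $\ALPHABET P_w$ (so that Lemma~\ref{lem:mismatch_bounds} applies) and that the Minkowski functional is positively homogeneous, $\rho_{\F}(\lambda v)=\lambda\,\rho_{\F}(v)$ for $\lambda>0$, which is immediate from its definition and is what lets us rewrite $\gamma\rho_{\F}(\alpha_1 V^\star)$ as $\alpha_1\gamma\rho_{\F}(V^\star)$.

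For part~1 I would take the two bounds of Theorem~\ref{thm:alpha_beta}, part~1. In the first, both $\MISMATCH_{\boldsymbol\alpha}^{\hat\pi^\star}\hat V^\star=\MISMATCH_{\boldsymbol\alpha}^{\hat\pi^\star,\hat\pi^\star}\hat V^\star$ and $\MISMATCH_{\boldsymbol\alpha}^{\pi^\star,\hat\pi^\star}\hat V^\star$ are of the form covered by Lemma~\ref{lem:mismatch_bounds}, part~1, applied with the policy pairs $(\hat\pi^\star,\hat\pi^\star)$ and $(\pi^\star,\hat\pi^\star)$ and with $v=\hat V^\star$; the required memberships $P_{\pi^\star}(\cdot\mid s),\,P_{\hat\pi^\star}(\cdot\mid s),\,\hat P_{\hat\pi^\star}(\cdot\mid s)\in\ALPHABET P_w$ are precisely the ones recorded in Sec.~\ref{sec:IPM} as consequences of Assumption~\ref{assm:stability}. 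Substituting yields the first displayed inequality. For the second bound of Theorem~\ref{thm:alpha_beta}, part~1, the same lemma is applied with $v=\alpha_1 V^\star$, and the homogeneity identity above turns the resulting $\gamma\rho_{\F}(\alpha_1 V^\star)\,\delta_{\F}(\cdot,\cdot)$ terms into the stated $\alpha_1\gamma\rho_{\F}(V^\star)\,\delta_{\F}(\cdot,\cdot)$ terms.

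Parts~2 and~3 proceed identically, starting from parts~2 and~3 of Theorem~\ref{thm:alpha_beta}, the only difference being that the $\MISMATCH_{\boldsymbol\alpha}^{\star}$ terms are bounded not by part~1 but by parts~2 and~3 of Lemma~\ref{lem:mismatch_bounds}: under Assumption~\ref{ass:N1_alt}, $\MISMATCH_{\boldsymbol\alpha}^{\star}\hat V^\star\le\varepsilon_{\boldsymbol\alpha}(\mu^\star,\hat\pi^\star)+\gamma\rho_{\F}(\hat V^\star)\delta_{\F}(\mu^\star,\hat\pi^\star)$ for every $\mu^\star\in\ALPHABET G_{\boldsymbol\alpha}(\hat V^\star)\cap\Pi_S(\tuple)$, and under Assumption~\ref{ass:N2_alt}, $\MISMATCH_{\boldsymbol\alpha}^{\star}(\alpha_1 V^\star)\le\varepsilon_{\boldsymbol\alpha}(\pi^\star,\hat\mu^\star)+\alpha_1\gamma\rho_{\F}(V^\star)\delta_{\F}(\pi^\star,\hat\mu^\star)$ for every $\hat\mu^\star\in\hat{\ALPHABET G}(V^\star_{\boldsymbol\alpha})\cap\hat\Pi_S(\tuple)$; the residual $\MISMATCH_{\boldsymbol\alpha}^{\hat\pi^\star}(\cdot)$ terms are again handled by part~1 of the lemma exactly as above. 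For part~4 one starts from the two bounds of Theorem~\ref{thm:alpha_beta_openloop} and replaces $\MISMATCH^{\max}_{\boldsymbol\alpha}(\hat V^\star)$ and $\MISMATCH^{\max}_{\boldsymbol\alpha}(\alpha_1 V^\star)$ using part~4 of Lemma~\ref{lem:mismatch_bounds}, once more invoking homogeneity for the $\alpha_1 V^\star$ term; here the integrability hypothesis needed by the lemma, namely $P(\cdot\mid s,a),\,\hat P(\cdot\mid s,a)\in\ALPHABET P_w$ for all $(s,a)$, is the one noted in Sec.~\ref{sec:IPM} to follow from Assumption~\ref{assm:openloop}.

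The argument has no real analytic obstacle; the work is purely bookkeeping, and the place to be careful is matching, in each of the four parts, each mismatch functional on the right-hand side of Theorem~\ref{thm:alpha_beta}/Theorem~\ref{thm:alpha_beta_openloop} with the correct case of Lemma~\ref{lem:mismatch_bounds}, feeding it the right policy pair and the right evaluation point $v$, and checking that the corresponding one-step kernels belong to $\ALPHABET P_w$. Since all of those membership facts are already available and the only algebra is the scaling identity for $\rho_{\F}$, nothing further is required.
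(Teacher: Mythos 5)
Your proposal is correct and is exactly the paper's argument: the paper proves Theorem~\ref{thm:weighted_IPM} precisely by substituting Lemma~\ref{lem:mismatch_bounds} into Theorems~\ref{thm:alpha_beta} and~\ref{thm:alpha_beta_openloop}, matching the same policy pairs and evaluation points you describe. Your explicit notes on the $\ALPHABET P_w$ membership conditions and the positive homogeneity $\rho_{\F}(\alpha_1 V^\star)=\alpha_1\rho_{\F}(V^\star)$ simply spell out bookkeeping the paper leaves implicit.
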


The bounds of Theorem~\ref{thm:weighted_IPM} may be specialized for specific choices of IPMs. We present these bounds in terms of $(\varepsilon_{\boldsymbol{\alpha}}^{\max}, \delta_{\F}^{\max})$ and $\hat V^\star$. The bounds in terms of 
$(\varepsilon_{\boldsymbol{\alpha}}(\pi^\star,\hat\pi^\star), \delta_{\mathfrak F}(\pi^\star,\hat\pi^\star))$ etc.\ and/or $V^{\star}$ can be expressed in a similar manner.

\begin{corollary}\label{cor:AIS_sup}
    Under Assumptions~\ref{assm:DP-solvability},~\ref{assm:stability} and~\ref{assm:openloop}, we have the following bounds on $V^{\hat \pi^\star} - V^\star$:
    \begin{enumerate}
        \item Bound in terms of total-variation distance:
        \begin{align*}
          \bigl\| V^{\hat \pi^\star} - V^\star \bigr\|_{w}
          &\le
          \frac{2}{\alpha_1(1 - \gamma\kappa) }
          \biggl[ \varepsilon_{\boldsymbol{\alpha}}^{\max} \\
          &  \hskip -2em
          + \gamma \sup_{\substack{ s \in \ALPHABET S \\ a \in \ALPHABET A}}
          \frac{\dTV (P (\cdot | s,a), \hat P(\cdot | s,a))}{w(s)} \, \frac{\Span(\hat V^\star)}{2}
          \biggr].
        \end{align*}
        \item Bound in terms of Wasserstein distance:
        \begin{align*}
          \bigl\| V^{\hat \pi^\star} - V^\star \bigr\|_w
          &\le
          \frac{2}{\alpha_1(1 - \gamma\kappa) }
          \biggl[ \varepsilon_{\boldsymbol{\alpha}}^{\max} \\
          &  \hskip -2em
          + \gamma \sup_{\substack{ s \in \ALPHABET S \\ a \in \ALPHABET A}}
          \frac{\dWas(P (\cdot | s,a), \hat P(\cdot | s,a))}{w(s)} \, \Lip(\hat V^\star)
          \biggr].
        \end{align*}
        \item Bound in terms of weighted total variation distance:
        \begin{align*}
          \bigl\| V^{\hat \pi^\star} - V^\star \bigr\|_{w}
          &\le
          \frac{2}{\alpha_1(1 - \gamma\kappa) }
          \biggl[ \varepsilon_{\boldsymbol{\alpha}}^{\max} \\
          &  \hskip -2em
          + \gamma \sup_{\substack{ s \in \ALPHABET S \\ a \in \ALPHABET A}}
          \frac{\dTVw (P (\cdot | s,a), \hat P(\cdot | s,a))}{w(s)} \, \Osc_w(\hat V^\star)
          \biggr].
        \end{align*}
    \end{enumerate}
\end{corollary}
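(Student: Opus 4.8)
The plan is to derive Corollary~\ref{cor:AIS_sup} directly from Theorem~\ref{thm:weighted_IPM}, part~4 (the bounds in terms of $\hat V^\star$), by specializing the abstract IPM $d_{\F}$ to each of the three concrete metrics listed in Section~\ref{sec:IPM}. The first observation is that for each of the three cases the maximal transition-kernel distance $\delta_{\F}^{\max}$ and the Minkowski functional $\rho_{\F}(\hat V^\star)$ have explicit closed forms: for total variation, $\F = \FTV$ so that $\rho_{\F}(\hat V^\star) = \tfrac12\Span(\hat V^\star)$ and $d_{\F}(P(\cdot|s,a),\hat P(\cdot|s,a)) = \dTV(P(\cdot|s,a),\hat P(\cdot|s,a))$; for Wasserstein, $\F = \FWasw$ so that $\rho_{\F}(\hat V^\star) = \Lip(\hat V^\star)$ and $d_{\F}(\cdot,\cdot) \le \dWas(\cdot,\cdot)$; and for weighted total variation, $\F = \FTVw$ so that $\rho_{\F}(\hat V^\star) = \Osc_w(\hat V^\star)$ and $d_{\F}(\cdot,\cdot) = \dTVw(\cdot,\cdot)$. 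These are exactly the identities recorded in the bulleted list following Eq.~\eqref{eq:IPM_main}.

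The second step is to substitute these identities into the first inequality of Theorem~\ref{thm:weighted_IPM}, part~4, namely
\[
    \bigl\| V^{\hat \pi^\star} - V^\star \bigr\|_w \leq \frac{2}{\alpha_1(1-\gamma\kappa)}
    \Bigl[ \varepsilon_{\boldsymbol{\alpha}}^{\max} + \gamma \rho_{\F}(\hat V^\star)\, \delta_{\F}^{\max} \Bigr],
\]
together with the definition $\delta_{\F}^{\max} = \sup_{(s,a)} d_{\F}(P(\cdot|s,a),\hat P(\cdot|s,a))/w(s)$ from Definition~\ref{def:model_distance}. For the total-variation and weighted-total-variation cases this is an exact substitution, since $d_{\F} = \dTV$ (resp.\ $\dTVw$) and the Minkowski functional equals $\tfrac12\Span$ (resp.\ $\Osc_w$), yielding parts~1 and~3 verbatim. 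For the Wasserstein case one additionally uses the bound $d_{\F}(\nu_1,\nu_2) \le \dWas(\nu_1,\nu_2)$ to replace $\delta_{\F}^{\max}$ by the corresponding supremum of $\dWas(P(\cdot|s,a),\hat P(\cdot|s,a))/w(s)$, which only weakens the inequality, and $\rho_{\F}(\hat V^\star) = \Lip(\hat V^\star)$, giving part~2.

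There is essentially no obstacle here: the corollary is a direct specialization, and the only point requiring a word of care is that for the Wasserstein case the substitution goes through an inequality ($d_{\FWasw} \le \dWas$) rather than an equality, so one must note monotonicity of the right-hand side of the Theorem bound in $\delta_{\F}^{\max}$ (which is immediate since $\gamma$, $\rho_{\F}(\hat V^\star)$, and the prefactor are all nonnegative). One should also remark that in each case $\hat V^\star \in \ALPHABET V_w$ (for TV we need $\hat V^\star$ to have finite span; for Wasserstein, finite Lipschitz constant; for weighted TV, finite weighted oscillation), so that the relevant Minkowski functional is finite and the bound is non-vacuous — but these are exactly the conditions under which the named IPM quantities are defined, so no extra hypothesis is introduced. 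The proof is therefore a two-line argument: invoke Theorem~\ref{thm:weighted_IPM}, part~4, and plug in the three $(\rho_{\F}, d_{\F})$ pairs from Section~\ref{sec:IPM}.
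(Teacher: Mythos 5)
Your proposal is correct and matches the paper's own (essentially one-line) justification: the corollary is obtained by specializing Theorem~\ref{thm:weighted_IPM}, part~4, to $\F = \FTV$, $\F = \FWasw$, and $\F = \FTVw$, using the recorded values of $\rho_{\F}$ and, in the Wasserstein case, the inequality $d_{\FWasw} \le \dWas$. Your extra remarks on monotonicity in $\delta_{\F}^{\max}$ and finiteness of the Minkowski functionals are harmless refinements of the same argument.
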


Parts 1 and 2 of Corollary~\ref{cor:AIS_sup} may be viewed as weighted generalization of approximation results presented in~\cite{muller1997,abel2016near,gelada2019deepmdp} (some of those results assumed that the approximate model has a smaller state space than the original model).

\subsection{Performance loss in using certainty equivalent control}

Certainty equivalence refers to the following design methodology to determine a control policy for a stochastic control problem. Replace the random variables  in the stochastic control problem by their (conditional) expectations, solve the resulting deterministic control problem to determine a feedback control policy, and use the resulting \emph{certainty equivalent control policy} in the original stochastic system~\cite{simon1956dynamic,theil1957note}. It is well known that for systems with linear dynamics and quadratic cost (LQ problems), certainty equivalent control policies are optimal. But this is not the case in general. In this section, we use the results of Theorem~\ref{thm:weighted_IPM} to characterize the performance loss when using certainty equivalence for general dynamics with additive noise. 

Consider a system with state space $\reals^n$, action space $\reals^m$, and dynamics 
\begin{equation}\label{eq:stochastic}
    S_{t+1} = f(S_t, A_t) + N_t
\end{equation}
where $f$ is a measurable function and $\{N_t\}_{t \ge 1}$ is a zero-mean i.i.d.\ noise sequence with control law $\nu_N$. The per-step cost is given by $c(S_t, A_t)$. 

Now consider a deterministic model obtained by assuming that the  noise sequence in~\eqref{eq:stochastic} takes its expected value, i.e., the dynamics are 
\begin{equation}\label{eq:deterministic}
    S_{t+1} = f(S_t, A_t).
\end{equation}
The per-step cost is the same as before. 

Let $\ALPHABET M$ denote the stochastic model and $\hat {\ALPHABET M}$ denote the deterministic model. Then, the certainty equivalent design is to use the control policy $\hat \pi^\star$ in original stochastic model $\ALPHABET M$. Suppose Assumptions~\ref{assm:DP-solvability}, \ref{assm:stability}, and~\ref{assm:openloop} are satisfied for some $(\tuple)$. We use the Wasserstein distance based bounds in Corollary~\ref{cor:AIS_sup} to bound $\| V^{\hat \pi^\star} - V^\star\|_w$, where we take $\boldsymbol{\alpha} = (1,0)$ for simplicity. We assume that there is some norm $\| \cdot \|$ on $\reals^n$ and the Wasserstein distance and Lipschitz constant are computed with respect to this norm.

Since the costs are the same for both models, $\varepsilon_{\boldsymbol{\alpha}}^{\max} = 0$. We now characterize $\delta^{\max}$. 
For ease of notation, given random variables $X$ and $Y$ with probability laws $\nu_X$ and $\nu_Y$, we will use $\dWas(X,Y)$ to denote $\dWas(\nu_X, \nu_Y)$. 
Wasserstein distance is defined as~\cite{villani2009optimal}
\begin{equation}\label{eq:Kantorovich}
    \dWas(\nu_X, \nu_Y) = \inf_{(\tilde X, \tilde Y) \colon \tilde X \sim \nu_X, \tilde Y \sim \nu_Y }
    \EXP[ \| \tilde X - \tilde Y \| ].
\end{equation}
Now, for a fixed $(s,a)$, define $X = f(s,a) + N$, where $N \sim \nu_N$, and $Y = f(s,a)$. Then, the Wasserstein distance between $P(\cdot | s,a)$ and $\hat P(\cdot | s,a)$ is equal to $\dWas(X,Y)$ which, by~\eqref{eq:Kantorovich}, equals $\EXP[\| N \|]$ for all $(s,a)$. Thus, 
\[
    \delta^{\max}_{\FWas} = \sup_{(s,a) \in \ALPHABET S \times \ALPHABET A} \frac{ \EXP[ \| N \| ] }{ w(s) } \leq \EXP[ \| N \| ] 
\]

Thus, by Corollary~\ref{cor:AIS_sup}, part~2, we get
\begin{equation}\label{eq:CE-bound}
    \| V^{\hat \pi^\star} - V^\star \|_w \le \frac{2\gamma}{1- \gamma \kappa} \EXP[ \| N \| ] \Lip(\hat V^\star).
\end{equation}
This bound precisely quantifies the engineering intuition that certainty equivalent control laws are good when the noise is ``small''.

\begin{remark}
    The right hand side of~\eqref{eq:CE-bound} does not depend on the weight function (provided the weight function satisfies Assumption~\ref{assm:stability}). Suppose  the per-step cost is such that $c_{\min} \ge 0$ and $w = 1 + V^\star$ satisfies Assumption~\ref{assm:stability} for some $\kappa < 1/\gamma$. Then, Eq.~\eqref{eq:CE-bound} implies that 
    \[
        V^\star(s) \le V^{\hat \pi^\star}(s) \le 
        \biggl(1 + \frac{2\gamma}{1- \gamma \kappa} \EXP[ \| N \| ] \Lip(\hat V^\star) \biggr) \bigl(1 +V^\star(s)\bigr).
    \]
    This inequality may be viewed as a generalization of the approximation bounds of~\cite{Witsenhausen1969} to dynamical systems.
\end{remark}

\section{Conclusion}
  In this paper, we present a series of bounds on the weighted approximation error when using the optimal policy of an approximate model $\hat {\ALPHABET M}$ in the original model $\ALPHABET M$. For each bound, we have two types of bounds: one which depends on the value function $\hat V^\star$ of the approximate model $\hat {\ALPHABET M}$ and the other which depends on the value function $V^\star$ of the original model $\ALPHABET M$. The first type of bound is more useful in practice because one would obtain $\hat V^\star$ when computing the optimal policy of the approximate model $\hat {\ALPHABET M}$. However, the second type of bound is a theoretical upper bound that may be useful for obtaining bounds for reinforcement learning algorithms, e.g., in obtaining sample complexity bounds. 

  Our results rely on using an appropriate $(\tuple)$ such that certain policies are $(\tuple)$ stable. The choice of the weight function $w$ impacts the tightness of the bounds.  Understanding how to choose weight functions is an interesting research direction. 

  In this paper, we assumed that the approximate model $\hat {\ALPHABET M}$ was given. However, often the approximate model is a design choice. For example, when solving continuous state models, we may decide how to quantize the state space. The approximation bounds obtained in this paper may be useful in guiding the design of such approximate models. They may also be useful in generalizing the convergence guarantees and regret bounds of reinforcement learning algorithms to models with unbounded per-step cost.

\bibliographystyle{IEEEtran}
\bibliography{IEEEabrv,references}
\appendices

\section{Proof of Lemma~\ref{lem:bounded}}\label{app:bounded}

We prove each part separately.

\subsubsection*{Proof of part 1)} Fix a state $s \in \ALPHABET S$. For a policy $\pi \in \Pi_S(\kappa,w)$ and a value function $v \in \ALPHABET V_w$, we have
\begin{align*}
    \hskip 1em & \hskip -1em 
    \left|\frac{\BELLMAN ^{\pi}v(s)}{w(s)}\right|
    \\ &\stackrel{(a)}\le \left|\frac{c_{\pi}(s)}{w(s)}\right| + \gamma \left| \int_{\ALPHABET S}P_{\pi}(ds' \mid s) \frac{v(s')}{w(s')} \frac{w(s')}{w(s)}\right| \\
    &\stackrel{(b)}{\leq}  \| c_\pi \|_w + \gamma \| v \|_w \left| \int_{\ALPHABET S}P_{\pi}(ds' \mid s) \frac{w(s')}{w(s)}\right|\\
    &\stackrel{(c)}{\le}  \| c_\pi \|_w + \gamma \| v \|_w \kappa <\infty,
\end{align*}
where $(a)$ follows from the triangle inequality, $(b)$ follows from the definition of $\| \cdot \|_{w}$ and $(c)$ follows from the fact that $\pi$ is $(\tuple)$ stable.
\subsubsection*{Proof of part 2)} Fix a state $s \in \ALPHABET S$. We have
\begin{align*}
    \hskip 0.5em & \hskip -0.5em \left|\frac{[\BELLMAN^{\pi} v_1 - \BELLMAN^{\pi}v_2](s)}{w(s)}\right|  \\
    \displaybreak[1]
     &= \gamma \left| \int_{\ALPHABET S}P_{\pi}(ds' \mid s)\biggl[\frac{v_1(s')-v_2(s')}{w(s')}\biggr]\frac{w(s')}{w(s)}\right| \\
     &\stackrel{(a)}{\leq}  \gamma \| v_1 - v_2 \|_w \left|\int_{\ALPHABET S}P_{\pi}(ds' \mid s)\frac{w(s')}{w(s)} \right|\\
     &\stackrel{(b)}{\leq}  \gamma \kappa \| v_1 - v_2 \|_w
\end{align*}
where $(a)$ holds from the definition of $\| \cdot \|_{w}$ and $(b)$ holds because $\pi$ is $(\tuple)$ stable.

\subsubsection*{Proof of part 3)}
From parts 1 and 2 of Lemma~\ref{lem:bounded}, we know that $\BELLMAN^\pi:\ALPHABET V_w \to \ALPHABET V_w $ is a contraction. Since $\ALPHABET V_w$ is a complete metric space (under the $\|\cdot\|_w$ norm), it follows from Banach fixed point theorem that $\BELLMAN^\pi$ has a unique fixed point $F$ in $\ALPHABET V_w$. If $V^{\pi}_n$ denotes the $n-$step discounted cost for policy $\pi$, then it can be shown that $V^{\pi}_{n+1} = \BELLMAN^\pi V^{\pi}_n$ and that $V^{\pi}_n \in \ALPHABET V_w$ for all $n$. Thus, by Banach fixed point theorem, $V^{\pi}_n$ converges to the fixed point $F$ of $\BELLMAN^\pi$ in the $\|\cdot\|_w$ norm. Since convergence in $\|\cdot\|_w$ norm implies pointwise convergence, we have $F(s) = \lim_{n \to \infty} V^\pi_n(s)$ for all $s \in \ALPHABET S$. Furthermore, since per-step costs are bounded below, we have that for all $s \in \ALPHABET S$,
\begin{align*}
\lim_{n \to \infty} V^\pi_n(s)
&= 
\lim_{n \to \infty} \EXP^{\pi} \biggl[ \sum_{t=1}^n \gamma^{t-1} c(S_t, A_t) \biggr]
\notag \\
&=
\EXP^{\pi} \biggl[ \lim_{n \to \infty} \sum_{t=1}^n \gamma^{t-1} c(S_t, A_t) \biggr]
= 
V^\pi(s)
\end{align*}
where the second equality follows from the monotone convergence theorem when $c_{\min} \ge 0$ (the case of $c_{\min} < 0$ follows from a similar argument by shifting $\{V^\pi_n\}_{n \ge 0}$ to make it non-negative and monotone).

\section{Proof of Lemma~\ref{lem:policy-error}}\label{app:policy-error}

Consider
\begin{align}
   \hskip 2em & \hskip -2em 
    \bigl\|V^{\pi} - \hat V ^{\hat\pi} \bigr\|_w =
    \bigl\|\BELLMAN ^{\pi}V^{\pi} - \hat \BELLMAN ^{\hat\pi} \hat V ^{\hat\pi}\bigr\|_w \notag \\
    &\leq \bigl\| \BELLMAN^{\pi}V^{\pi} - \hat \BELLMAN^{\hat\pi} V^{\pi}\bigr\|_w 
     + \bigl\|\hat \BELLMAN^{\hat\pi} V^{\pi} - \hat \BELLMAN ^{\hat\pi} \hat V ^{\hat\pi}\bigr\|_w \notag \\
     &\leq 
     \MISMATCH^{\pi,\hat\pi} V^{\pi}
     +
     \gamma \kappa \bigl\|V^{\pi} - \hat V ^{\hat\pi} \bigr\|_w 
     \label{eq:lem-mismatch-pt-1-step-3}
\end{align}
where the first inequality follows from triangle inequality, and the last from the definition of Bellman mismatch functional and Lemma~\ref{lem:bounded} as $\hat\pi \in \hat{\Pi}_S(\tuple)$. Re-arranging the terms in \eqref{eq:lem-mismatch-pt-1-step-3}, we obtain
\begin{equation}
     \bigl\|V^{\pi} - \hat V ^{\hat\pi} \bigr\|_w \leq
     \frac{1}{1-\gamma\kappa} \MISMATCH^{\pi,\hat\pi} V^{\pi}.
     \label{eq:lem-mismatch-pt-1-step-4}
\end{equation}
Next consider
\begin{align}
   \hskip 2em & \hskip -2em 
    \bigl\|V^{\pi} - \hat V ^{\hat\pi} \bigr\|_w =
    \bigl\|\BELLMAN ^{\pi}V^{\pi} - \hat \BELLMAN ^{\hat\pi} \hat V ^{\hat\pi}\bigr\|_w \notag \\
    &\leq \bigl\| \BELLMAN^{\pi}V^{\pi} - \BELLMAN^{\pi} \hat V^{\hat\pi}\bigr\|_w 
     + \bigl\| \BELLMAN^{\pi} \hat V^{\hat\pi} - \hat \BELLMAN ^{\hat\pi} \hat V ^{\hat\pi}\bigr\|_w \notag \\
     &\leq \gamma \kappa \bigl\|V^{\pi} - \hat V ^{\hat\pi} \bigr\|_w + \MISMATCH^{\pi,\hat\pi}\hat V^{\hat\pi}
     \label{eq:lem-mismatch-pt-1-step-1}
\end{align}
where the first inequality follows from triangle inequality, and the last from Lemma~\ref{lem:bounded} as $\pi \in \Pi_S(\tuple)$ and from the definition of Bellman mismatch functional. Re-arranging the terms in \eqref{eq:lem-mismatch-pt-1-step-1}, we obtain
\begin{equation}
     \bigl\|V^{\pi} - \hat V ^{\hat\pi} \bigr\|_w \leq
     \frac{1}{1-\gamma\kappa} \MISMATCH^{\pi,\hat\pi}\hat V^{\hat\pi}.
     \label{eq:lem-mismatch-pt-1-step-2}
\end{equation}

Combining \eqref{eq:lem-mismatch-pt-1-step-4} and \eqref{eq:lem-mismatch-pt-1-step-2} establishes \eqref{eq:DeltaVpi}.

\section{Proof of Lemma~\ref{lem:value-error}}\label{app:value-error}
\subsection{Proof of part 1}
For the bound in terms of $\hat V^\star$, we have
\begin{align}
    \|V^\star - \hat V^\star \|_w &= \|\BELLMAN^{\pi^\star} V^\star - \hat\BELLMAN^{\hat\pi^\star} \hat V^\star \|_w \notag \\
    &\leq \|\BELLMAN^{\pi^\star} V^\star - \BELLMAN^{\pi^\star} \hat V^\star \|_w + \|\BELLMAN^{\pi^\star} \hat V^\star - \hat\BELLMAN^{\hat\pi^\star} \hat V^\star \|_w \notag \\
    &\leq \gamma\kappa \|V^\star - \hat V^\star \|_w + \MISMATCH^{\pi^\star,\hat\pi^\star}\hat V^\star
    \label{eq:lem3_new_pf_1}
\end{align}
where the last inequality holds from Lemma~\ref{lem:bounded} as $\pi^\star \in \Pi_S(\tuple)$ and from the definition of Bellman mismatch functional. Re-arranging the terms in~\eqref{eq:lem3_new_pf_1}, we obtain
\begin{equation}
    \| V^\star - \hat V^\star \|_w \le \frac 1{(1 - \gamma \kappa)} \MISMATCH^{\pi^\star, \hat \pi^\star} \hat V^\star.
\end{equation}
For the bound in terms of $V^\star$, we have
\begin{align}
     \|V^\star - \hat V^\star \|_w &= \| \BELLMAN^{\pi^\star}V^\star - \hat\BELLMAN^{\hat\pi^\star}\hat V^\star \|_w \notag \\
     &\leq \|\BELLMAN^{\pi^\star}V^\star - \hat\BELLMAN^{\hat\pi^\star}V^\star\|_w + \| \hat\BELLMAN^{\hat\pi^\star}V^\star- \hat\BELLMAN^{\hat\pi^\star}\hat V^\star \|_w \notag \\
     &\leq \MISMATCH^{\pi^\star,\hat\pi^\star}V^\star + \gamma\kappa \|V^\star - \hat V^\star \|_w
     \label{eq:lem3_new_pf_2}
\end{align}
where the last inequality holds from the definition of Bellman mismatch functional and Lemma~\ref{lem:bounded} as $\hat\pi^\star \in \hat\Pi_S(\tuple)$. Re-arranging the terms in~\eqref{eq:lem3_new_pf_2}, we obtain
\begin{equation}
    \| V^\star - \hat V^\star \|_w \le \frac 1{(1 - \gamma \kappa)} \MISMATCH^{\pi^\star, \hat \pi^\star} V^\star.
\end{equation}
\subsection{Proof of part 2}
If $\BELLMAN^\star$ were a $\|\cdot\|_w$-norm contraction, then we could have used the exact same proof argument as  in proof of part~1. However, we have not established that $\BELLMAN^\star$ is a $\|\cdot\|_w$-norm contraction under Assumptions~\ref{assm:DP-solvability}, \ref{assm:stability} and \ref{ass:N1}. So, we need a different proof argument.
We use the shorthand notation $[ v ]_{w}$ to denote $\sup_{s \in \ALPHABET S} v(s)/w(s)$ (note that there is no absolute value sign around $v(s)$). 

Let $\mu^\star \in \ALPHABET G(\hat V^\star) \cap  \Pi_S(\tuple)$. Now, consider
\begin{align}
    [ V^\star - \hat V^\star ]_{w} 
    &=
    [ \BELLMAN^{\star} V^{\star} - \hat {\BELLMAN}^{\star} \hat V^{\star}]_{w}
    \notag \\
    &\stackrel{(a)}\le
    [ \BELLMAN^{\star} V^{\star} - \BELLMAN^{\star} \hat V^{\star}]_{w}
    +
    [ \BELLMAN^{\star} \hat V^{\star} - \hat {\BELLMAN}^{\star} \hat V^{\star}]_{w}
    \notag \\
    &\stackrel{(b)}\le 
    [ \BELLMAN^{\mu^\star} V^{\star} - \BELLMAN^{\mu^\star} \hat V^{\star}]_{w}
    + 
    \MISMATCH^{\star} \hat V^\star
    \notag \\
    &\stackrel{(c)}\le \gamma\kappa \| V^{\star} - \hat V^{\star} \|_w 
    + 
    \MISMATCH^{\star} \hat V^\star \label{eq:alt-proof-1}
\end{align}
where $(a)$ follows from the definition supremum, $(b)$ follows   from  $\mu^\star \in \ALPHABET G(\hat V^\star)$  and the fact that $\BELLMAN^\star  V^\star \le \BELLMAN^{\mu^\star} V^\star$ , and $(c)$ follows from contraction of the Bellman operator $\BELLMAN^{\mu^\star}$ (since $\mu^\star \in \Pi_S(\tuple)$). 

Now we consider the inequality in the other direction.
\begin{align}
    [ \hat V^\star - V^\star ]_{w} 
    &=
    [  \hat {\BELLMAN}^{\star} \hat V^{\star} -  \BELLMAN^{\star} V^{\star}  ]_{w}
    \notag \\
    &\stackrel{(d)}\le
    [ \hat {\BELLMAN}^{\star} \hat V^{\star} - \BELLMAN^{\star} \hat V^{\star}  ]_{w}
    +
    [ \BELLMAN^{\star} \hat V^{\star} - \BELLMAN^{\star} V^{\star}  ]_{w}
    \notag \\
    &\stackrel{(e)}\le 
    \MISMATCH^{\star} \hat V^\star
    + 
    [ \BELLMAN^{\pi^\star} \hat V^{\star} - \BELLMAN^{\pi^\star} V^{\star}  ]_{w}
    \notag \\
    &\stackrel{(f)}\le
    \MISMATCH^{\star} \hat V^\star 
    + 
    \gamma\kappa \| \hat V^{\star} - V^{\star} \|_w 
    \label{eq:alt-proof-2}
\end{align}
where $(d)$ follows from the definition of supremum, $(e)$ follows from $\pi^\star \in \ALPHABET G(V^\star)$ and the fact that $ {\BELLMAN}^\star \hat V^\star \le \BELLMAN^{\pi^\star} \hat V^\star$, and $(f)$ follows from contraction of the Bellman operator $\BELLMAN^{\pi^\star}$ (since $\pi^\star \in \Pi_S(\tuple)$).

Combining~\eqref{eq:alt-proof-1} and~\eqref{eq:alt-proof-2} and rearranging terms, we get~\eqref{eq:N1}.

\subsection{Proof of part 3}
The proof argument is similar to that of part~2. 
Consider
\begin{align}
    [ V^\star - \hat V^\star ]_{w} 
    &=
    [ \BELLMAN^{\star} V^{\star} - \hat {\BELLMAN}^{\star} \hat V^{\star}]_{w}
    \notag \\
    \displaybreak[1]
    &\stackrel{(a)}\le
    [ \BELLMAN^{\star} V^{\star} - \hat {\BELLMAN}^{\star} V^{\star}]_{w}
    +
    [ \hat {\BELLMAN}^{\star} V^{\star} - \hat {\BELLMAN}^{\star} \hat V^{\star}]_{w}
    \notag \\
    \displaybreak[1]
    &\stackrel{(b)}\le 
    \MISMATCH^{\star} V^\star
    + 
    [ \hat \BELLMAN^{\hat \pi^\star} V^{\star} - \hat \BELLMAN^{\hat \pi^\star} \hat V^{\star}]_{w}
    \notag \\
    &\stackrel{(c)}\le 
    \MISMATCH^{\star} V^\star 
    + 
    \gamma\kappa \| V^{\star} - \hat V^{\star} \|_w 
    \label{eq:alt-proof-21}
\end{align}
where $(a)$ follows from the definition of supremum, $(b)$ follows the definition of $\hat \pi^\star$ and the fact that $\hat {\BELLMAN}^\star V^\star \le \hat {\BELLMAN}^{\hat \pi^\star} V^\star$, and $(c)$ follows from contraction of the Bellman operator $\hat \BELLMAN^{\hat \pi^\star}$. 

Let $\hat\mu^\star \in \hat {\ALPHABET G}(V^\star) \cap  \hat \Pi_S(\tuple)$. Now, we consider the inequality in the other direction.
\begin{align}
    [ \hat V^\star - V^\star ]_{w} 
    &=
    [  \hat {\BELLMAN}^{\star} \hat V^{\star} -  \BELLMAN^{\star} V^{\star}  ]_{w}
    \notag \\
    \displaybreak[1]
    &\stackrel{(d)}\le
    [ \hat {\BELLMAN}^{\star} \hat V^{\star} - \hat \BELLMAN^{\star} V^{\star}  ]_{w}
    +
    [ \hat \BELLMAN^{\star} V^{\star} - \BELLMAN^{\star} V^{\star}  ]_{w}
    \notag \\
    \displaybreak[1]
    &\stackrel{(e)}\le 
    [ \hat \BELLMAN^{\hat \mu^\star} \hat V^{\star} - \hat \BELLMAN^{\hat \mu^\star} V^{\star}  ]_{w}
    + 
    \MISMATCH^{\star} V^\star
    \notag \\
    &\stackrel{(f)}\le
    \gamma\kappa \| \hat V^{\star} -  V^{\star} \|_w 
    + 
    \MISMATCH^{\star} V^\star 
    \label{eq:alt-proof-22}
\end{align}
where $(d)$ follows from the definition of supremum, $(e)$ follows the definition of $\hat \mu^\star$ and the fact that $\hat {\BELLMAN}^\star \hat V^\star \le \hat \BELLMAN^{\hat \mu^\star} \hat V^\star$, and $(f)$ follows from contraction of the Bellman operator $\hat\BELLMAN^{\hat \mu^\star}$. 

Combining~\eqref{eq:alt-proof-21} and~\eqref{eq:alt-proof-22} and rearranging terms, we get~\eqref{eq:N2}.

\section{Proof of Theorem~\ref{thm:bound}}\label{app:thm:bound}
We prove each part separately.
\subsection{Proof of part~1}
For the bound in terms of $\hat V^\star$, by triangle inequality, we have
\begin{equation}\label{eq:thm-bound-pt-1-step-1}
    \bigl\| V^{\hat \pi ^\star} - V^\star \bigr\|_w \leq 
        \bigl\|V^{\hat \pi ^\star} - \hat V ^\star \bigr\|_w + \bigl\| \hat V ^\star - V ^\star \bigr\|_w
\end{equation}
Recall that $\hat V^\star = \hat V^{\hat \pi^\star}$. Since $\hat \pi^\star \in \Pi_S(\tuple) \cap \hat \Pi_S(\tuple)$, we can use Lemma~\ref{lem:policy-error} to bound the first term of~\eqref{eq:thm-bound-pt-1-step-1} by
\begin{align}\label{eq:thm-bound-pt-1-step-2}
    \bigl\|V^{\hat \pi ^\star} - \hat V ^\star \bigr\|_w 
    &\le 
    \frac{1}{1- \gamma\kappa} \MISMATCH^{\hat \pi^\star,\hat\pi^\star} \hat V^{\hat \pi^\star} =
    \frac{1}{1- \gamma\kappa} \MISMATCH^{\hat \pi^\star} \hat V^{\hat \pi^\star} \notag \\
    &=
    \frac{1}{1- \gamma\kappa} \MISMATCH^{\hat \pi^\star} \hat V^{\star} .
\end{align}
We can bound the second term in~\eqref{eq:thm-bound-pt-1-step-1} using Lemma~\ref{lem:value-error}, part~1 by
\begin{equation}\label{eq:thm-bound-new-step}
    \|\hat V^\star-V^\star  \|_w \le \frac 1{(1 - \gamma \kappa)} \MISMATCH^{\pi^\star, \hat \pi^\star} \hat V^\star.
\end{equation}
We obtain the result by combining~\eqref{eq:thm-bound-pt-1-step-2} and~\eqref{eq:thm-bound-new-step}.

For the bound in terms of $V^\star$, we can write 
\begin{align}
   \hskip 2em & \hskip -2em
    \bigl\| V^{\hat\pi^\star} - V^\star \bigr\|_w =  \bigl\| \BELLMAN^{\hat\pi^\star}V^{\hat\pi^\star} - \BELLMAN^{\star}V^\star \bigr\|_w \notag \\
    &\leq \bigl\| \BELLMAN^{\hat\pi^\star}V^{\hat\pi^\star} - \BELLMAN^{\hat\pi^\star} V^\star \bigr\|_w
    + \bigl\|\BELLMAN^{\hat\pi^\star}V^{\star} - \hat\BELLMAN^{\hat\pi^\star} {V^{\star}} \bigr\|_w \notag \\
    &\quad + \bigl\| \hat\BELLMAN^{\hat\pi^\star}V^{\star} - \hat\BELLMAN^{\hat\pi^\star}\hat V^{\star}\bigr\|_w
    + \bigl\|\hat\BELLMAN^{\hat\pi^\star}\hat V^{\star} - \BELLMAN^{\star}V^{\star}\bigr\|_w \notag \\ 
    &\leq  \gamma\kappa \bigl\| V^{\hat\pi^\star} - V^\star \bigr\|_w + \MISMATCH^{\hat\pi^\star}V^{\star} \notag \\ 
    &\quad + \gamma\kappa \bigl\|V^{\star} - \hat V^{\star}\bigr\|_w + \bigl\| \hat V^{\star} - V^{\star}\bigr\|_w
    \label{eq:thm-bound-pt-2-step-1}
\end{align}
where the first inequality holds from triangle inequality and the last from the definition of Bellman mismatch functional and from Lemma~\ref{lem:bounded} as $\hat \pi^\star \in \Pi_S(\tuple) \cap \hat \Pi_S(\tuple)$. Re-arranging the terms in \eqref{eq:thm-bound-pt-2-step-1}, we obtain
\begin{align}
    \bigl\| V^{\hat\pi^\star} - V^\star \bigr\|_w \leq 
    \frac{1}{1-\gamma\kappa}\bigl[ \MISMATCH^{\hat\pi^{\star}}V^{\star} + (1+\gamma\kappa) \bigl\| \hat V^{\star} - V^\star\bigr\|_w \bigr].
    \label{eq:thm-bound-pt-2-step-2}
\end{align}
We use Lemma~\ref{lem:value-error}, part 1, to bound the last term of~\eqref{eq:thm-bound-pt-2-step-2} by 
\begin{equation}
    \| \hat V^\star-V^\star  \|_w \le \frac 1{(1 - \gamma \kappa)} \MISMATCH^{\pi^\star, \hat \pi^\star} V^\star.
    \label{eq:thm-bound-new-step-2}
\end{equation}
We obtain the result by combining~\eqref{eq:thm-bound-pt-2-step-2} and~\eqref{eq:thm-bound-new-step-2}.

\subsection{Proof of part~2}
Since Assumption~\ref{ass:N1} holds, we can use Lemma~\ref{lem:value-error} part~2 to
bound the second term of~\eqref{eq:thm-bound-pt-1-step-1} by
\begin{equation}\label{eq:thm-bound-pt-1-step-3}
    \bigl\| \hat V ^\star - V ^\star \bigr\|_w
    \le 
    \frac{1}{1- \gamma\kappa} \MISMATCH^{\star} \hat V^{\star} .
\end{equation}
Combining~\eqref{eq:thm-bound-pt-1-step-2} and~\eqref{eq:thm-bound-pt-1-step-3} completes the proof.

\subsection{Proof of part~3}
Since Assumption~\ref{ass:N2} holds, we can use Lemma~\ref{lem:value-error} part~3 to bound the last term of~\eqref{eq:thm-bound-pt-2-step-2} by
\begin{equation}
    \bigl\| \hat V^{\star} - V^\star\bigr\|_w \leq \frac{1}{1-\gamma\kappa}\MISMATCH^{\star}V^{\star}.
    \label{eq:thm-bound-pt-2-step-3}
\end{equation}
Combining \eqref{eq:thm-bound-pt-2-step-2} and \eqref{eq:thm-bound-pt-2-step-3} completes the proof.

\section{Proof of Lemma~\ref{lem:assm_5_4}}\label{app:assm_5_4}

Assumption~\ref{assm:openloop} implies that, for each $a \in \ALPHABET A$,  
\begin{equation}
  \int_{\ALPHABET S} w(s') P(ds' \mid s, a) 
  \le 
  \kappa w(s),
  \quad \forall s \in \ALPHABET S.
\end{equation}

For any $v \in \ALPHABET V_w$ such that $\ALPHABET G(v)$ is nonempty, let $\pi_v$ denote a policy in $\ALPHABET G(v)$. We will first show that $\pi_v \in \Pi_S(\tuple)$ and then use this to prove Assumption \ref{ass:N1}.

For policy $\pi_v$, we have that  
\begin{align*}
  \int_{\ALPHABET S} w(s') P_{\pi_v}(ds' | s)
   = &
   \int_{\ALPHABET S} 
   \int_{\ALPHABET A} w(s') \pi_v(da | s) P(ds' | s,a)
   \notag\\
   = &
   \int_{\ALPHABET A}
   \int_{\ALPHABET S} 
    w(s')  P(ds' | s,a)
    \pi_v(da | s)
   \notag\\
  \le &
  \int_{\ALPHABET A} \kappa w(s) \pi_v(da | s) = \kappa w(s),
  \quad \forall s \in \ALPHABET S.
\end{align*}
Thus, $\pi_v$ satisfies \eqref{eq:kappa}.

Next, from the definition of the Bellman operator, we have    
\begin{align*}
c_{\pi_v}(s) = 
    & [ \BELLMAN^{\pi_v} v](s) - {\gamma}
    \int_{\ALPHABET A} \pi_{v}(da \mid s)
    \int_{\ALPHABET S} v(s') P(ds'\mid s,a)
    \notag\\
    \le &
    [ \BELLMAN^{\pi_v} v](s) + {\gamma} |c_{\min}/(1-\gamma)|,
\end{align*}
where the inequality above is due to the fact that $ - v(s) \le  |c_{\min}/(1-\gamma)|$ for all $v \in \ALPHABET V_w$.
Therefore,
\begin{align*}
\| c_{\pi_v}   \|_w 
= & \sup_{s \in \ALPHABET S}
    \frac{ |c_{\pi_v}(s)| }{ w(s) }
\notag\\
\displaybreak[1]
\le & \sup_{s \in \ALPHABET S}
    \frac{[ \BELLMAN^{\pi_v} v](s) + {\gamma}|c_{\min}/(1-\gamma)|}{w(s)}
\notag\\
\le & \| \BELLMAN^{\pi_v} v   \|_w  +{\gamma} |c_{\min}/(1-\gamma)| \notag \\
= & \| \BELLMAN^{\star} v   \|_w  +{\gamma} |c_{\min}/(1-\gamma)| 
< \infty .
\end{align*}
Thus, $\pi_v$ satisfies \eqref{eq:c-pi}. 
Hence, $\pi_v \in \Pi_S(\tuple)$.

With $v=\hat V^\star$ in the above argument, it follows that $\pi_{\hat V^\star}  \in \Pi_S(\tuple)$. Hence, Assumption~\ref{ass:N1} is satisfied.

Similarly, for any $v \in \ALPHABET V_w$, let $\hat\pi_v$ denote a policy in $\hat{\ALPHABET G}(v)$. Using arguments identical to the ones used above, we can show that $\hat\pi_v \in \hat\Pi_S(\tuple)$. Setting $v= V^\star$ then implies that 
$\hat \pi_{ V^\star}  \in \hat \Pi_S(\tuple)$. Hence, Assumption~\ref{ass:N2} is satisfied.

\section{Proof of Lemma~\ref{lem:mismatch}}\label{app:mismatch}

By definition, $\MISMATCH^{\max} v = \sup_{a \in  \ALPHABET A} \MISMATCH^{\pi_a} v$. Since, by Assumption \ref{assm:openloop}, $\pi_a \in \Pi_S(\tuple) \cap \hat \Pi_S(\tuple)$ for all $a \in \ALPHABET A$, it follows that 
\begin{equation}\label{eq:lem-mismatch-pt-3-step-1}
    \MISMATCH^{\max} v \leq \sup_{\pi \in \Pi_S(\tuple) \cap \hat \Pi_S(\tuple)} \MISMATCH^\pi v.
\end{equation}

Define
\begin{multline*}
    \Xi^{(s,a)}v = 
     c(s,a) - \hat c(s,a) \\
     + \gamma \int_{\ALPHABET S} v(s') P(ds' | s,a) - \gamma \int_{\ALPHABET S} v(s')\hat P(ds'|s,a).
\end{multline*}
Then, we have that 
\[
 \MISMATCH^{\max}v = 
 \sup_{a \in \ALPHABET A} \| \BELLMAN^{\pi_a} v - \hat {\BELLMAN}^{\pi_a} v \|_w 
 =\sup_{a \in  \ALPHABET A} \sup_{s \in \ALPHABET S } \frac{\bigl| \Xi^{(s,a)}v \bigr|}{w(s)}
\]
and for every $\pi \in \Pi_S(\tuple) \cap \hat \Pi_S(\tuple)$, 
\[
    [\BELLMAN^{\pi} v](s) - [\hat \BELLMAN^{\pi} v](s) = 
    \int_{\ALPHABET A} \pi(da \mid s) \Xi^{(s,a)} v
\]
Therefore,
\begin{align}
    \MISMATCH^{\pi} v &= 
    \sup_{s \in \ALPHABET S} 
    \frac{\bigl| \int_{\ALPHABET A} \pi(da \mid s) \Xi^{(s,a)} v \bigr|}{w(s)}
    \notag \\
    &\le 
    \sup_{s \in \ALPHABET S } \sup_{a \in  \ALPHABET A}
    \frac{|\Xi^{(s,a)} v| }{w(s)}
    = \MISMATCH^{\max} v.
    \label{eq:lem-mismatch-pt-3-step-2}
\end{align}
Combining~\eqref{eq:lem-mismatch-pt-3-step-1} and~\eqref{eq:lem-mismatch-pt-3-step-2}, we get the first part of~\eqref{eq:mismatch-reln}

For the second part, note that for any set $\ALPHABET X$, $| \inf_{x \in \ALPHABET X} f(x) - \inf_{x \in \ALPHABET X} g(x)| \le \sup_{x \in \ALPHABET X}| f(x) - g(x) |$. Therefore, 
\[
    |[\BELLMAN^{\star} v](s) - [\hat \BELLMAN^{\star} v](s)| \le
    \sup_{a \in \ALPHABET A} |\Xi^{(s,a)} v|.
\]
Using the above inequality in the definition of $\MISMATCH^{\star}$, we get
\begin{align}
    \MISMATCH^{\star} v &= 
    \sup_{s \in \ALPHABET S} 
    \frac{ \bigl| [\BELLMAN^{\star} v](s) - [\hat \BELLMAN^{\star} v](s) \bigr|}{w(s)} \notag \\
    &\le    \sup_{s \in \ALPHABET S} 
    \frac{\sup_{a \in \ALPHABET A} \bigl| \Xi^{(s,a)} v \bigr|}{w(s)}
    = \MISMATCH^{\max} v,
    \label{eq:lem-mismatch-pt-3-step-3}
\end{align}
which establishes the second part of~\eqref{eq:mismatch-reln}.

\section{Proof of Theorem~\ref{thm:alpha_beta}}\label{app:alpha_beta}
\subsection{Proof of part~1}
The bound in terms of $\hat V^\star$ follows from Theorem~\ref{thm:bound}, part~1 (the bound in terms of $\hat V^\star$) and Lemma~\ref{lem:alpha-beta}, part~3.

For the bound in terms of $V^\star$, we can use Lemma~\ref{lem:alpha-beta}, part~3 and Theorem~\ref{thm:bound}, part~1 to write 
\begin{align}
    \hskip 1em & \hskip -1em
      \bigl\| V^{\hat \pi^\star} - V^\star \bigr\|_w
      =
      \frac{1} {\alpha_1}\bigl\| V^{\hat \pi^\star}_{\boldsymbol{\alpha}} - V^\star_{\boldsymbol{\alpha}} \bigr\|_w
      \notag \\
      \displaybreak[1]
      &
      \le
      \frac{1}{\alpha_1(1 - \gamma \kappa) }
      \MISMATCH^{\hat \pi^\star}_{\boldsymbol{\alpha}} V^\star_{\boldsymbol{\alpha}}
      +
    \frac{(1 + \gamma \kappa)}{\alpha_1(1 - \gamma \kappa)^2 }
      \MISMATCH^{\pi^\star, \hat \pi^\star} V_{\boldsymbol{\alpha}}^\star
       \notag \\
      &=
      \frac{1}{\alpha_1(1 - \gamma \kappa) }
      \MISMATCH^{\hat \pi^\star}_{\boldsymbol{\alpha}} (\alpha_1 V^\star)
      +
      \frac{(1 + \gamma \kappa)}{\alpha_1(1 - \gamma \kappa)^2 }
     \MISMATCH^{\pi^\star, \hat \pi^\star}  (\alpha_1 V^\star),
\end{align}
 where, in the last step, we used the fact that part 2 of Lemma \ref{lem:alpha-beta} implies that
\[
     \MISMATCH^{\hat \pi^\star}_{\boldsymbol{\alpha}} V^\star_{\boldsymbol{\alpha}} = \MISMATCH^{\hat \pi^\star}_{\boldsymbol{\alpha}} (\alpha_1 V^\star)  
     \quad \text{and}\quad
     \MISMATCH_{\boldsymbol{\alpha}}^{\pi^\star, \hat \pi^\star}  V^\star_{\boldsymbol{\alpha}} = \MISMATCH_{\boldsymbol{\alpha}}^{\pi^\star, \hat \pi^\star}  (\alpha_1 V^\star).
\]
\subsection{Proof of part~2}
Assumption~\ref{ass:N1_alt} is the same as Assumption~\ref{ass:N1} for model $\ALPHABET M_{\boldsymbol{\alpha}}$. Hence, the result follows immediately from Theorem~\ref{thm:bound}, part~2 and Lemma~\ref{lem:alpha-beta}, part~3. 
\subsection{Proof of part~3}
Assumption~\ref{ass:N2_alt} is the same as Assumption~\ref{ass:N2} for model $\ALPHABET M_{\boldsymbol{\alpha}}$. Hence, the result follows from Theorem~\ref{thm:bound}, part~3 together with Lemma~\ref{lem:alpha-beta}, part~3 and the fact that part~2 of Lemma~\ref{lem:alpha-beta} implies
\[
     \MISMATCH^{\star}_{\boldsymbol{\alpha}} V^\star_{\boldsymbol{\alpha}} = \MISMATCH^{\star}_{\boldsymbol{\alpha}} (\alpha_1 V^\star).
\]

\section{Proof of Proposition~\ref{prop:LQR}}\label{append:lqr}
When $\alpha_1 = 1$, Assumptions~\ref{ass:N1} and~\ref{ass:N1_alt} are equivalent. To apply Theorem~\ref{thm:alpha_beta}, we consider $\alpha_1=1$ and an arbitrary $\alpha_2$. Then the corresponding Bellman updates for $\hat{V}^\star(s)$ can be calculated as
\begin{align*}
    &
    \BELLMAN^{\star}_{(1, \alpha_2)}\hat{V}^\star(s)
    \notag\\
    &=  
    s^\intercal \left(
    Q + \gamma A^\intercal \hat P A - \gamma^2 A^\intercal \hat PB(R + \gamma B^\intercal \hat P B)^{-1} B^\intercal \hat P A
    \right)
    s
    \notag\\
    & \quad + \gamma (\hat q + \Tr(\Sigma_W \hat P)) 
    + \alpha_2,
\end{align*}
and 
\begin{align*}
    \hat \BELLMAN^\star\hat{V}^\star(s) &= \hat V^\star (s) = s^\intercal \hat P s + \hat q.
    \notag \\
    &= s^\intercal \hat P s + \gamma(\hat q + \Tr(\hat \Sigma_W \hat P))
\end{align*}
where the last term uses the fact that $\hat q = \gamma \Tr(\hat \Sigma_W \hat P)/(1-\gamma)$.

Therefore, we have
\begin{align*}
    &|\BELLMAN^{\star}_{(1, \alpha_2)}\hat{V}^\star(s)
     - \hat{\BELLMAN}^{\star}\hat{V}^\star(s)|
    \\
    & \quad = 
    \left|
    s ^\intercal D^\star
    s
    + \gamma \Tr((\Sigma_W -\hat \Sigma_W) \hat P)
    + \alpha_2
    \right|,
\end{align*}
where $D^\star$ is given by \eqref{eq:Dstar}.

Note that $\hat \pi^\star(s) = - \hat K^\star s = -\gamma(\hat R + \gamma \hat B^\intercal \hat P \hat B)^{-1}\hat B^\intercal \hat P \hat A s$. As a result, 
for any $\alpha_2$, $\BELLMAN^{\hat \pi^\star}_{(1, \alpha_2)}\hat{V}^\star(s)$ is given by
\begin{align*}
    \BELLMAN^{\hat \pi^\star}_{(1, \alpha_2)}\hat{V}^\star(s) 
    &= 
    s^\intercal
    \big(
    Q +  (\hat K^\star)^\intercal R \hat K^\star + \gamma A_{\hat K^\star}^\intercal \hat P A_{\hat K^\star}
    \big) 
    s  
    \notag\\
    & \quad + \gamma (\hat q + \Tr(\Sigma_W \hat P))
    + \alpha_2
    ,
\end{align*}

and $\hat \BELLMAN^{\hat \pi^\star}\hat{V}^\star(s) = \hat{\BELLMAN}^{\star}\hat{V}^\star(s) = \hat V^\star (s)$.
Therefore, we have
\begin{align*}
    &|
    \BELLMAN^{\hat \pi^\star}_{(1, \alpha_2)}\hat{V}^\star(s) 
     - \hat{\BELLMAN}^{\hat \pi^\star}\hat{V}^\star(s)|
    \\
    &\quad = 
    \left|
    s ^\intercal D^{\hat \pi^\star}
    s
    + \gamma \Tr((\Sigma_W -\hat \Sigma_W) \hat P)
    + \alpha_2
    \right|,
\end{align*}
where $D^{\hat \pi^\star}$ is given by \eqref{eq:Dpihatstar}. Then, the Bellman mismatches functionals of Section~\ref{sec:alpha_beta} for $\hat V$ with $(1, \alpha_2)$ can be calculated as follows:
\begin{align*}
    \MISMATCH^{\hat \pi^\star}_{(1, \alpha_2)}\hat{V}^\star
     &= \sup_{s \in \ALPHABET S} \frac{\left|
    s ^\intercal D^{\hat \pi^\star}
    s
    + d_\Sigma + \alpha_2
    \right|}{w(s)}
    ,
    \\
    \MISMATCH^{\star}_{(1, \alpha_2)}\hat{V}^\star
     &= \sup_{s \in \ALPHABET S} \frac{\left|
    s ^\intercal D^\star
    s
    + d_\Sigma + \alpha_2
    \right|}{w(s)},
\end{align*}
where $d_\Sigma$ is given by \eqref{eq:dsigma}.

Eq.~\eqref{eq:LQ-bound} then then follows from Theorem~\ref{thm:alpha_beta} part 2 by observing that for any symmetric matrix $D$
\[
\sup_{s \in \ALPHABET S} \frac{\left|
    s ^\intercal D s
    + d_\Sigma + \alpha_2
    \right|}{1 + \ell s^\intercal s}
    \le
    \max\bigg\{ \frac{\rho(D)}{\ell} ,
    | d_\Sigma + \alpha_2 | \biggr\}.
\]

\section{Proof of Lemma~\ref{lem:mismatch_bounds}}\label{app:mismatch_bounds}
\subsection{Proof of part 1}
For any $\scale$ with $\alpha_2 > 0$, we have 
\begin{align}\label{eq:IPM_pf2}
    \MISMATCH^{\pi,\hat\pi}_{\boldsymbol{\alpha}} v =& \sup_{s \in \ALPHABET S}\frac{\bigl| \BELLMAN_{\boldsymbol{\alpha}}^{\pi} v(s) -
    \hat{\BELLMAN}^{\hat\pi}v(s) \bigr| }{w(s)} \notag \\
    \leq& \sup_{s \in \ALPHABET S} \frac{\bigl|\alpha_1 c_{\pi}(s)+\alpha_2 - \hat c_{\hat \pi}(s) \bigr|}{w(s)} \nonumber \\
    \quad& + \gamma \sup_{s \in \ALPHABET S}\frac{\biggl|\int_{\ALPHABET S} v(s')\Bigl[ P_{\pi}\bigl(ds'|s)\bigr) - \hat P_{\hat\pi}\bigl(ds'|s) \bigr)\Bigr] \biggr|}{w(s)} 
    \displaybreak[1]
    \notag \\
    \leq& \varepsilon_{\boldsymbol{\alpha}}(\pi,\hat\pi)  \notag \\
     \quad& + \gamma \rho_{\F}(v)\sup_{s \in \ALPHABET S} \frac{d_{\F}\Bigl(P_{\pi}\bigl(\cdot|s\bigr),\hat P_{\hat\pi}\bigl(\cdot|s\bigr)\Bigr)}{w(s)} \nonumber \\
     =& \varepsilon_{\boldsymbol{\alpha}}(\pi,\hat\pi) + \gamma \rho_{\F}(v) \delta_{\F}(\pi,\hat\pi)
\end{align}
\subsection{Proofs of part~2 and~3}
Part~2 follows because $\MISMATCH^\star_{\boldsymbol{\alpha}} \hat V^\star = \MISMATCH^{\mu^\star,\hat \pi^\star}\hat V^\star$. Similarly, part~3 follows because $\MISMATCH^\star_{\boldsymbol{\alpha}} (\alpha_1 V^\star) = \MISMATCH^{\pi^\star,\hat \mu^\star} (\alpha_1 V^\star)$.
\subsection{Proof of part 4}
Define
\begin{multline*}
    \Xi^{(s,a)}_{\boldsymbol{\alpha}}v = 
    \alpha_1 c(s,a) + \alpha_2 - \hat c(s,a) \\
     + \gamma \int_{\ALPHABET S} v(s') P(ds' | s,a) - \gamma \int_{\ALPHABET S} v(s')\hat P(ds'|s,a).
\end{multline*}

Then, we have
\begin{align}\label{eq:IPM_pf3}
         \MISMATCH_{\boldsymbol{\alpha}}^{\max}v &=
         \sup_{a \in  \ALPHABET A} \| \BELLMAN^{\pi_a}_{\boldsymbol{\alpha}}v - \hat{\BELLMAN}^{\pi_a}v \|_w \notag \\
         &=\sup_{(s,a) \in \ALPHABET S \times \ALPHABET A}\frac{\bigl| \Xi^{(s,a)}_{\boldsymbol{\alpha}}v \bigr|}{w(s)}
         \notag \\
    \displaybreak[1]
&\leq \sup_{(s,a) \in \ALPHABET S \times \ALPHABET A} \frac{\bigl|\alpha_1 c(s,a) +\alpha_2 - \hat c(s,a)\bigr|}{w(s)} \nonumber \\
&\quad + \gamma \sup_{(s,a) \in \ALPHABET S \times \ALPHABET A}\frac{\bigl| \int_{\ALPHABET S} v(s')[ P(ds' | s,a) - \hat P(ds'|s,a)]\bigr|}{w(s)} \nonumber \\
    \displaybreak[1]
&\leq \varepsilon_{\boldsymbol{\alpha}}^{\max} + \gamma \rho_{\F}(v)\sup_{(s,a) \in \ALPHABET S \times \ALPHABET A}\frac{d_{\F}\bigl(P(\cdot|s,a),\hat P(\cdot|s,a)\bigr)}{w(s)} \notag \\
&= \varepsilon_{\boldsymbol{\alpha}}^{\max} + \gamma \rho_{ \mathfrak F}(v)\delta_{\mathfrak F}^{\max}.
\end{align}

\begin{IEEEbiography}{Berk Bozkurt} (Student Member, IEEE) received his BSc degree in Electrical and Electronics Engineering from Bilkent University, Ankara, Turkey, in 2021. He is currently a MSc student in Electrical and Computer Engineering Department at McGill University, Montreal, Canada. His research interests include reinforcement learning, game theory, stochastic control and Markov decision theory.

\end{IEEEbiography}
\begin{IEEEbiography}{Aditya Mahajan} (Senior Member, IEEE) is Professor of Electrical and Computer Engineering at McGill University, Montreal, Canada. He received the B.Tech degree in Electrical Engineering from the Indian Institute of Technology, Kanpur, India  in 2003 and the MS and PhD degrees in Electrical Engineering and Computer Science from the University of Michigan, Ann Arbor, USA in 2006 and 2008, respectively.
He serves as Associate Editor of  Transactions on Automatic Control, Control Systems Letters, and Math. of Control, Signal, and Systems.  
\end{IEEEbiography}

\begin{IEEEbiography}{Ashutosh Nayyar} 
 (Senior Member, IEEE) is an Associate Professor of Electrical and Computer Engineering at the University of Southern California. He  received a M.S. degree in electrical engineering and computer science, a M.S. degree
in applied mathematics, and a Ph.D. degree
in electrical engineering and computer science
from the University of Michigan, Ann Arbor, MI,
USA, in 2008, 2011, and 2011, respectively.
 His research interests are in decentralized stochastic control,
decentralized decision-making in sensing and communication systems,
reinforcement learning, game theory and  mechanism design.
\end{IEEEbiography}

\begin{IEEEbiography}{Yi Ouyang} received the B.S. degree
in Electrical Engineering from the National Taiwan
University, Taipei, Taiwan in 2009, and the M.Sc
and Ph.D. in Electrical Engineering and Computer Science at the University
of Michigan, in 2012 and 2015, respectively. He is
currently a researcher at Preferred Networks, Burlingame, CA. His research interests include reinforcement
learning, stochastic control, and stochastic dynamic
games.
\end{IEEEbiography}

\vfill

\end{document}